\newcommand{\mbb}{\mathbb}
\newcommand{\mbf}{\mathbf}
\newcommand{\mcl}{\mathcal}
\newcommand{\bs}{\boldsymbol}
\newcommand{\f}{\frac}
\newcommand{\T}{\textnormal}
\newcommand{\x}{\mathbf{x}}
\newcommand{\X}{\bm{\mathcal{X}}}
\newcommand{\D}{\bm{\mathcal{D}}}
\newcommand{\Sc}{\bm{\mathcal{S}}}
\newcommand{\upsilona}[2]{\upsilon_{#1,#2}}
\newcommand{\defeq}{:=}
\newcommand{\bI}{\pmb{\mbb{I}}}
\newcommand{\E}{\mbb{E}}
\newcommand{\pfmc}{p_\mcl{F}^{\T{MC}}}
\newcommand{\pf}{p_\mcl{F}}
\newcommand{\pfis}{p_\mcl{F}^\T{IS}}
\newcommand{\gx}{\bs{\gamma}_{\x}}
\newcommand{\gs}{\bs{\gamma}_{s}}
\newcommand{\tC}{\tilde{\mcl{C}}}
\newcommand{\C}{\mcl{C}}
\newcommand{\mP}{\mbb{P}r}
\newcommand{\bw}{\bs{\omega}}
\newcommand{\tx}{\tilde{\x}}
\newcommand{\mg}{\textcolor{black}}
\newcommand{\argmax}{\operatornamewithlimits{arg\,max}}
\newtheorem{lemma}{Lemma}
\newtheorem{remark}{Remark}
\newtheorem{theorem}{Theorem}
\newtheorem{assumption}{Assumption}
\title{\texttt{CAMERA}: A Method for Cost-aware, Adaptive, Multifidelity, Efficient Reliability Analysis}
\begin{document}

\begin{frontmatter}

\author{S. Ashwin Renganathan}
\address{{The University of Utah},
            {1495 E 100 S}, 
            {Salt Lake City},
            {84121}, 
            {UT},
            {USA}}
\author{Vishwas Rao}
\address{{Argonne National Laboratory},
             {9700 S Cass Ave}, 
             {Lemont},
             {60439}, 
             {IL},
             {USA}}
\author{Ionel M. Navon}
\address{{Florida State University},
            {600 W College Ave}, 
            {Tallahassee},
            {32306}, 
            {FL},
            {USA}}
            
\begin{abstract}
Estimating probability of failure in aerospace systems is a critical requirement for flight certification and qualification. Failure probability estimation involves resolving tails of probability distribution, and Monte Carlo sampling methods are intractable when expensive high-fidelity simulations have to be queried. We propose a method to use models of multiple fidelities that trade accuracy for computational efficiency. Specifically, we propose the use of multifidelity Gaussian process models to efficiently fuse models at multiple fidelity, thereby offering a cheap surrogate model that emulates the original model at all fidelities. Furthermore, we propose a novel sequential \emph{acquisition function}-based experiment design framework that can automatically select samples from appropriate fidelity models to make predictions about quantities of interest in the highest fidelity. We use our proposed approach in an importance sampling setting and demonstrate our method on the failure level set and probability estimation on synthetic test functions and two real-world applications, namely, the reliability analysis of a gas turbine engine blade using a finite element method and a transonic aerodynamic wing test case using Reynolds-averaged Navier-Stokes equations. We demonstrate that our method predicts the failure boundary and probability more accurately and computationally efficiently while using varying fidelity models compared with using just a single expensive high-fidelity  model.
\end{abstract}



\begin{keyword}
Gaussian process regression \sep sequential experiment design \sep reliability analysis \sep aircraft design and certification



\end{keyword}

\end{frontmatter}


\section{Introduction}
Flight certification by analysis is a new paradigm in which airworthiness certificates for aircraft and handling qualities compliance for aircraft subsystems can be obtained via calculations with quantifiable accuracy.\footnote{See \S 25.21 of Chapter 1, Title 14 in the electronic Code for Regulations  of the Federal Aviation Administrations.} This opens up possibilities for leveraging physics-based high-fidelity models in aircraft design, certification, and qualification. However, the high computational costs associated with high-fidelity models necessitates the development of advanced mathematical methods that judiciously spend computational resources on problems where such models may have to be queried many ($\geq \mcl{O}(10^2) - \mcl{O}(10^6)$) times.

One such problem we are interested in  is reliability analysis~\cite{rackwitz2001reliability}. Specifically, we want to estimate probabilities of failure in aerospace subsystems, whose performance are governed by models that can be queried by perturbing some control parameter. Rigorous quantification of the probability of subsystem failure is a critical requirement for flight certification. Failure probability estimation (FPE) involves accurately resolving failure boundaries (contours or level sets) in the design space and tails of probability distributions. The popular Monte Carlo (MC)~\cite{hammersley2013monte} sampling-based methods are intractable when expensive high-fidelity simulations have to be queried. Furthermore, the high dimensionality of the control parameters exacerbates the tractability issue. In this work we focus on a method that circumvents direct MC and instead seeks to develop a surrogate model of the mapping between the input parameters and the quantity of interest.  Our hope is that a surrogate model can be constructed with far fewer queries to the expensive high-fidelity model, following which we use conventional variance-reducing sampling-based techniques for FPE.

Computational models typically include one or more customizable \emph{fidelity} parameters that allow trading model accuracy for a gain in computational speedup. For example, the exact Navier--Stokes equations simulating fluid flows require numerical mesh resolutions on the order of the fine Kolmogorov scale, which shares an inverse nonlinear power relationship to the flow Reynolds number~\cite{tennekes2018first}, resulting in a large number of degrees of freedom that in turn makes the computational cost of evaluating such models intractable. However, filtering out the smaller scales, leading to simplified models such as  Reynolds-averaged Navier--Stokes  and large eddy simulations, significantly reduces computational cost for a (typically acceptable) loss of accuracy. In general, physics-based models governing conservation laws can be computationally simplified by using coarse-grained mesh resolutions and/or fewer solver iterations for convergence (in the iterative solver) to achieve a similar effect. We exploit the availability of models with tunable fidelities and propose multifidelity methods~\cite{takeno2019multi, kandasamy2017multi, klein2017fast, swersky2013multi} that can judiciously utilize the cost-accuracy trade-off in models to estimate rare-event probabilities in aerospace design.

Surrogate models for conservation laws can be efficiently constructed via reduced-order models~\cite{renganathan2018koopman, renganathan2020koopman, renganathan2020machine,renganathan2018methodology} for state variables and general regression/interpolation techniques~\cite{renganathan2021lookahead, renganathan2021enhanced, rajaram2020deep, rajaram2021empirical} for scalar quantities of interest (QoI).
In this work we are interested in a setting where a data-driven surrogate model for the QoI is adaptively constructed; that is, starting with an initial seed sample set (observations of the QoI), the surrogate model is sequentially updated with new samples that are optimally chosen to improve the prediction of the the failure boundaries in the design space. Furthermore, we account for the fact that our model of the subsystem has tunable fidelity parameters that trade predictive accuracy for computational cost. Therefore, our framework develops a surrogate model that is cost-aware while learning the relationships between the predictions between any two given fidelities. Additionally, the adaptive model construction chooses the appropriate control parameter and fidelity level to query at each step of the model-building process. For these reasons and since we are interested in reliability analysis with scalar QoI in this work, we use a Gaussian process (GP)~\cite{rasmussen:williams:2006} regression model to learn the mapping between the QoI and the augmented input-fidelity space.


The use of surrogate models for FPE is not new. \citet{li2010evaluation} show that a hybrid approach of using both surrogate models and high-fidelity function evaluations can lead to computationally cheap failure probability estimates. Moreover, they show that if the chosen surrogate model is of known accuracy,  their proposed approach provides a consistent failure probability estimate (i.e., approaching the true function in the limit). Extending this work, \citet{li2011efficient} show that the same hybrid approach can be used with importance sampling \mg{and} a cross-entropy method to estimate biasing distributions. However, neither \cite{li2010evaluation} nor \cite{li2011efficient} addresses the choice of high-fidelity model queries necessary to construct the surrogate model (that is, they do not actively train their surrogate model); furthermore they do not consider the availability of multiple-fidelity models. \citet{peherstorfer2016multifidelity} demonstrate the idea of using a surrogate model to construct biasing distributions for importance sampling (IS), but that does not include details on adaptively constructing the surrogate model. 

In the context of using GP regression as surrogate models,  a large body of  work exists, of which we briefly review what we believe are most relevant to our work. Seminal work on using adaptive GP models for \mg{contour finding} and FPE was performed by \citet{ranjan2008sequential}, who use \citet{jones1998efficient}'s expected improvement  framework to propose an improvement function that seeks to place points close (within a user-specified distance) to the target level set. They show that their acquisition function efficiently balances exploration and exploitation, and  they prove that their approach asymptotically predicts the contour exactly. \citet{bichon2008efficient} then modify the formulation of \cite{ranjan2008sequential}; both \cite{ranjan2008sequential} and \cite{bichon2008efficient} have essentially the same meaning. As we will show later, our framework uses the acquisition function from \cite{bichon2008efficient} (and hence indirectly \cite{ranjan2008sequential})---which we consider single-fidelity acquisition functions---to develop multifidelity extensions. Indeed, our acquisition function can extend those beyond \cite{bichon2008efficient} and  \cite{ranjan2008sequential} to the multifidelity setting. 

 Other noteworthy works that propose an acquisition function for adaptive GP construction are as follows. \citet{bect2012sequential} propose a lookahead acquisition function using GP models, called  stepwise uncertainty reduction. \citet{echard2010kriging} use a sequential sampling strategy that greedily (myopically) seeks points with high probability of being in the failure domain. 
\citet{picheny2010adaptive} propose a weighted integrated mean-squared prediction error criterion, where the weight function is designed to be very high in regions where the  GP mean is close to the target level set with high certainty, or when the GP uncertainty is high. \citet{gotovos2013active} use GP to construct an acquisition function that picks the smallest distance to the target level set from one of the confidence bounds. They leverage the theoretical results of GP-\mg{upper confidence bound} (UCB)~\cite{srinivas2009gaussian} to derive a bound on the number of samples required to achieve a certain accuracy in resolving the level set. \citet{dubourg2013metamodel} use an adaptive Kriging surrogate model along with importance sampling to construct a quasi-optimal biasing distribution. Similar to \cite{li2010evaluation}, they propose a hybrid approach for FPE that combines the use of both the true limit state function and its (cheap) surrogate prediction. \citet{wang2016gaussian} use GP to approximate the forward model. Then they use Bayesian experimental design to sequentially add points toward the contour estimation. They use \mg{Simultaneous perturbation stochastic approximation} (SPSA)~\cite{spall2005introduction} to appropriately optimize their stochastic acquisition function. More recently, \citet{marques2018contour} develop an entropy-based acquisition function that seeks to reduce the entropy of the failure contour. Furthermore, their method (CLoVER) allows for multiple information sources with varying costs of querying, and their acquisition function allows them to choose the information source. Their method, however, involves an integral in the input-dimension space, which can become intractable in higher dimensions. 
Recently, \citet{cole2021entropy} improve upon \cite{marques2018contour} by developing a closed-form entropy expression for the contour that is (relatively) easier to optimize. While their method does not naturally include accounting for multiple information sources, they show that they outperform CLoVER on several problems. 

Our work differs from the aforementioned work that use GP models in the following way. We propose the use of multifidelity GP~\cite{rasmussen:williams:2006} models to efficiently fuse models at multiple fidelity, where the fidelity space can be continuous or discrete. Even though \cite{marques2018contour} allows for multifidelity models, their approach depends on a Kennedy--O'Hagan~\cite{kennedy2001bayesian}-like framework (see also \cite{poloczek2017multi}), where they learn a linear correction factor between each fidelity level and the highest-fidelity model. This has two drawbacks:  (i) when the fidelity space is continuous, the approach can become intractable, and (ii) even for discrete fidelity spaces, they have to query each fidelity at a regular grid to learn the discrepancies. Our approach does not face either of these limitations. The method proposed in \cite{kramer2019multifidelity} involves the fusion of unbiased estimators from multiple fidelity models; however, the authors do not consider the choice of input and fidelities to query for an adaptive construction of the surrogate model,  as we do. An adaptive construction of the surrogate model potentially results in a more judicious choice of input-fidelity pairs to query and thus a more parsimonious use of the computational resources.
Furthermore, as stated previously, we propose a novel \emph{acquisition function} that can provide a multifidelity extension to any single-fidelity acquisition function\footnote{although ones with a closed-form expressions suit us better}, for example, \cite{ranjan2008sequential, bichon2008efficient}. Our acquisition function optimization allows for sequentially selecting samples from appropriate fidelity models to make predictions about QoI in the highest fidelity. We use our proposed approach in an importance sampling setting, similar to \cite{peherstorfer2016multifidelity}, where biasing distributions are constructed with the cheap multifidelity surrogate model, to predict failure probabilities. We demonstrate our method on several synthetic test functions as well as two real-world experiments, namely, the static stress analysis of a gas turbine blade and the transonic flow past an aircraft wing. Our main contributions are  summarized as follows:
\begin{enumerate}
    \item Introduction of a generalized multifidelity acquisition framework that extends existing single-fidelity acquisition functions in GP-based approaches for reliability analysis.
    \item Introduction of a framework that applies to both discrete and continuous fidelity spaces.
    \item Theoretical demonstration that our estimates are unbiased and our surrogate model is asymptotically consistent.
    \item Introduction of novel multifidelity synthetic test functions that can be leveraged to benchmark multifidelity methods in general.
    \item Demonstration of our method on synthetic as well as real-world application problems.
\end{enumerate}

The remainder of the article is organized as follows. In \Cref{sec:method} we provide the background and preliminaries including details about FPE and multifidelity GP models. In \Cref{sec:proposed_method} we provide details of our method and discuss key theoretical properties in \Cref{s:theory}. In \Cref{sec:num_expts} we show results of our method demonstrated on the synthetic test functions and a real-world stress analysis of a gas turbine blade.
We provide concluding remarks in \Cref{sec:conclusion}.

\section{Background and Preliminaries}
\label{sec:method}
Let $\x \in \X \subset \mbb{R}^d$ be the uncertain input to the function $f: \X \rightarrow \mbb{R}$. We assume that evaluations of $f(\x)$ are computationally expensive. We define the \emph{limit state function} $g:\mbb{R} \rightarrow \mbb{R}$ that operates on $f(\x)$; and we define failure to occur, without loss of generality, when $g(f(\x)) < 0$. The failure level set is then defined as
\begin{equation}
    \mcl{C} = \{\x \in \X: g(f(\x))=0 \}.
\end{equation}
For example, for a given failure threshold $a \in \mbb{R}$, $g(\x) = f(\x) - a$.
\begin{remark}
We restrict our analysis to affine limit state functions of the form $g(\x) = \rho f(\x) - a,~ \rho \in \mbb{R},~a \in \mbb{R}$, in order to preserve the Gaussianity of their sample paths, when we place a GP prior on $f$. This is not a restrictive assumption, however, because for nonaffine limit state functions we simply place a GP prior on $g(\x)$ instead of $f(\x)$.
\end{remark}

\subsection{Failure probability estimation}
Consider the probability space $(\mathfrak{S} , \mathfrak{F}, \mathbb{P}) = (\X, \mathfrak{B}(\X), \mathbb{P})$, where $\mathfrak{S}$ denotes the sample space that in our setting is $\X \subset \mathbb{R}^d$;  $\mathfrak{B}(\X)$ denotes the Borel sets in $\X$; and $\mathbb{P}$ denotes a probability measure. We model the uncertain parameter $\x$ as a random variable in $\mathbb{R}^d$ and  assume that $\x$ is distributed according to the distribution $\mbb{P}_\x$, which has density $q_\x$. 
The failure hypersurface defined by $\mcl{C}$ divides the parameter space into a failure set $\mathcal{F} = \{\x: g(f(\x)) \leq 0\}$ and safe set $\mcl{F}' \defeq \X \backslash \mcl{F}$. We are interested in estimating the probability of failure $p_{\mathcal{F}}$ that is defined by
\begin{align}\label{eqn:defintion}
    p_{\mathcal{F}} = \displaystyle \int_{\x \in \mcl{F}} d\mbb{P}_\x = \int_{\x \in \X} \mbb{I}_{\{\x \in \mcl{F}\}} d\mbb{P}_\x = \int_{\mcl{X}} \mathbb{I}_{\{\x \in \mcl{F}\}} q_\x(\x)~d\x,
\end{align}
where $\mbb{I}_{\{\x \in \mcl{F}\}}$ denotes the indicator function that takes the value $1$ when $\x \in \mcl{F}$ and $0$ otherwise. MC methods are often used to estimate $ p_{\mathcal{F}}$; the idea is to draw $N$ independent and identically distributed (iid) samples of $\x$, $\{\x^i,~i=1,\ldots,N\}$, $\x^i \sim \mbb{P}_\x$ and approximate  $p_{\mathcal{F}}$ as
\begin{align}\label{eqn:mcs}
    p_\mcl{F} \approx p_{\mathcal{F}}^{\textrm{MC}} = \frac{1}{N}\sum_{i=1}^{N}  \mathbb{I}_{\{\x^i \in \mcl{F}\}}\,,
\end{align}
where ${\x}^i$ is the $i$th realization of $\x$. The MC estimator is unbiased, as shown below.
\begin{equation*}
    \begin{split}
        \E [\pfmc] =& \f{1}{N}\sum_{i=1}^N \E[\mathbb{I}_{\{\x^i \in \mcl{F}\}}]\\
        =& \f{1}{N}\sum_{i=1}^N \int_{\mcl{X}} \mathbb{I}_{\{\x \in \mcl{F}\}} q_\x(\x)~d\x = \f{1}{N} \sum_{i=1}^N \pf = \pf.
    \end{split}
\end{equation*}
The variance of the MC estimator is given by
\[\mbb{V}(p^{MC}_\mcl{F}) = \f{p^{MC}_\mcl{F} (1 - p^{MC}_\mcl{F})}{N}.\]
Using the approach described by \eqref{eqn:mcs} requires a large number of samples in order to obtain a failure probability estimate of desired accuracy. For instance, consider $p_\mcl{F} = 10^{-3}$ and a root mean-squared error in the estimate of $10^{-4}$ (and therefore a variance of $10^{-8}$). Then the number of samples necessary to achieve this accuracy via MC is $\approx 10^5$. This could be prohibitively expensive in the reliability analysis of complex engineered systems with expensive forward models. A common means to (partially) circumvent this shortcoming is to employ an auxiliary distribution that concentrates the samples around the failure region \cite{kahn1953methods, srinivasan2002importance}; this approach is referred to as importance sampling. Equation \eqref{eqn:defintion} can be equivalently written as 
\begin{equation}
    p_{\mathcal{F}} = \displaystyle \int_{\x \in \X} \mbb{I}_{\{\x \in \mcl{F}\}} \f{q_\x(\x)}{q'_\x(\x)} q'_\x(\x) d\x = \mathbb{E}_{\mbb{P}'_\x}\left\lbrack \mathbb{I}_{\{\x \in \mcl{F}\}} \f{q_\x(\x)}{q'_\x(\x)} \right\rbrack\,
    \label{e:is}
\end{equation}
where $q'_{\x}$ is the density of the biasing distribution $\mbb{P}_\x'$. The key idea is that samples drawn from the biasing distribution are more likely to be from the set $\mathcal{F}$. From \eqref{e:is}, the IS estimate of $p_{\mathcal{F}}$ can be written as 
\begin{align}\label{eqn:isexpression}
    p_{\mathcal{F}}^{\textrm{IS}} = \frac{1}{N} \sum_{i=1}^N \mathbb{I}_{\{ \x^i \in \mathcal{F} \}} w(\x^i)\,,
\end{align}
where $w(\x^i) = \f{q_\x(\x^i)}{q'_\x(\x^i)}$
are the importance weights and $\x^i \sim \mbb{P}'_\x$. The estimator in \eqref{eqn:isexpression} is also unbiased, provided that $q'_{\x}(\x) > 0$ whenever $\mbb{I}_{\{\x \in \mcl{F}\}} \times q_\x(\x) > 0$. Let $\mcl{Q} \subset \X \defeq \{\x \in \X : \mbb{I}_{\{\x \in \mcl{F}\}} \times q_\x(\x) > 0\}$. Then

\begin{equation*}
    \begin{split}
        \E [\pfis] =& \f{1}{N}\sum_{i=1}^N \E[\mathbb{I}_{\{\x^i \in \mcl{F}\}} w(\x^i)]\\
        =& \f{1}{N}\sum_{i=1}^N \int_{\mcl{X}} \mathbb{I}_{\{\x \in \mcl{F}\}} \f{q_\x(\x)}{q'_\x(\x)} q'_\x(\x)~d\x = \f{1}{N}\sum_{i=1}^N \int_{\mcl{Q}} \mathbb{I}_{\{\x \in \mcl{F}\}} \f{q_\x(\x)}{q'_\x(\x)} q'_\x(\x)~d\x\\
        =& \f{1}{N} \sum_{i=1}^N \pf = \pf.
    \end{split}
\end{equation*}
As mentioned previously, we will construct the biasing distribution via a surrogate model in order to keep the computational costs tractable. As we will show later, similar to the construction of our surrogate model, our biasing distribution is also adaptively improved. We  now introduce the surrogate modeling technique via GP regression.

\subsection{Multifidelity Gaussian process regression}
\label{subsec:gp}
Crucially, we assume that $f(\x)$ is approximated by models $\hat{f}(\x, s)$, where $\x$ are common inputs to all the models and $s \in \Sc\subset \mbb{R}$ is a tunable \emph{fidelity} parameter for each model. For the sake of simplicity, in this work we set $s$ to be scalar valued and without loss of generality $\Sc= [0,1]$, where $s=1$ and $s=0$ represent the model at the highest and lowest fidelity, respectively; therefore,
\[ \hat{f}(\x, 1) \defeq f(\x).\]
Additionally, we assume there is known a cost function $c(s): \Sc \rightarrow \mbb{R}$, which is monotonic increasing in $s$ and models the computational cost of querying $\hat{f}$ at a specific fidelity. Note that we assume $c(\cdot)$ is independent of $\x$ for the sake of simplicity, but our framework applies to more general  $c(\x,s)$ as well. Overall, we are interested in computing \eqref{e:is} by querying $\hat{f}$, while keeping the overall cost lower than computing \eqref{e:is} by querying $f(\x)$ alone. Our multifidelity method depends on learning 
a GP model that maps the augmented input-fidelity space $(\x, s) \in \X \times \Sc$ to output quantities of interest $\hat{f}(\x, s)$. 
\begin{remark}
We mention that by assuming a monotonic cost function $c(s)$ and the fact that fidelity increases as $s$ increases in $[0,1]$, we assume that a hierarchy of fidelities exists among the set of models. However, \mg{such a hierarchy is not a necessary condition for} our method \mg{to be applicable}. In the absence of any known hierarchy among the models, the highest fidelity can be chosen as $\argmax_{s \in \Sc}~c(s)$. If $c(s)$ is unknown a priori, it can be learned, for example,  by placing another GP prior on it and making a few observations.
\end{remark}

We assume that we can make evaluations of the function $\hat{f}$ at a given $\x$ and fidelity level $s$. That is,
\begin{equation}
    \hat{y}_i = \hat{f}(\x_i, s_i), \quad i= 1,\ldots,n.
\end{equation}
The key idea then is to specify GP prior distributions on the
$f$. That is, $\hat{f}(\x, s) \sim \mcl{GP}\left(0, k( (\x, s),
(\x', s'))\right)$. The covariance function (or \emph{kernel}) $k$ captures the correlation between the observations in the joint $(\x, s)$ space; here we use the product composite form given by $k((\x, s), (\x', s')) = k_{\x}(\x,\x'; \gx) \times k_s(s,s'; \gs) $, where $\gx \in \mbb{R}_+^d$ and $\gs \in \mbb{R}_+^d$ parameterize the covariance functions for $\x$ and $s$, respectively. We estimate the
GP hyperparameters 
$\bs{\Omega}= \lbrace \gx, \gs \rbrace$
from data by maximizing the marginal
likelihood. We choose an anisotropic Mat\'{e}rn-type kernel for both $k_\x$ and $k_s$. The
posterior predictive distribution of the output $Y$, conditioned on available observations from the oracle, is given by~\cite{rasmussen:williams:2006}
\begin{equation}
    \begin{split}
        Y(\x, s) |
      \D_n, \bm{\Omega} &\sim \mcl{GP}(\mu_n(\x, s), \sigma^2_n (\x, s)),~ \quad \D_n \defeq  \lbrace
(\x_i, s_i), \hat{y}_i \rbrace _{i=1}^n \\
\mu_n(\x, s) &= \mbf{k}_n^\top [\mbf{K}_n]^{-1} \mbf{y}_n\\
\sigma^2_n(\x, s) &=  k((\x, s), (\x, s)) - \mbf{k}_n^\top [\mbf{K}_n]^{-1} \mbf{k}_n,
    \end{split}
    \label{e:GP}
\end{equation}
where $\mbf{k}_n$ is a vector of covariance between $(\x, s)$ and all observed
points in $\D_n$; $\mbf{K}_n$ is a sample covariance matrix of observed points in
$\D_n$; $\mbf{I}$ is the identity matrix; and $\mbf{y}_n$ is the vector of all
observations in $\D_n$. Equation \eqref{e:GP} is then used as a surrogate model for
$f$ in FPE. Note that $\mu_n$ and $\sigma_n^2$
are the posterior mean and variance of the GP, respectively, where the
subscript $n$ implies the conditioning based on $n$ past observations. Our surrogate model for the limit state function then is given as
\[ \tilde{g}_n(\x, s) \defeq g(\mu_n(\x, s)). \]
\begin{remark}
Note that we assume our models at all fidelities are deterministic and observations are noise-free. Estimating failure probabilities with noisy observations is an interesting problem, but outside the scope of our work.
\end{remark}
\begin{remark}
For the remainder of this manuscript, we assume that the high fidelity model $\hat{f}(\x, 1) = f(\x)$ is our ground truth. On the other hand, our approach naturally extends to situations involving disparate information sources, where the ground truth could be a physical experiment.
\end{remark}
An important component of our proposed method is that the data required to build the surrogate model is adaptively selected. This ensures that the inputs and fidelities are judiciously chosen to be tailored to our specific goals. We call this adaptive approach ``sequential experimental design," which we discuss next.

\subsection{Acquisition-based sequential experiment design}
We use principles from Bayesian optimization (BO)~\cite{shahriari2015taking, frazier2018tutorial, snoek2012practical} to sequentially update the GP to improve our knowledge about our goal---in this case, estimation of $\mcl{C}$ and the subsequent FPE. Akin to BO, we  proceed by defining an acquisition function $\alpha(\x,s)$, in terms of the GP posterior that is
optimized (typically maximized) to select---given previous $n$ observations---the next point $\x_{n+1}$ at $s_{n+1}$, and the process continues until a budget for computation is reached; see Algorithm~\ref{a:BO}. The choice of $\alpha(\x, s)$ is key to the performance of the quality of the sequential point selection, which we discuss in the next section. 

Naturally, the acquisition function is designed according to the specific needs of the sequential process. Classical acquisition functions for optimization include the
probability of improvement (\texttt{PI})~\cite{kushner1964new}, the expected improvement
(\texttt{EI}) \cite{jones1998efficient, mockus1978application}, and the GP upper
confidence bound (\texttt{UCB})~\cite{srinivas2009gaussian}; these approaches work with a probabilistic estimate of the optimum via the GP. Other acquisition functions for optimization include entropy-based approaches~\cite{wang2017max, hennig2012entropy, villemonteix2009informational}. Similarly, acquisition functions for active learning (purely learning the function $f(\x)$) also exist~\cite{mackay1992information, binois2019replication}. Therefore,  acquisition functions to learn a level set (e.g., failure boundary) can also be defined, such as the expected level set improvement of \citet{ranjan2008sequential} and \citet{bichon2008efficient}, entropy-based approaches of \citet{cole2021entropy} and \citet{marques2018contour}, stepwise uncertainty reduction by ~\citet{bect2012sequential}, and other methods as in \citet{picheny2010adaptive}. 

In all the aforementioned approaches for failure level set estimation, we point out that application to multifidelity models is not straightforward. Although \mg{in} \cite{marques2018contour} \mg{the authors} show their applicability with multiple information sources, that requires learning individual discrepancies between models at each fidelity and the highest fidelity model. This can be computationally  demanding in high-dimensional problems or when some of the lower-fidelity models are not necessarily  cheap (compared with the highest-fidelity models). Consequently, such methods are restricted to discrete fidelity spaces with small cardinality. Furthermore, outside of the reliability literature, autoregressive approaches such as that of \cite{kennedy2001bayesian} and \cite{perdikaris2017nonlinear} suffer from the same limitation.
Therefore, in this work we propose a new acquisition approach that applies to both discrete and continuous fidelity spaces. Additionally, our approach can extend any existing acquisition function to the multifidelity setting, as we will show later. 
We  provide more details in the following section.

\begin{algorithm2e}[t]
\textbf{Given:} $\D_n = \lbrace
(\x_i, s_i), \hat{y}_i \rbrace _{i=1}^n$, 
 total budget $q$, 
 and GP hyperparameters $\bm{\Omega}$ \\
\KwResult{$(\x_q, \hat{y}_q)$}
  \For{$i=n+1, \ldots, q$, }{
  Find $\x_i, s_i \in \underset{(\x, s) \in \X \times \mcl{S}}{\argmax}~ \alpha(\x, s)$ \qquad (acquisition function maximization)\\
    Observe $\hat{y}_i$ = $f(\x_i, s_i)$\\ 
    Append $\D_i = \D_{i-1} \cup \lbrace (\x_i, s_i), \hat{y}_i \rbrace$\\ 
    Update GP hyperparameters $\bm{\Omega}$ \\
 }
 \caption{Generic acquisition-based sequential design}
 \label{a:BO}
\end{algorithm2e}

\section{Proposed Method}
\label{sec:proposed_method}

\subsection{Multifidelity acquisition functions}
\label{subsec:lookahead}
Drawing from the work of \citet{renganathan2021lookahead}, where they propose an acquisition function for time-dependent problems with a finite horizon, we define a multifidelity \emph{value} function framework that we use to develop our multifidelity acquisition function. Given the data from $n$ observations $\D_n$, we define the value function $\upsilon$ by 
\begin{equation}
    \upsilon_n(\x, s; \bs{\omega}) \defeq \mbb{E}_{y\sim Y|\D_n}\left[
    h(y(\x, s); \bs{\omega})     
    \right],
    \label{e:value_function}
\end{equation}
where $h$ is a scalar-valued function, $\bs{\omega} \in \mbb{R}^p$ parametrizes $h$, 
$Y|\D_n$ is the GP posterior 
given $n$ observations, and $y(\x, s)$ is a sample path realized from $Y|\D_n$. The reasoning behind defining a value function  as in \eqref{e:value_function} is as follows: 
\begin{enumerate}
    \item we can quantify the utility/value at the highest fidelity (our ground truth) by simply setting $s=1$, i.e. 
    \[ \upsilon_n(\x, 1; \bw) : \hfil \T{utility at the highest fidelity}  \]
    \item computing this utility is cheap because it depends only on the posterior \mg{multifidelity} GP
    \item \eqref{e:value_function} generalizes several of the existing (single-fidelity) acquisition functions in GP-based reliability analysis, and finally
    \item as we will show below, \eqref{e:value_function} allows a multifidelity extension of any given single fidelity acquisition function.
\end{enumerate}
For example, some of the existing single fidelity acquisition functions for GP-based reliability analysis can be expressed as special cases of \eqref{e:value_function} as shown in \Cref{tab:existing_acq},
\begin{table}[h!]
    \centering
    \begin{tabular}{c|c|c|c}
    \hline
        $\bs{\omega}$ & $h(\x, s)$ & $v_n(\x, s)$ & acquisition function  \\
        \hline
        $\{a, \eta\}$ & $\delta ^2(\x, s) - \min\{(y(\x, s) - a)^2, \delta^2(\x, s)\}$ & $EI_r(\x, s)$ & \citet{ranjan2008sequential}, see \eqref{e:ranjan}. \\
        $\{a, \eta\}$ & $\delta(\x, s) - \min\{| (y(\x, s) - a)|, \delta(\x, s)\}$ & $EI_b(\x, s)$ &  \citet{bichon2008efficient}, see \eqref{e:Exp_RandR}. \\
        \hline
    \end{tabular}
    \caption{Parametric representation of existing acquisition functions via our framework \eqref{e:value_function}.}
    \label{tab:existing_acq}
\end{table}
where $a$ is the failure threshold and $\delta^2(\x) = \eta \sigma^2(\x),~ \eta \in \mbb{R}_+$ defines the width of the uncertainty band around the GP posterior mean prediction. Then, the generalized multifidelity acquisition function is defined as 
\begin{equation}
\begin{array}{rl}
\alpha(\x,s)
&=  \displaystyle \int \max_{\x' \in \X}~ \left[ \underbrace{\upsilon(\x', 1; \bs{\omega})}_{\T{value at highest fid.}} | \, 
\underbrace{\D_n}_{\T{prev. obs.}} \bigcup \{ \underbrace{(\x,s), y}_{\T{candidate obs.}} \}
\right]~ q_y(y|\D_{n}) ~dy, 
\end{array}
\label{e:twostep}
\end{equation}
where $q_y(y|\D_{n})$ is the probability density of the GP posterior distribution conditioned on $\D_n$. Notice that in the inner maximization we use $\upsilon(\cdot, 1)$;  this indicates that we place our value on the highest fidelity. In essence, \eqref{e:twostep} computes the expected value of the maximum value function at the highest fidelity due to a candidate input-fidelity pair $(\x, s)$, where the expectation is with respect to the posterior GP. We write \eqref{e:twostep}  more concisely as 
\begin{equation}
  \alpha(\x,s) = \mbb{E}_n \left[\max_{\x' \in \X}~\upsilona{n}{1}(\x',1;\bs{\omega}) \right].
    \label{e:E_twostep}
\end{equation}
In \eqref{e:E_twostep}, note that $\upsilona{n}{1} = \upsilon | \D_n \bigcup \{ (\x,s), y \}$, which is the value function given the union of $\D_n$ and the $n+1$th observation beinga drawn from the GP posterior $Y|\D_n$. In general, 
\eqref{e:E_twostep} is not guaranteed to admit a closed-form expression. Hence we make the
MC approximation
\begin{equation}
    \alpha(\x, s) \approx \hat{\alpha}(\x, s) \defeq \frac{1}{N} \sum_{j=1}^N \max_{\x' \in \X}~\upsilona{n}{1}^j(\x', 1),
    \label{e:twostep_mc}
\end{equation}
where $\upsilona{n}{1}^{j}(\x', 1) = \upsilon(\x', 1)|\D_n \bigcup \{(\x, s), y^j \}$ with the $N$ iid samples $\{y^j, \forall \,j=1,\ldots,N\}$ drawn from $Y|\mcl{D}_{n}$ at $(\x, s)$. 

The inner maximization in \eqref{e:twostep_mc} can be approximated by evaluating the value function $\upsilona{n}{1}$ on a discrete set $\hat{\X}$ and the maximization approximated as (for $\hat{\x}_i \in \hat{\X}$)
\[ \max_{\x' \in \X}~\upsilona{n}{1}^j(\x', 1) \approx \max_i ~\upsilona{n}{1}^j(\hat{\x}_i, 1).\]
As noted above, the value function is a function of the GP only, and hence its evaluation on $\hat{\X}$ is computationally  cheap. 
We now sequentially choose points by maximizing the cost-normalized acquisition function defined as
\begin{equation}
    (\x_{n+1}, s_{n+1}) = \argmax_{(\x,s) \in \X \times \Sc}~\f{ \hat{\alpha}_2(\x, s) }{c(s)}.
    \label{e:costaware}
\end{equation}
In \eqref{e:costaware}, since we normalize the acquisition function by the cost of evaluations $c(s)$, the optimization penalizes decisions at high $s$, preventing expensive high-fidelity evaluations unless they offer a significant increase in the maximum value function compared with lower fidelities. For this reason, we consider our approach \emph{cost-aware} and thus name it \texttt{CAMERA}: cost-aware, adaptive, multifidelity, efficient reliability analysis. We now discuss the choice of the value function. 

\subsection{Choice of value function}
As previously mentioned, we propose a multifidelity acquisition function that provides a framework to \mg{extend} existing acquisition functions, defined for a single-fidelity setting, \mg{toward} a multifidelity setting. Therefore, while the choice of the value function can be practically any existing acquisition function, for the experiments in this work we choose the \emph{improvement}-based acquisition functions for contour estimation by \cite{ranjan2008sequential} and \cite{bichon2008efficient}. \citet{ranjan2008sequential} propose the following improvement function, which we extrapolate to the multifidelity setting as
\begin{equation}
    I_r(\x, s) = \delta ^2(\x, s) - \min\{(y(\x, s) - a)^2, \delta^2(\x, s)\},
    \label{e:ranjan}
\end{equation}
where  $a$ is the target level and $\delta^2(\x, s) = \eta \sigma^2(\x, s),~ \eta \in \mbb{R}_+$ defines the width of the uncertainty band around the GP posterior mean prediction. The intuition behind \eqref{e:ranjan} is that improvement is defined as getting points closer to the target level set or in places where the uncertainty of the GP predictions is large, while still being within a certain \emph{confidence region} of the GP prediction. Consequently, $I_r(\x) = 0$, when $(y(\x) -a)^2 > \epsilon^2(\x)$ and therefore regions far away from the potential contour locations are avoided. \citet{bichon2008efficient} modify \eqref{e:ranjan} as 
\begin{equation}
    I_b(\x, s) = \delta(\x, s) - \min\{| (y(\x, s) - a)|, \delta(\x, s)\},
    \label{e:bichon}
\end{equation}
which results in a closed-form expression for the expected improvement
\begin{equation}
\begin{split}
    EI_b(\x, s) = &\delta(\x, s) \left[ \Phi(z^+) - \Phi(z^-) \right] - \sigma(\x, s)  \left[ 2\phi(z) - \phi(z^-) - \phi(z^+) \right] \\
    & + (\mu(\x) - a) \left[ 2\Phi(z) - \Phi(z^-) - \Phi(z^+) \right], 
\end{split}
\label{e:Exp_RandR}
\end{equation}
where $z=\f{a - \mu(\x, s)}{\sigma(\x, s)}$, $z^\pm = \f{a^\pm - \mu(\x, s)}{\sigma(\x, s)}$, and $a^{\pm} = a \pm \delta(\x, s)$. Note that \eqref{e:ranjan} and \eqref{e:bichon} practically have the same meaning, but \eqref{e:bichon} leads to a more convenient analytical form upon taking the expectation, as opposed to that in \eqref{e:ranjan}.  We therefore propose to specify our value function, also extending it to the multifidelity setting, as
\begin{equation}
    \upsilon(\x, 1) = EI_b(\x, 1),
\end{equation}
and define our acquisition function as follows:
\begin{equation}
    \alpha(\x, s) = \mbb{E}_{y \sim Y_n} \left[\max_{\x' \in \mcl{X}} EI_b(\x', 1) | \D_n \bigcup \{(\x, s), y\} \right],
    \label{e:LELSI}
\end{equation}
which essentially quantifies the \emph{average} \mg{value} of the maximum $EI_b$ at the highest fidelity ($s=1$) due to a candidate decision-fidelity pair $(\x, s)$. Note that we extend the acquisition function in \cite{bichon2008efficient} to general multifidelity cases, where the fidelity space could be continuous or discrete. The acquisition function in \eqref{e:LELSI} is then plugged into \Cref{a:BO} to obtain a final surrogate model $\tilde{g}_q(\x, 1)$ of $\hat{f}(\x, 1)$.

\subsection{Importance sampling}
\label{ss:mis}
The last step of our approach is the estimation of the failure probability. The multifidelity surrogate model that is adaptively constructed is now used to construct a biasing distribution. This distribution is then used to generate a sample set using which the failure probability is  estimated.

Let $q'^{(n)}_\x$ be the density of the biasing distribution, $\mbb{P}'^{(n)}_\x$, after $n$ rounds of updating our multifidelity surrogate model. Then, the importance sampling estimate is given by
\begin{align}\label{eqn:isestimate}
    p_{\mathcal{F}}^{\textrm{IS}} = \frac{1}{N} \sum_{i=1}^N \mathbb{I}_{\{ \x^i \in \mathcal{F} \}} \f{q(\x^i)}{q^{'(n)}_\x}\,,
\end{align}
where $\x^i,~i=1,\ldots,N$ are iid samples drawn from $\mbb{P}^{'(n)}_\x$.

\subsection{Overall \texttt{CAMERA} algorithm}
\label{ss:fpe}
We are now ready to summarize our overall algorithm. We use a Gaussian mixture model (GMM)~\cite{jerome2001elements} to learn the biasing density $q'^{(n)}_\x$ from a sample set generated from the GP. For all the cases, we set the number of components of the GMM to 25 and restrict the covariance matrix to be diagonal. A few trials indicated that these settings are sufficient to get a good approximation to the biasing density across all the experiments. The expectation-maximization algorithm~\cite{dempster1977maximum, wu1983convergence} is used to learn the hyperparameters of the GMM. Once fit, this distribution is sampled to generate a biasing set that is evaluated on the high-fidelity model $\hat{f}(\cdot, 1)$ to compute the IS estimate of failure probability. The exact steps are summarized in \Cref{alg:camera}.

\LinesNumbered
\begin{algorithm2e}[h!]
\SetAlgoLined
\textbf{Input:} 
\begin{itemize}
    \item Surrogate model: Multifidelity GP posterior of $f$, $\mcl{GP}(\mu_n(\x, s), \sigma^2_n(\x, s))$ and observed data $\D_n$.
    \item Cumulative cost $C_n = \sum_{j=1}^n c(s_j)$, total budget $B$.
    \item High-fidelity sample size $N$.
    \item Biasing sample size $M$.
\end{itemize}
\KwResult{$\pfis$}
  \While{$C_i \leq B,~ \forall~ i=n+1, \ldots$, }{
  \begin{enumerate}
      \item Find $\x_i, s_i \in \underset{(\x, s) \in \X \times \mcl{S}}{\argmax}~ \hat{\alpha}(\x, s) / c(s)$ \hfill \# acquisition function maximization
      \item Observe $\hat{y}_i$ = $f(\x_i, s_i) $ \hfill \# new multifidelity observation
      \item Cumulative cost $C_i = C_{i-1} + c(s_i)$
      \item Append $\D_i = \D_{i-1} \cup \lbrace (\x_i, s_i), \hat{y}_i \rbrace$ \hfill \# append data set
      \item Update GP hyperparameters $\bm{\Omega}$ \hfill \# recompute GP hyperparameters
  \end{enumerate}
 }
 Let $q>n$ be the last iteration of updating the surrogate model. 
 \begin{enumerate}
     \setcounter{enumi}{5}
     \item Fit a biasing distribution $\mbb{P}'^{(q)}_{\x}$ with density $q'^{(q)}_\x$
    \begin{enumerate}
        \item \label{it:biasing_dist} Obtain a sample set of cardinality $M$, using the \mg{multifidelity} surrogate model \mg{at $s=1$}: \[\X' = \{\x_i \in \X: \tilde{g}_q(\x_i, 1) - a =  \mu_q(\x_i, 1) - a \leq 0, ~i = 1, \ldots, M \}.\]
        \item Fit a GMM with $\X'$ to obtain the biasing distribution $\mbb{P}'^{(q)}_{\x}$ with density $q'^{(q)}_{\x}$.
    \end{enumerate}
     \item Obtain a biasing sample set  
     \[\X'_\T{biasing} = \{\x_k \sim \bar{\mbb{P}}^{'q}_{\x},~k=1, \ldots, N\}. \]
     \item Compute indicator function (with {\it high fidelity} samples) and IS weights \[ \bI = \{\mbb{I}_{\{\x_k \in \mcl{F} \}}\},~~\mbf{w} = \left\{ \f{q_\x(\x_k)}{q'^{(q)}_{\x}(\x_k)} \right\},~k=1, \ldots, N,\]
     \item compute the IS failure probability estimate \\
        
        \[ \pfis = \f{\bI ^\top \mbf{w}}{N}\]
 \end{enumerate}
 \caption{\texttt{CAMERA}: Cost-aware Adaptive Multifidelity Efficient Reliability Analysis.}
 \label{alg:camera}
\end{algorithm2e}


Note that $M$ in step \ref{it:biasing_dist} in \Cref{alg:camera} can be picked arbitrarily large since it does not involve any high-fidelity model queries, only the surrogate model. On the other hand, $N$ must be tractable since it involves high-fidelity queries. Instead of directly setting $M$, we first uniformly sample $\X$ with $10^7$ samples and filter out the sample set to obtain $\X'$ as in step 2 using the surrogate model.

\section{Theoretical Properties}
\label{s:theory}

Recall that our surrogate-based IS estimator is unbiased. We now show that our adaptive surrogate model is consistent; that is, with high probability the predicted level set converges to the true level set asymptotically. We first demonstrate that the points selected by maximizing our multifidelity acquisition function~\eqref{e:costaware} asymptotically reduce the GP posterior variance to $0$ everywhere; see \Cref{lm:unique_points} and \Cref{lm:gpvariance}. Then, in \Cref{thm:ass_consistency}, we show that our surrogate model is asymptotically consistent.

We begin by making a smoothness assumption on our unknown function $f$, followed by an assumption on the limit state function.
\begin{assumption}
\label{ass:lipschitz}
We assume that the sample paths drawn from the GP prior on $f$ are twice continuously differentiable. Such a GP is achieved by choosing the covariance kernel to be four times differentiable~\citep[Theorem 5]{ghosal2006posterior}, for  example, the squared-exponential or Matérn-class kernel. Then, the following holds (for $L \in \mbb{R}_+$):
\[\mP \left[ |f(\x) - f(\x')| > L \|\x - \x'\| \right] \leq e^{-L^2 / 2}.\]
\end{assumption}

\begin{assumption}
\label{ass:affine_g}
The limit state function $g$ is an affine function of $f$, defined as $g(f(\x)) = \rho f(\x) - a$. W.l.o.g., we pick $\rho = 1$.
\end{assumption}

First, we  show that our proposed two-step lookahead acquisition function will never choose the same point twice. We present this in Lemmas~\ref{lm:g_identity} and \ref{lm:unique_points}. 
\begin{lemma}
{\label{lm:g_identity}} 
Without the $\max$ operator in \eqref{e:twostep}, the multifidelity acquisition function reduces to the value function. That is, 
\[\mbb{E}_{y} [\upsilon(\x', 1) | \D_n \bigcup \{\x, y\}] = \upsilon(\x', 1) | \D_n \].
\end{lemma}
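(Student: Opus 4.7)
The plan is to unpack the definition of the value function and then recognize the statement as an instance of the tower property of conditional expectation. From \eqref{e:value_function},
\[
\upsilon_n(\x', 1; \bw) \;=\; \mbb{E}_{y' \sim Y|\D_n}\!\left[ h(y'(\x', 1); \bw)\right],
\]
so conditioning the value function on the augmented data set $\D_n \cup \{(\x, s), y\}$ simply means evaluating the same inner expectation under the updated posterior $Y \mid \D_n \cup \{(\x, s), y\}$. Writing the claim as a nested expectation,
\[
\mbb{E}_y\!\left[\upsilon(\x', 1) \mid \D_n \cup \{(\x, s), y\}\right]
\;=\; \mbb{E}_{y \sim Y|\D_n}\!\left[\, \mbb{E}_{y' \sim Y \mid \D_n \cup \{(\x, s), y\}}\!\left[ h(y'(\x', 1); \bw)\right] \right],
\]
where the outer $y$ is drawn from the marginal of $Y\mid\D_n$ at $(\x, s)$.

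Next, I would appeal to the tower property. Since $y$ and $y'$ are jointly Gaussian under the posterior $Y\mid\D_n$ (as values of the same sample path at $(\x, s)$ and $(\x', 1)$ respectively), the iterated expectation collapses:
\[
\mbb{E}_{y \sim Y|\D_n}\!\left[\, \mbb{E}_{y' \mid \D_n, (\x,s), y}[h(y'(\x', 1); \bw)]\right]
\;=\; \mbb{E}_{y' \sim Y|\D_n}\!\left[ h(y'(\x', 1); \bw)\right]
\;=\; \upsilon(\x', 1)\mid \D_n,
\]
which is exactly the desired identity. Intuitively, averaging over a hypothetical observation $y$ that is itself drawn from the current posterior provides no new information and recovers the current posterior belief.

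The only subtlety to flag carefully is that the outer expectation is with respect to the \emph{current} predictive law $Y \mid \D_n$ at $(\x, s)$, not some exogenous distribution; otherwise the tower property would not apply. This is a purely bookkeeping issue about which measure $y$ is sampled from, and is consistent with how the acquisition function in \eqref{e:twostep} is constructed (the density $q_y(y \mid \D_n)$ appearing there is precisely the marginal GP predictive at $(\x, s)$). I do not anticipate a technical obstacle beyond making this measurability/consistency point explicit.
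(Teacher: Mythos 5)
Your proposal is correct and follows essentially the same route as the paper's proof: unpack the value function as an inner expectation of $h$, write the left-hand side as a nested expectation under the posterior $Y\mid\D_n$, and collapse it via the tower property of conditional expectation. The measure-theoretic point you flag (that the outer $y$ is drawn from the current GP predictive at $(\x,s)$) is exactly the implicit assumption the paper relies on in its second-to-third line.
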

\begin{proof}
Let us write 
\[ \mbb{E}_{y} [\upsilon(\x', 1) | \D_n \bigcup \{(\x,s), y \}] = \mbb{E}_{y(\x,s)} [\upsilon_n(\x',1) | y(\x,s)], \]
where $\upsilon_n(\cdot,\cdot) = \upsilon(\cdot,\cdot) | \D_n$. Using the definition of the value function, we have
\begin{equation}
    \begin{split}
        \mbb{E}_{y(\x,s)} [\upsilon_n(\x',1) | y(\x,s)] =& \mbb{E}_{y(\x,s)} [ \mbb{E}_{y(\x',1)} [h(y(\x',1), \bw) ] | y(\x,s)] \\
        =& \mbb{E}_{y(\x,s)} [ \mbb{E}_{y(\x',1)} [h(y(\x',1), \bw) | y(\x,s) ] ] \\
        =& \mbb{E}_{y(\x',1)} [h(y(\x',1), \bw)] \\
        =& \upsilon_n(\x',1).
    \end{split}
\end{equation}
\end{proof}

\begin{lemma}
\label{lm:unique_points}
$\forall i,j \in \{1, \ldots, n\}$, $(\x_i, s_i) \neq (\x_j, s_j)$, where $(\x_i, s_i)$ and $(\x_j, s_j)$ are the input-fidelity pairs that maximize $\alpha(\x,s) | \D_{i-1}$ and $\alpha(\x,s) | \D_{j-1}$, respectively, \mg{and hence are points previously observed by the algorithm}.
\end{lemma}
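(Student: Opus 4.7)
}
The plan is to show that the acquisition value at any previously observed pair equals a constant that is a lower bound of the acquisition value at any non-observed pair, with strict inequality generically, so that the argmax cannot recover an already-observed pair. The key ingredients are (i) noise-free, deterministic observations (see the first Remark in \Cref{subsec:gp}), (ii) Lemma~\ref{lm:g_identity}, and (iii) Jensen's inequality applied to the convex $\max$ functional.

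First I would evaluate $\alpha(\x,s)$ at a previously observed pair. Suppose $(\x_j,s_j)\in \D_n$ for some $j\le n$. Since observations are noise-free, the GP posterior $Y\mid \D_n$ is degenerate at $(\x_j,s_j)$, i.e.\ $\sigma_n^2(\x_j,s_j)=0$ and $y(\x_j,s_j)=\hat y_j$ almost surely. Therefore augmenting $\D_n$ with $\{(\x_j,s_j),y\}$ where $y\sim Y\mid\D_n$ yields exactly the same conditioning set (up to a null event), so the posterior is unchanged, $\upsilon_{n,1}(\x',1)=\upsilon_n(\x',1)$ with probability one, and
\begin{equation*}
\alpha(\x_j,s_j)\;=\;\mbb{E}_n\!\left[\max_{\x'\in \X}\upsilon_{n,1}(\x',1)\right]\;=\;\max_{\x'\in\X}\upsilon_n(\x',1)\;=:\;M_n,
\end{equation*}
which is a constant independent of the candidate pair.

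Next I would lower-bound $\alpha(\x,s)$ for any candidate pair by $M_n$. By Jensen's inequality applied to the convex functional $y\mapsto \max_{\x'}\upsilon(\x',1)\mid \D_n\cup\{(\x,s),y\}$ and then by Lemma~\ref{lm:g_identity},
\begin{equation*}
\alpha(\x,s)\;\geq\;\max_{\x'\in\X}\mbb{E}_n\bigl[\upsilon(\x',1)\mid \D_n\cup\{(\x,s),y\}\bigr]\;=\;\max_{\x'\in\X}\upsilon_n(\x',1)\;=\;M_n.
\end{equation*}
Combined with the previous step, this already shows $\alpha(\x_j,s_j)\le \alpha(\x,s)$ for every candidate $(\x,s)$.

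The main obstacle is upgrading this to a \emph{strict} inequality so that the $\argmax$ genuinely avoids $(\x_j,s_j)$ (ties could otherwise re-select an observed point). I would argue as follows: for any $(\x,s)\notin \D_n$, one has $\sigma_n^2(\x,s)>0$ (the Mat\'ern kernel is strictly positive definite on distinct inputs), so the conditioning random variable $y\sim Y\mid\D_n$ is genuinely non-degenerate at $(\x,s)$. The map $y\mapsto \upsilon_{n,1}(\x',1)$ is nonconstant for a generic $\x'$ (since updating the GP with an informative observation shifts the posterior mean and variance and hence $\upsilon=EI_b$), and $\max_{\x'}\upsilon_{n,1}(\x',1)$ inherits strict convexity in $y$ on a set of positive measure, so Jensen's inequality is strict. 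Thus $\alpha(\x,s)>M_n=\alpha(\x_j,s_j)$ for every unobserved $(\x,s)$, and the maximizer $(\x_{n+1},s_{n+1})$ must differ from every previously observed pair. Iterating this bound from $i$ to $j$ with $i<j$ gives $(\x_i,s_i)\ne (\x_j,s_j)$, which is the claim. Any pathological ties (zero-measure events where the strict inequality fails) can be broken by an arbitrarily small tie-breaking perturbation, consistent with standard treatments of acquisition optimization.
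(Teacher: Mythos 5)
Your proposal follows essentially the same route as the paper's own proof: evaluate $\alpha$ at an observed pair (where the posterior is degenerate) to get the constant $\max_{\x'}\upsilon_n(\x',1)$, then apply Jensen's inequality together with Lemma~\ref{lm:g_identity} to lower-bound $\alpha$ at any unobserved pair by that same constant, and finally argue strictness from $\sigma_n^2(\x,s)>0$ at unobserved points. You are in fact somewhat more candid than the paper about the weakest link --- that Jensen's inequality must be \emph{strict}, which requires the map $y\mapsto\max_{\x'}\upsilon_{n,1}(\x',1)$ to be genuinely nonconstant in $y$ --- but the overall argument and its ingredients coincide with the published proof.
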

\begin{proof}
For a given $(\x,s) \in \X \times \Sc$, let $\sigma^2_n(\x, s) = 0$;   in other words, a noise-free observation was previously made at $(\x, s)$. Then,
\begin{equation*}
    \begin{split}
        \alpha(\x,s) = \mbb{E}_{y(\x, s)}[\max_{\x' \in \X}\upsilon(\x', 1)|\{(\x, s), y(\x, s)\}] = \max_{\x' \in \X}\upsilon(\x', 1)|\{(\x, s), \mu_n(\x)\} = \max_{\x' \in \X}\upsilon(\x', 1),
    \end{split}
\end{equation*}
which follows from the fact that $y(\x, s) | (\sigma_n^2(\x, s) = 0) = \mu_n(\x, s)$ and  that the value function conditioned on a previously observed point is unchanged. Now consider a candidate point (i.e., an unobserved point) $(\x'', s'') \in (\X \times \Sc) \backslash (\x, s)$,
\begin{equation*}
    \begin{split}
        \alpha(\x'', s'') = \mbb{E}_{y(\x'', s'')}[\max_{\x' \in \X}\upsilon(\x', 1)|\{ (\x'', s''), y(\x'', s'')\}] \geq \max_{\x' \in \X}\upsilon(\x', 1) = \alpha(\x, s),
    \end{split}
\end{equation*}
where the inequality follows from Jensen's inequality~\citep[Ch. 3]{rudin1970real} and we have also used \Cref{lm:g_identity}. Furthermore, the equality holds only if $(\x, s) = (\x'', s'')$ or $\sigma^2_n(\x, s) = 0 = \sigma^2_n(\x'', s'')$, and therefore $(\x, s) \neq (\x'', s'')$, since $(\x'', s'')$ is unobserved and hence $\sigma^2_n(\x'', s'') > 0$.
\end{proof}
Now, using \Cref{lm:unique_points}, we will show that the multifidelity acquisition strategy reduces the GP posterior variance to $0$ at $s=1,~\forall \x \in \X$.

\begin{lemma}
\label{lm:gpvariance}
Let $\{\x_i\},~i=1,\ldots,n$ be a sequence of points selected via our multifidelity acquisition strategy. Then, $\lim_{n \rightarrow \infty}~\sup_{\x \in \X} \sigma^2_n(\x, 1) = 0$. 
\end{lemma}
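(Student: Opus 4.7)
The plan is to combine monotonicity of the GP posterior variance, compactness of $\X$, and a contradiction argument that exploits the structure of the multifidelity acquisition rule. First I will observe that, for a noise-free GP, conditioning on additional observations can only decrease the posterior variance pointwise, so $n \mapsto \sigma^2_n(\x, 1)$ is non-increasing for every $\x \in \X$. Since $\X$ is compact and $\sigma^2_n(\cdot, 1)$ is continuous (inherited from continuity of the Matérn kernel under \Cref{ass:lipschitz}), the supremum $M_n := \sup_{\x \in \X} \sigma^2_n(\x, 1)$ is attained and is itself a non-increasing, nonnegative sequence. Therefore $M_n \downarrow M_\infty$ for some $M_\infty \geq 0$, and the claim reduces to showing $M_\infty = 0$.

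I will argue by contradiction: suppose $M_\infty > 0$. Choosing $\x_n^* \in \argmax_{\x \in \X} \sigma^2_n(\x, 1)$ and applying Bolzano--Weierstrass, I can extract a subsequence $\x_{n_k}^* \to \x^\dagger \in \X$. By continuity of the posterior variance in $\x$ and its monotonicity in $n$, there exists a fixed open neighborhood $U \ni \x^\dagger$ and a constant $\epsilon > 0$ with $\sigma^2_n(\x, 1) \geq \epsilon$ for all $\x \in U$ and all $n$ sufficiently large. Substituting this lower bound into the closed-form expression \eqref{e:Exp_RandR} for $EI_b(\x, 1)$ yields a positive uniform lower bound on $\upsilon_n(\x, 1)$ over $U$, and therefore on $\max_{\x' \in \X} \upsilon_n(\x', 1)$.

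Next I will use \Cref{lm:unique_points} together with the cost-normalized selection rule \eqref{e:costaware}. Since $(\x_{n+1}, s_{n+1})$ maximizes $\hat{\alpha}(\x, s)/c(s)$ and, by Jensen's inequality (as in the proof of \Cref{lm:unique_points}), $\hat{\alpha}(\x^\dagger, s) \geq \max_{\x'} \upsilon_n(\x', 1)$ is bounded away from zero for every $s \in \Sc$, each iteration must realize a strictly positive expected gain in $\max_{\x'} EI_b(\x', 1)$. Using the product-kernel identity $k((\x, s), (\x^\dagger, 1)) = k_\x(\x, \x^\dagger)\, k_s(s, 1) > 0$ and the explicit GP-variance update formula, I will translate this gain into a quantifiable strict decrease of $\sigma^2_n(\x, 1)$ on $U$. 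Summing these decrements, which are bounded below by a positive constant along the subsequence, forces $\sigma^2_n(\x, 1) \to 0$ on $U$, contradicting $M_\infty > 0$.

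The hard part will be this last step: turning the lower bound on the cost-normalized acquisition value into a uniform per-step strict decrease of $\sigma^2_n(\cdot, 1)$ on $U$. This requires using the product structure of the kernel to ensure that observations taken at fidelities $s < 1$ still strictly reduce the variance at $s = 1$ in a quantifiable way, and showing that this reduction cannot asymptotically vanish along the sequence of selected points; the cost penalty $c(s)$ complicates the analysis, since it may push the algorithm toward low-fidelity evaluations whose per-step information gain about the $s = 1$ slice must be explicitly controlled via the Cauchy--Schwarz-type bound on $k_s(s, 1)^2/k_s(s, s)$.
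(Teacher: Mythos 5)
Your strategy differs substantially from the paper's: the paper argues (quite informally) that because \Cref{lm:unique_points} forbids repeated selections, the chosen points eventually make the cross-covariance vector $\mbf{k}_n$ coincide with a column of $\mbf{K}_n$, so the posterior-variance formula collapses to zero; you instead set up a monotonicity-plus-compactness contradiction. Your setup through the extraction of $\x^\dagger$ and the neighborhood $U$ with $\sigma^2_n(\cdot,1)\geq\epsilon$ is sound, but the proof has a genuine gap exactly where you flag it, and it is not merely technical. Jensen's inequality gives $\hat{\alpha}(\x,s) \geq \max_{\x'}\upsilon_n(\x',1)$ for \emph{every} candidate $(\x,s)$ (with equality at already-observed points), so the lower bound you derive is uniform over candidates and says nothing about which point the rule \eqref{e:costaware} actually selects, nor about how much that selection reduces $\sigma^2_n(\cdot,1)$ on $U$. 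A maximizer of $\hat{\alpha}/c$ may lie far from $U$ and at a fidelity $s$ with $k_s(s,1)$ arbitrarily small, in which case the variance of the $s=1$ slice over $U$ decreases by an amount not bounded away from zero; ``each iteration realizes a strictly positive expected gain'' does not follow from your bounds, and even pointwise positivity of the decrements would not suffice for your summation step --- you need a uniform positive lower bound along a subsequence, which is precisely what is missing. The standard device for closing such arguments (bounding $\sum_n \sigma^2_n(\x_{n+1},s_{n+1})$ by the maximal information gain and then relating the acquisition value at the selected point back to its posterior variance) is not invoked, and adapting it to a two-step lookahead, cost-normalized, multifidelity acquisition is the actual mathematical content this lemma requires.

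A secondary, repairable gap: $\sigma^2_n(\x,1)\geq\epsilon$ on $U$ does not by itself give a uniform positive lower bound on $EI_b(\x,1)$ there, because \eqref{e:Exp_RandR} decays like $\exp\bigl(-(\mu_n(\x,1)-a)^2/(2\sigma^2_n(\x,1))\bigr)$ when $|\mu_n(\x,1)-a|\gg\sigma_n(\x,1)$; you would also need a uniform high-probability bound on the posterior mean, e.g.\ via sample-path boundedness under \Cref{ass:lipschitz}. For what it is worth, the paper's own proof is also incomplete --- it asserts without justification that $\mbf{k}_n$ converges to a column of $\mbf{K}_n$ for every $\x$ at $s=1$, i.e.\ that the selected points become dense near the $s=1$ slice, which does not follow from \Cref{lm:unique_points} alone --- so your instinct that a more quantitative argument is needed is correct; it simply has not been carried out.
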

\begin{proof}
W.l.o.g, let $k((\x,s), (\x,s)) = 1$. Recall that the posterior variance of the GP is given by
\[ \sigma_n^2(\x, s) = k((\x,s), (\x,s)) - \mbf{k}_n^\top \mbf{K}_n^{-1}\mbf{k}_n.\]

Note that as $n \rightarrow \infty$, and if \Cref{lm:unique_points} holds, then $\mbf{k}_n \rightarrow \mbf{K}^{:,i}_n$, where by $\mbf{K}^{:,i}_n$ we mean the $i$th column of $\mbf{K}_n$, $i \in [n]$, and therefore $\mbf{k}_n^\top \mbf{K}_n^{-1} = \mbf{e}_i$, where $\mbf{e}_i$ is an $n$-vector with $1$ at the $i$th location and $0$ elsewhere. Then
\[ \lim_{n \rightarrow \infty} \sigma_n^2(\x,1) = 1 - \mbf{K}^{:,i~\top}_n \mbf{e}_i = 1 - k((\x_i, 1), (\x_i, 1)) = 0.\]
\end{proof}
We are now ready to show that with high probability, the predicted failure level set via our multifidelity surrogate model converges to the true failure level set. For this purpose we assume a discrete set $\tC \subset \mcl{C}$ and show that the absolute difference between the surrogate prediction at $\tC$ and the true failure level set is bounded with high probability.

\begin{theorem}[Asymptotic consistency]
\label{thm:ass_consistency}
Let $\delta \in (0, 1)$ and Assumptions \ref{ass:lipschitz} and \ref{ass:affine_g} hold, with $L^2 = 2 \log \left(\f{2}{\delta}\right)$. Let $\tC$ denote a discretization of $\C$ with $|\tC| < \infty$, where we evaluate the true failure boundary. Let $\epsilon \geq 0$, and $a_n = \sqrt{2\log(b_n |\tC|)/\delta}$, where $\sum_{n=1}^\infty 1/b_n = 1$. Then the predicted failure level set converges to the true failure level set with high probability; that is, the following holds:

\[ \mP \left[\forall \x\in \X, ~ \lim_{n \rightarrow \infty}~|g(f(\x, 1)) - g(\mu_{n-1}(\x, 1))| \leq \epsilon \right] > 1 - \delta. \]
\end{theorem}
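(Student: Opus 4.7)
The plan is to combine the affine structure of the limit state function, a Gaussian tail bound on the GP posterior with a double union bound (over the discretization $\tC$ and over iterations $n$), and the posterior-variance decay of Lemma~\ref{lm:gpvariance}, followed by a transfer of the finite-set bound to all of $\X$ via the sample-path regularity in Assumption~\ref{ass:lipschitz}.

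First I would use Assumption~\ref{ass:affine_g} to write
$|g(f(\x,1)) - g(\mu_{n-1}(\x,1))| = |f(\x,1) - \mu_{n-1}(\x,1)|$, so the claim reduces to controlling the gap between $f$ and the posterior mean at $s=1$. Under the GP posterior, $f(\x,1) \mid \D_{n-1}$ is Gaussian with mean $\mu_{n-1}(\x,1)$ and variance $\sigma_{n-1}^2(\x,1)$, so the standard Gaussian tail inequality gives
\[
\mP\bigl[|f(\x,1) - \mu_{n-1}(\x,1)| > a_n\, \sigma_{n-1}(\x,1)\bigr] \leq 2 e^{-a_n^2/2}.
\]
With the choice $a_n = \sqrt{2\log(b_n|\tC|/\delta)}$, the per-point failure probability at iteration $n$ is at most $\delta/(b_n|\tC|)$. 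A union bound over $\x \in \tC$ produces $\delta/b_n$ at each $n$, and a second union bound over $n \geq 1$ contributes a total of $\sum_{n=1}^\infty \delta/b_n = \delta$ by the summability assumption. Hence, on an event $E$ of probability at least $1-\delta$, the confidence envelope $|f(\x,1) - \mu_{n-1}(\x,1)| \leq a_n\,\sigma_{n-1}(\x,1)$ holds simultaneously for every $n \geq 1$ and every $\x \in \tC$.

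Next I would invoke Lemma~\ref{lm:gpvariance}, which gives $\sup_{\x \in \X}\sigma_{n-1}^2(\x,1) \to 0$. Since $a_n$ grows only like $\sqrt{\log(b_n|\tC|/\delta)}$, any moderate growth of $b_n$ (e.g., $b_n \propto n^2$ rescaled so that $\sum 1/b_n = 1$) ensures $a_n\,\sigma_{n-1}(\x,1) \to 0$ as $n\to\infty$. Combined with the previous paragraph, on the event $E$ every $\x \in \tC$ satisfies $|f(\x,1) - \mu_{n-1}(\x,1)| \to 0$, which already yields the $\epsilon$-bound on the discretized set.

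Finally, to promote the bound from the finite mesh $\tC$ to all of $\X$, I would use Assumption~\ref{ass:lipschitz}: with the prescribed constant $L^2 = 2\log(2/\delta)$, sample paths of $f$ are Lipschitz with probability $\geq 1-\delta/2$, and the posterior mean $\mu_{n-1}(\cdot,1)$ inherits a comparable modulus of continuity from the smoothness of the kernel. Reapportioning the probability budget ($\delta/2$ for Lipschitzness, $\delta/2$ for the concentration bounds, with $a_n$ defined using $\delta/2$ in place of $\delta$) and refining $\tC$ so that $\X$ is covered by balls of radius $\epsilon/(2L)$ around mesh points then extends the additive bound from $\tC$ to every $\x \in \X$ within tolerance $\epsilon$. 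The main obstacle is this last transfer step: the theorem defines $\tC$ as a discretization of $\C$ rather than of $\X$, so making the statement ``$\forall \x \in \X$'' rigorous requires either reinterpreting $\tC$ as a refinement indexed along $n$ (with $|\tC|$ growing together with $n$) or restricting the conclusion to a neighborhood of $\C$, and in either case the growth rate of $a_n$ must be tuned against the mesh resolution so that $a_n\,\sigma_{n-1}$ still vanishes uniformly.
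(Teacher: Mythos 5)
Your proposal follows essentially the same route as the paper's proof: a Gaussian tail bound with $a_n$ chosen so that the double union bound over $\tC$ and over $n$ costs a controlled fraction of $\delta$, Lemma~\ref{lm:gpvariance} to drive the $a_n\,\sigma_{n-1}(\x,1)$ term to zero, and Assumption~\ref{ass:lipschitz} with $L^2 = 2\log(2/\delta)$ to transfer the finite-mesh bound to all of $\X$ via the nearest point $\tilde{\x}\in\tC$. The two obstacles you flag at the end are genuine and are present in the paper's own argument as well: the paper concludes from $\sigma_{n-1}\to 0$ alone even though $a_n\to\infty$ (so one does need $a_n\,\sigma_{n-1}\to 0$, which requires tying the growth of $b_n$ to the decay rate of the posterior variance), and its final inequality actually bounds $|g(f(\x,1)) - g(\mu_{n-1}(\tilde{\x},1))|$ with $\epsilon = L\|\x-\tilde{\x}\|$ fixed by the mesh resolution, rather than the stated quantity $|g(f(\x,1)) - g(\mu_{n-1}(\x,1))|$ with arbitrary $\epsilon$.
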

\begin{proof}
The  proof is based on \cite{srinivas2009gaussian}, but we differ in the choice of constants. We first present a simple identity that bounds the probability that a Gaussian random variable is greater than some value. Given $r \sim \mcl{N}(0, 1)$ and $c > 0$, consider the following:
\begin{equation}
    \begin{split}
        \mP(r > c) =& \int_c^\infty \f{1}{\sqrt{2\pi}} e^{-r^2 / 2} dr = e^{-c^2 /2}  \int_c^\infty \f{1}{\sqrt{2\pi}} e^{-(r - c)^2 / 2} e^{-c(r-c)} dr\\
        \leq & e^{-c^2 / 2} \int_c^\infty \f{1}{\sqrt{2\pi}} e^{-(r - c)^2 / 2} dr \leq \f{1}{2}e^{-c^2 / 2},
    \end{split}
\end{equation}
where the second line is due to the fact that $0 \leq e^{-c(r-c)} \leq 1$ for $r \geq c$ and $c>0$ and the last line from the fact that the integrand is the density of $\mcl{N}(c, 1)$. Using this result and the prior assumption that the unknown function $f$ has a GP prior, we state, with $c = a_n$ and a fixed $\x_i \in \tC$,
\begin{equation}
    \begin{split}
    \mP \left[ |g(f(\x_i,1)) - g(\mu_{n-1}(\x_i,1))| > a_n \sigma_{n-1}(\x_{i},1) \right] \leq & \f{1}{2} e^{-a_n^2 / 2} = \f{\delta}{2 b_n |\tC|}. 
    \end{split}
\end{equation}
Applying the union bound $\forall i$, we get
\begin{equation*}
    \begin{split}
    \mP \bigcup_{i=1}^{|\tC|} |g(f(\x_i,1)) -g(\mu_{n-1}(\x_i,1))| > & a_n \sigma_{n-1}(\x_{i},1) \leq \\
        &\sum_{i=1}^{\tC}  \mP \left[ |g(f(\x_i,1)) - g(\mu_{n-1}(\x_i,1))| > a_n \sigma_{n-1}(\x_{i},1) \right]\\
        \leq & \sum_{i=1}^{\tC} \f{\delta}{2 b_n |\tC|} = \f{\delta}{2 b_n}.
    \end{split}
\end{equation*}
Now we apply the union bound $\forall n$:
\begin{equation}
    \begin{split}
        \mP \left[\forall \tx \in \tC,~ \bigcup_{n=1}^{\infty} |g(f(\tx,1)) - g(\mu_{n-1}(\tx,1))| > a_n \sigma_{n-1}(\tx,1)\right] \leq & \sum_{n=1}^{\infty} \f{\delta}{2 b_n} = \f{\delta}{2}.    \\  
    \end{split}
    \label{e:bound_for_discreteX}
\end{equation}
We would like to extend the proof to a general $\x \in \X$. For some $\x \in \X$, let $\tilde{\x}$ denote a point $\in \tC$ that is nearest (in Euclidean distance) to $\x$. Then, per Assumption~\ref{ass:lipschitz} and by setting $L^2 = 2 \log \left(\f{2}{\delta}\right)$, the following statement holds:
\begin{equation}
    \mP \left[ |g(f(\x,1)) - g(f(\tilde{\x},1))| > \sqrt{2 \log \left(\f{2}{\delta}\right)} \|\x - \tilde{\x}\| \right] \leq \f{\delta}{2}.
    \label{e:lipshictz_bound}
\end{equation}
Combining \eqref{e:bound_for_discreteX} and \eqref{e:lipshictz_bound}, the following statement holds:
\[ \mP \left[\forall \x \in \X, \forall n,~ |g(f(\x,1)) - g(\mu_{n-1}(\tilde{\x},1))| \leq a_n \sigma_{n-1}(\x,1) + \sqrt{2 \log \left(\f{2}{\delta}\right)} \|\x - \tilde{\x}\| \right] > 1 - \delta. \]
From \Cref{lm:gpvariance}, the $\lim_{n \rightarrow \infty} \sup_{\x \in \X} \sigma_{n-1}(\x, 1) = 0$, and thus
\[ \mP \left[\forall \x \in \X, \lim_{n\rightarrow \infty}|g(f(\x,1)) - g(\mu_{n-1}(\tilde{\x},1))| \leq \epsilon \right] > 1 - \delta, \]
where $\epsilon = \sqrt{2 \log \left(\f{2}{\delta}\right)} \|\x - \tilde{\x}\|$.
\end{proof}

\section{Numerical Experiments}
\label{sec:num_expts}

We first demonstrate our proposed method on synthetic test functions of varying dimensions in \Cref{s:expts_synth} and then on two real-world applications: a gas turbine blade stress analysis test case using finite element analysis in \Cref{ss:expts_turbine} and a transonic wing aerodynamics test case using a finite volume method in \Cref{ss:expts_onera}. Our goal is to compare the multifidelity predictions of the failure probability against the predictions with the high-fidelity model only. For the synthetic test cases and the gas turbine blade test case, we estimate the \emph{true} failure probability using a naive MC with a set of points drawn uniformly at random from $\X$; note that we use this estimate as the ground truth. Then we compute the failure probability error as $\f{|\pf - \pfis|}{\pf}$. For the transonic wing test case, the true failure probability is unknown and  is prohibitive to compute;  we therefore report only the comparative predictions between single fidelity and multifidelity.

For all the experiments, we provide a set of seed points $\mcl{D}_n = \{(\x_i, s_i), y_i,~ i=1,\ldots,n\}$, \mg{selected via a randomized Latin hypercube design}, to start the algorithm. 
Note that we start both the single-fidelity and multifidelity runs with the same \emph{budget} of information; therefore, for any given experiment, the $n$ for single-fidelity and multifidelity runs are different in order to maintain the same total computational cost with which we start the experiment. Naturally, whereas for the multifidelity experiments the seed points are distributed uniformly in $\mcl{S}$, for the single-fidelity experiments the seed points are constrained to $\mcl{S} = \{1\}$. All the synthetic experiments are repeated a total of 20 times with randomized starting seed points. The turbine stress analysis test case is repeated only 5 times, keeping in consideration the higher computational cost of this experiment.  There is no repetition in the transonic wing test case because that would be computationally prohibitive.

For all the experiments, the following cost model is assumed:
\[ c(s) / c_0 = \mg{c_2} + \exp\left(-c_1 \times (1 - s) \right),\]
where $c_0$ is a normalizing constant that is required to ensure that the acquisition function optimization is not biased by the actual costs of the models; note that $c_0$ is also the cost of the high-fidelity ($s=1$) model, for which we fix $c_0 = 500$. The $c_1>0$ determines the steepness of the cost model; we set $c_1 = 10$ for all the experiments. Finally, $c_2> 0$ is a constant to ensure that the cost model is bounded away from zero and is roughly the cost of the lowest-fidelity model ($\hat{f}(\cdot, 0)$); we fix $c_2 = 0.1$. We note that our cost model---by virtue of setting $c_1=10$---is designed to emphasize the disparity between the computational costs of lower- and higher-fidelity model queries; see \Cref{fig:cost_model}. In \Cref{fig:ishigami_cost_models} we show the effect of the cost model with varying $c_1$ and discuss its effect on the predictions.

We evaluate our predictions qualitatively for the 2D cases, in other words, by comparing the failure level sets estimated via the final GP mean $\mu_q(\x, 1)$ against the true level sets in contour plots. For all the synthetic cases (including the 2D cases), we then compute the error in failure probability, as mentioned previously. The true failure probability is estimated via a naive MC with $N=10^6$ points sampled uniformly from $\X$. For the turbine case, we first evaluate the high-fidelity model at a total of $150$ parameter snapshots and use the observed QoIs to construct a radial basis function (RBF) interpolant. We then perform a naive MC on the RBF interpolant to obtain the true $p_{\mcl{F}}$. We  begin by discussing the synthetic experiments in the next section.
\begin{figure}
    \centering
    \includegraphics[width=.7\textwidth]{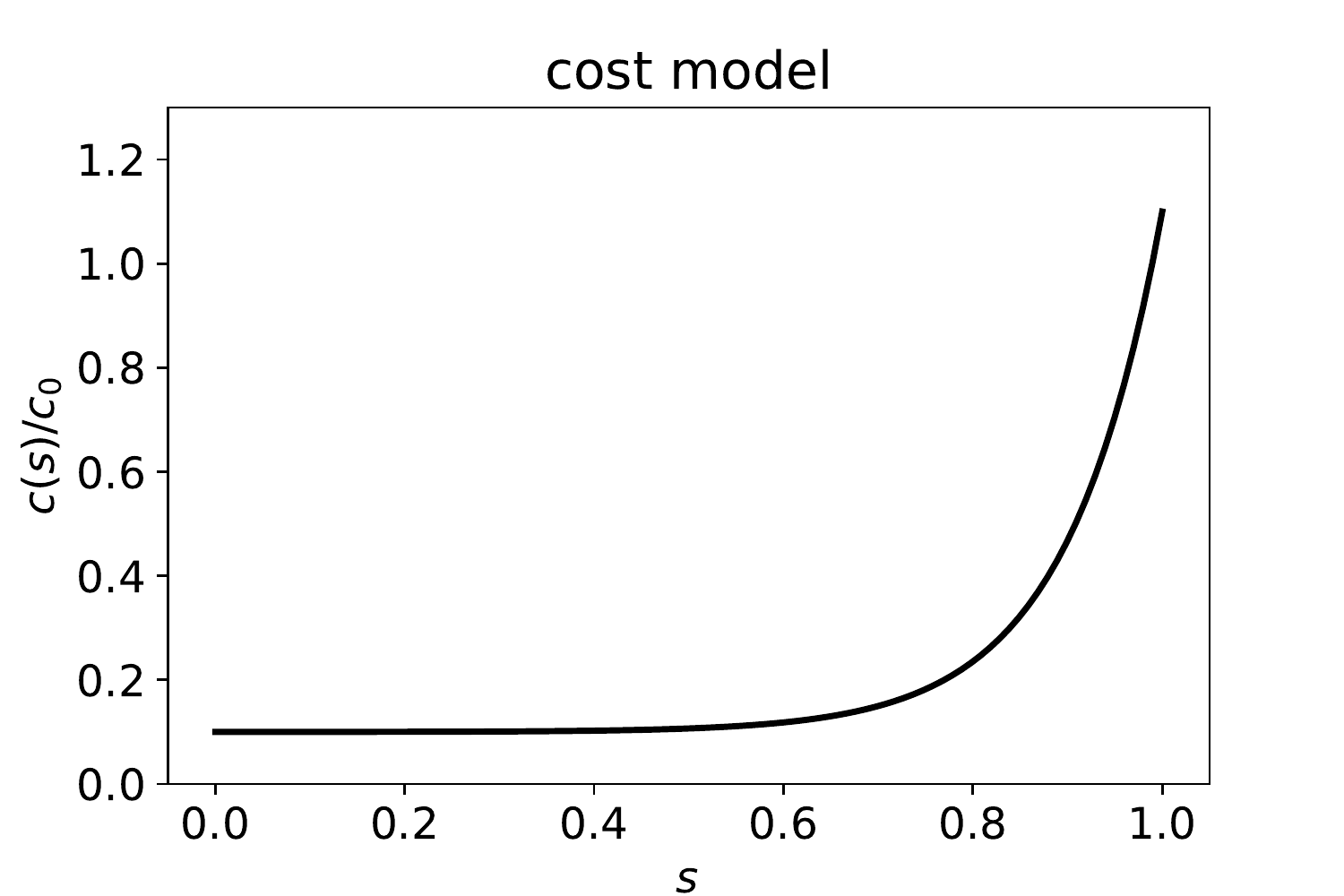}
    \caption{Multifidelity cost model $c(s)$ with $c_1 = 10$, which emphasizes the cost disparity across fidelities as one approaches $s=1$.}
    \label{fig:cost_model}
\end{figure}
\subsection{Synthetic experiments}
\label{s:expts_synth}
We introduce several new multifideity test functions that can be used to benchmark multifidelity methods beyond the scope of this work. Specifically, we introduce the 2D multifidelity multimodal function~\cite{bichon2008efficient}, the 2D multifidelity four-branches function~\cite{schobi2017rare}, and the 3D multifidelity Ishigami function~\cite{ishigami1990importance}. Additionally, we  use the 6D multifidelity Hartmann function provided in~\cite{simulationlib}. The specifics of these test functions are discussed as follows.

\subsubsection{2D multimodal function}
We extend the test function in \cite{bichon2008efficient} to multifidelity setting as shown in \eqref{e:mf_multimodal},

\begin{equation}
   f(\x, s) = \f{(x_1^2 + 4)(x_2 - 1)}{20} - s \times \sin \left( \f{5x_1}{2} \right) - 2,
   \label{e:mf_multimodal}
\end{equation}
where $\x=[x_1, x_2]$, the domain $\X = [-4,7] \times [-3,8]$, and the second term in the right-hand side introduces a fidelity dependence. This simple extension results in a correlated fidelity dependence, which is continuously differentiable in $\mcl{X}\times \mcl{S}$; see \Cref{f:mftestfunc1} for snapshots.

\begin{figure}
    \centering
    \includegraphics[width=1\textwidth]{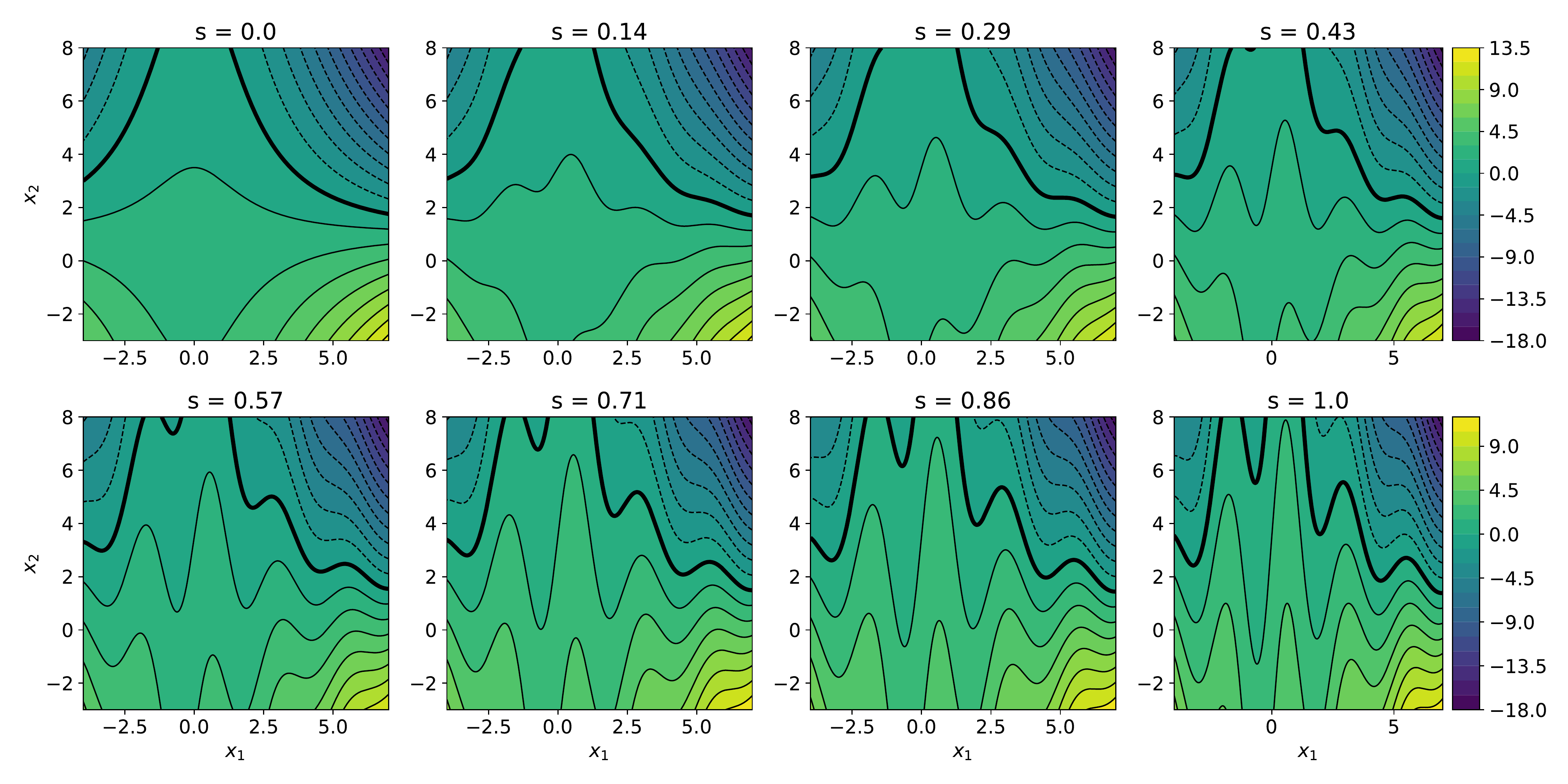}
    \caption{Multifidelity multimodal function; the thick black line represents the limit state level set $g(f(\x, \cdot)) = 0$.}
    \label{f:mftestfunc1}
\end{figure}

\subsubsection{2D multifidelity four-branches function}
A system with four distinct component limit-states \cite{schobi2017rare} is a commonly used test problem in reliability analysis to demonstrate the efficacy of the algorithms in the presence of multiple failure regions. Here, in contrast to \eqref{e:mf_multimodal}, we introduce fidelity by making a coordinate transformation as $x_i = x'_i - a_0 \times s$, where $a_0 \in \mbb{R}$. The response function can be written as
\begin{align}
    f({\x}, s) = \textrm{min} \begin{cases} 3 + 0.1(x'_1 -x'_2)^2 - \frac{x'_1 + x'_2}{\sqrt{2}}\,, \\
    3 + 0.1(x'_1 - x'_2)^2 + \frac{x'_1 + x'_2}{\sqrt{2}}\,, \\
    x'_1 - x'_2 + \frac{7}{\sqrt{2}}\,, \\
    x'_2 - x'_1 + \frac{7}{\sqrt{2}}.
    \end{cases}\\
    \T{and}~x'_i = x_i - 5.0 \times s,~i=1,2,3,
\end{align}
where $\x = [x_1, x_2]$ and $\X = [-8,8]^2$. The multifidelity extension we provide offers a translation of the level sets across fidelities, where we set $a_0 = -5.0$, and is shown in \Cref{f:mf_fourbranches}.
\begin{figure}[htb!]
    \centering
    \includegraphics[width=1\textwidth]{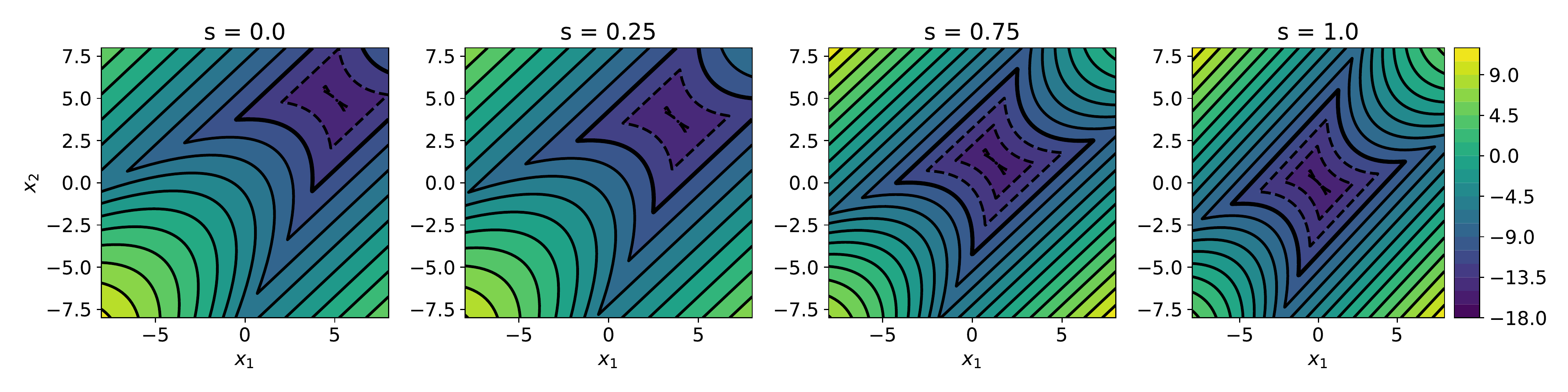}
    \caption{Multifidelity four-branches test function. The thick black line represents the failure boundary $g(f(\x)) = 0$.}
    \label{f:mf_fourbranches}
\end{figure}

\subsubsection{3D Multifidelity Ishigami function}
We modify the 3D Ishigami test function \cite{ishigami1990importance}, similar to the four-branches function, by introducing a shift in the variables using the fidelity parameter $s$, as shown in \eqref{e:mf_ishigami}. Note that the input $\x$ is normalized to be in $[0,1]^3$ before introducing the translation: 
\begin{equation}
    f(\x, s) = \sin(x_1 - s) + 7.0 \sin^2(x_2 - s) + 0.1 x_3 ^4  \sin(x_1 - s),
    \label{e:mf_ishigami}
\end{equation}
where $\x = [x_1, x_2, x_3]$ and $\X=[-\pi, \pi]^3$. The resulting function is shown for two slices $x_3=-\pi$ and $x_3=0$ in \Cref{f:mf_ishigami}.
\begin{figure}
    \centering
    \includegraphics[width=1\textwidth]{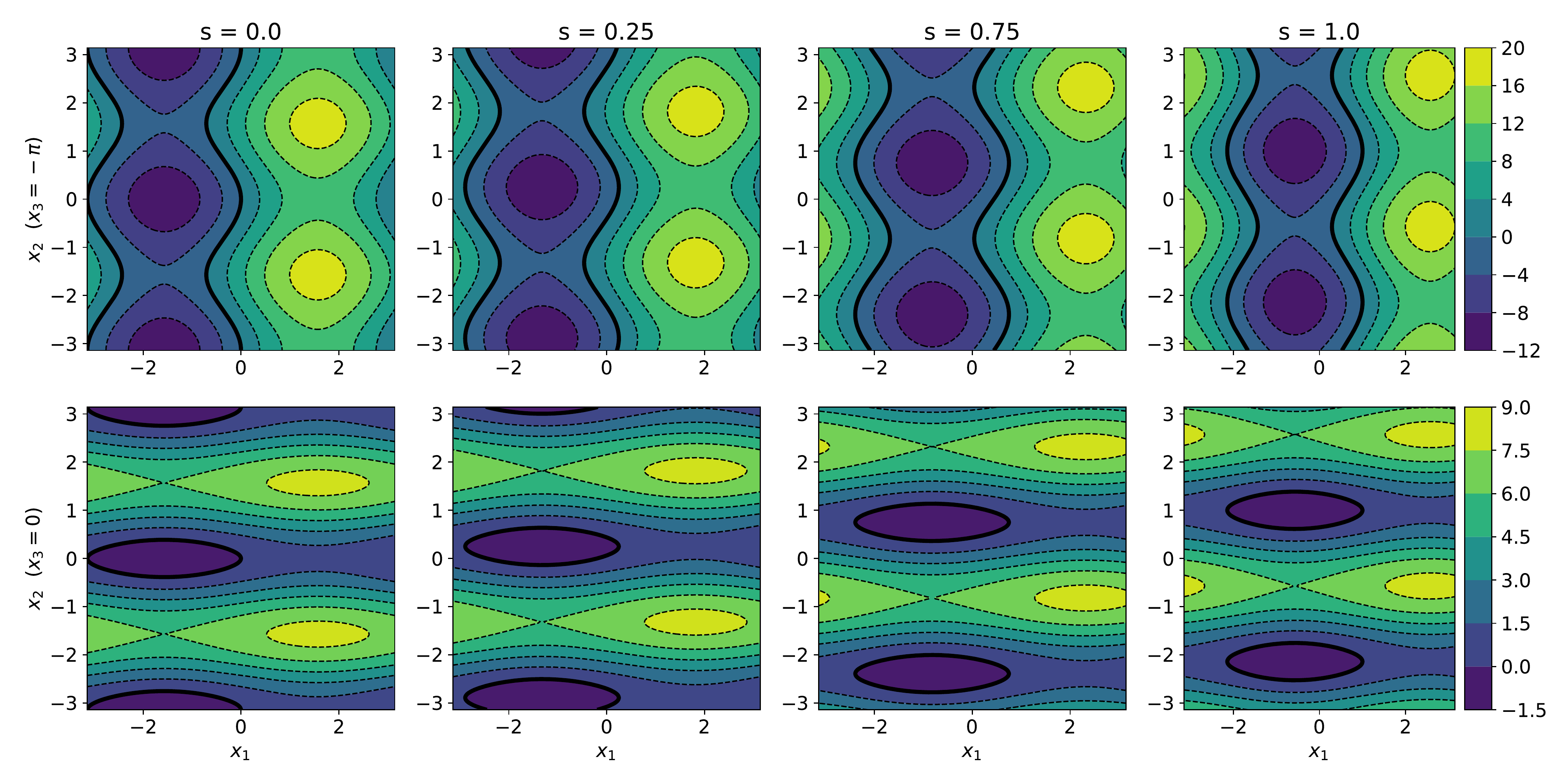}
    \caption{Multifidelity extension of the Ishigami~\cite{ishigami1990importance} test function.}
    \label{f:mf_ishigami}
\end{figure}

\subsubsection{6D Hartmann function}
\label{ss:6d_hartmann}
For a higher-dimensional test function, we use the          ``augmented" Hartmann 6D function introduced in \cite{balandat2019botorch} and restated below:
\begin{equation}
    f(\x, s) = -\left(\beta_1 - 0.1 * (1-s)\right) * \exp \left(- \sum_{j=1}^6 A_{1j} (x_j - P_{1j})^2 \right) -
            \sum_{i=2}^4 \beta_i \exp\left( - \sum_{j=1}^6 A_{ij} (x_j - P_{ij})^ 2\right),
    \label{e:mf_hartmann}
\end{equation}
where $\x = [x_1, \ldots, x_6]$ and $\X = [0,1]^6$ and
\[ 
A = \begin{bmatrix}
10 & 3 & 17 & 3.5 & 1.7 & 8 \\
0.05 & 10 & 17 & 0.1 & 8 & 14 \\
3 & 3.5 & 1.7 & 10 & 17 & 8 \\
17 & 8 & 0.05 & 10 & 0.1 & 14
\end{bmatrix}
\]
and
\[ 
P = \begin{bmatrix}
1312 & 1696 & 5569 & 124  & 8283 & 5886 \\
2329 & 4135 & 8307 & 3736 & 1004 & 9991 \\
2348 & 1451 & 3522 & 2883 & 3047 & 6650 \\
4047 & 8828 & 8732 & 5743 & 1091 & 381
\end{bmatrix}.
\]
For the 2D test functions previously discussed, the failure threshold was set as $a = 0.$. For the Ishigami and Hartmann functions, we show the histogram of the output at $s=1$ along with the failure thresholds (vertical lines), which are set to $a=-9.0$ and $a=-2.0$, respectively, in \Cref{fig:hist}; these thresholds were chosen particularly to make the failure probabilities on the order of $1\%$. 

\begin{figure}
    \centering
        \begin{subfigure}{.5\textwidth}
           \includegraphics[width=1\linewidth]{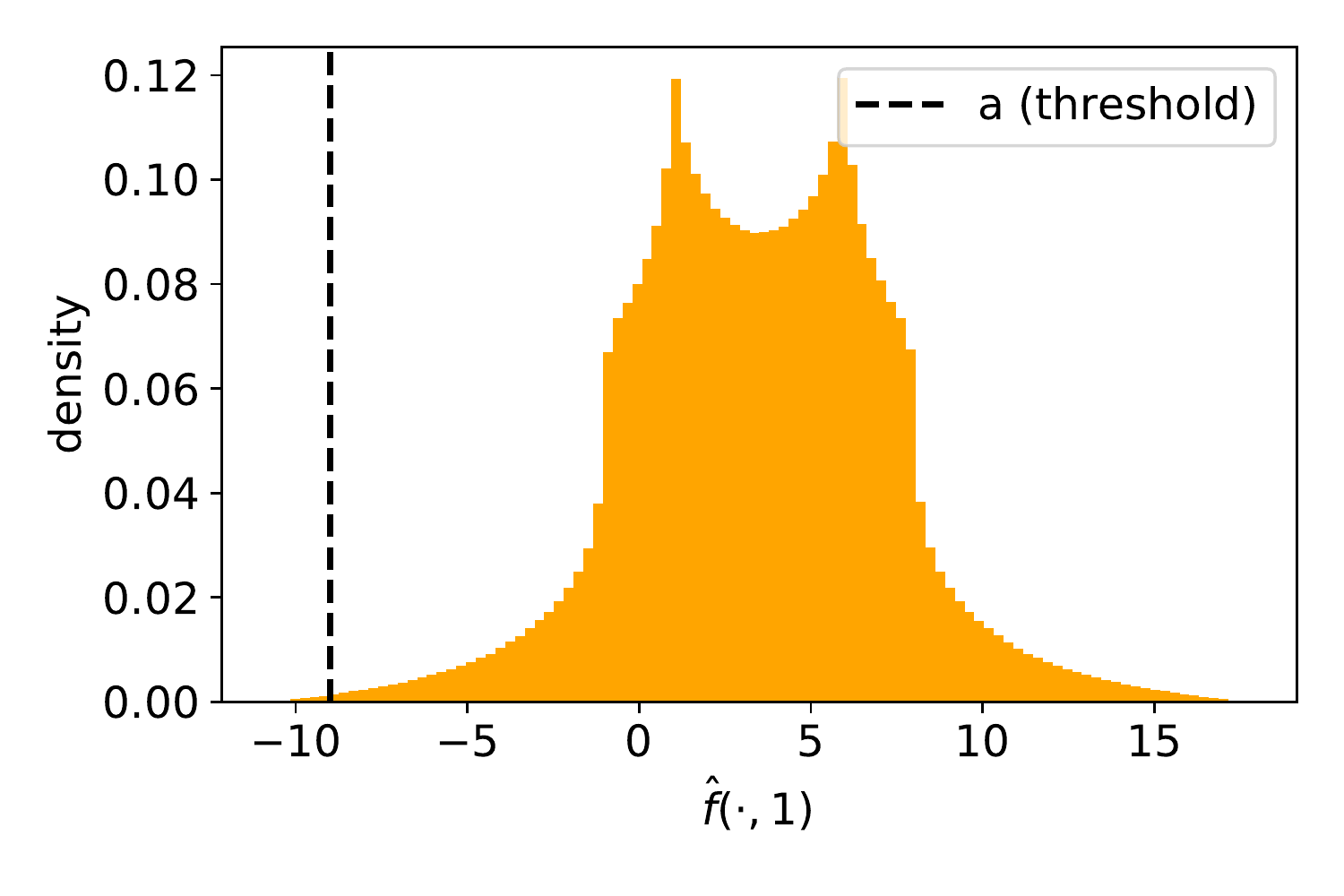}
           \caption{Ishigami}
        \end{subfigure}%
        \begin{subfigure}{.5\textwidth}
           \includegraphics[width=1\linewidth]{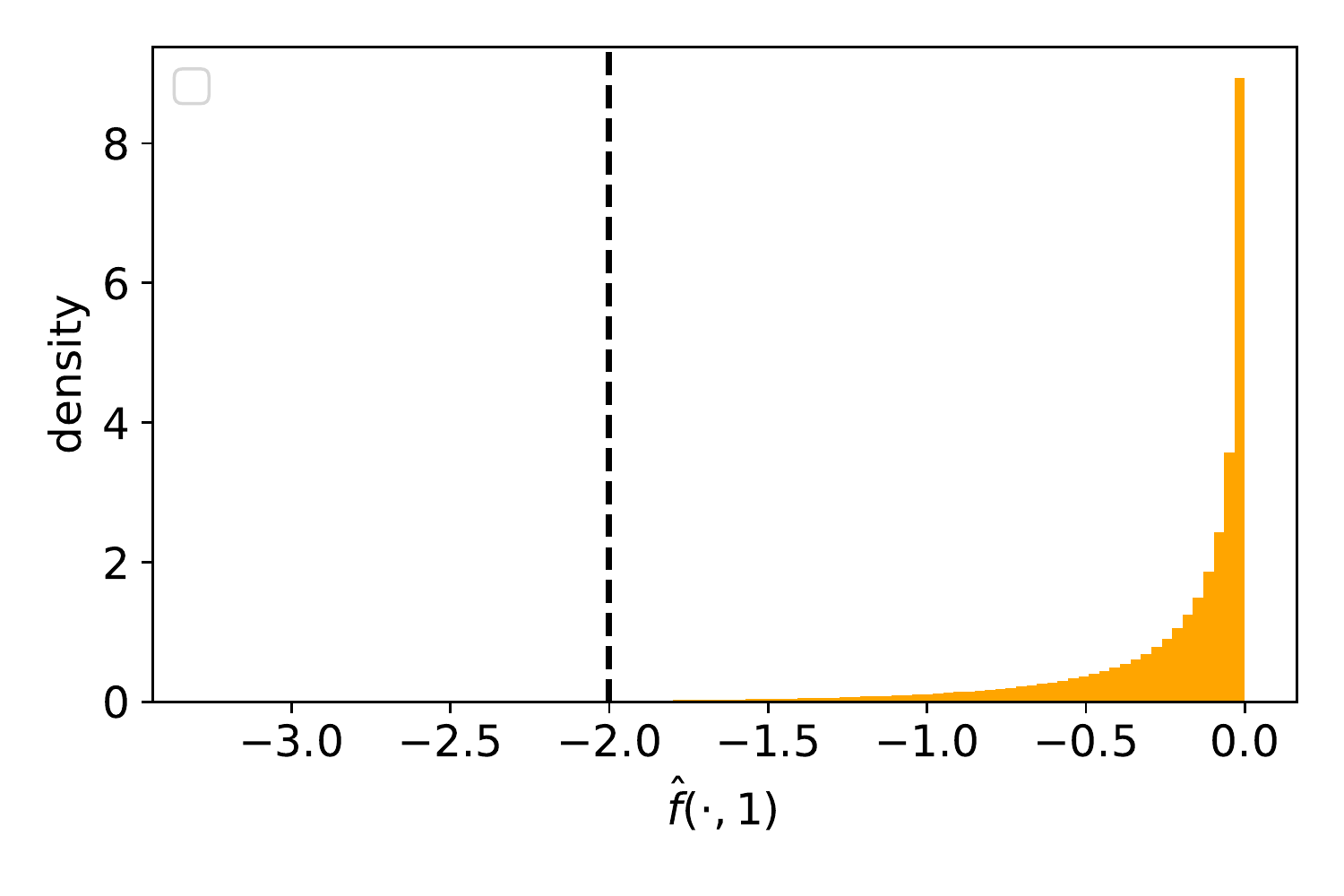}
           \caption{Hartmann}
        \end{subfigure}
    \caption{Histograms of the high-fidelity output $\hat{f}(\cdot, 1)$ of the multifidelity 3D Ishigami (left) and 6D Hartmann test functions. Vertical lines indicate failure thresholds.}
    \label{fig:hist}
\end{figure}

The results of the 2D test functions are shown in terms of the predicted contours in \Cref{f:multimodal_contours} and \Cref{f:fourbranches_contours}; in both figures, whereas the true level set is plotted in the red dashed-dotted line, the background (filled contours) shows the final GP posterior mean $\mu_q(\x, 1)$ with the predicted level set as the thick black line. We hope that readers are able to visually compare the predictions against the true failure boundary. Note that in both figures we show only the predictions of one of the twenty random repetitions. Whereas for the multimodal function the predictions from the single- and multifidelity approaches are  similar, the multifidelity achieves this prediction with a computational cost that is more than 45\% cheaper. For the four-branches test function, the prediction of the multifidelity approach is visually much better than that of the single-fidelity approach, albeit using only half the computational cost.

We use the contours to provide a visual interpretation of the results, since they are feasible for the 2D cases.  For a quantitative evaluation, however, we also show the error in computing the failure probability. 
The absolute errors in computing the failure volume are shown in \Cref{fig:synthetic} for all the synthetic test functions. We run both the single- and multifidelity experiments for $100$ iterations after providing them with seed samples, as previously mentioned. 
In all four cases, we observe that the multifidelity approach is able to predict the failure volume with greater accuracy compared with the single-fidelity approach, for a certain computational budget. \mg{In order to establish the utility of the adaptive nature of our proposed algorithm, we also compare our predictions with a ``non-adaptive'' Latin hypercube design. In \Cref{fig:synthetic}, we show the predictions from a non-adaptive design (horizontal dashed line) with a total budget of the $n$ seed points plus the 100 used in each experiment. This way, we keep the budget consistent with each experiment and repeat this 20 times to randomize the selection of the Latin hypercube points; \Cref{fig:synthetic} shows the average of the repetitions. We observe that the non-adaptive design leads to a worse prediction of the failure probability compared to \texttt{CAMERA} for the same budget. This is intuitive because a (space-filling) Latin hypercube design may provide good coverage of $\X$ but can fail to resolve the failure boundary which is critical to fitting a good biasing density and hence to a good estimate of the failure probability. On the other hand, \texttt{CAMERA} attempts to adaptively select points that would best predict the failure boundary at the highest fidelity level.}

\mg{In addition to the FPE, we are interested in observing how the \texttt{CAMERA} algorithm balances the trade-off between accuracy and cost across the fidelity space. In this regard, we show the distribution of the number of queries to models at each fidelity level, across all the 100 adaptive selections, in \Cref{fig:synthetic_queries}, where all the 20 repetitions are overlaid on top of each other. We observe that when there is high correlation between the low and high-fidelity models, then the algorithm exploits this to obtain most samples from the lowest fidelity model, e.g., Multimodal, Four branches, and Ishigami functions. For these test functions, only a few queries at the intermediate fidelities were necessary. Most of the evaluations are at $s=0$ (lowest fidelity) and $s=1$ (highest fidelity). In the case of the Ishigami test function, which also has the rarest failure probability, the algorithm has most queries at $s=1$, with queries at intermediate fidelity levels almost uniformly distributed. Due to the lack of sufficient correlation between the high and low fidelity levels, the algorithm is unable to exploit the cheaper information as in the other experiments. However, notice that across all the experiments, the highest fidelity model is queried only about 25\% of the time. When sufficient correlations exist, the lowest fidelity model is queried 50-75\% of the time. This demonstrates the efficiency of the method in exploiting correlations between models at multiple fidelities. Finally, notice that the distribution of the number of queries is roughly consistent across all the repetitions. This demonstrates the robustness of our algorithm amidst the randomness of the seed point selection.}

\begin{figure}
    \centering
    \begin{subfigure}{.5\textwidth}
        \includegraphics[width=1\linewidth]{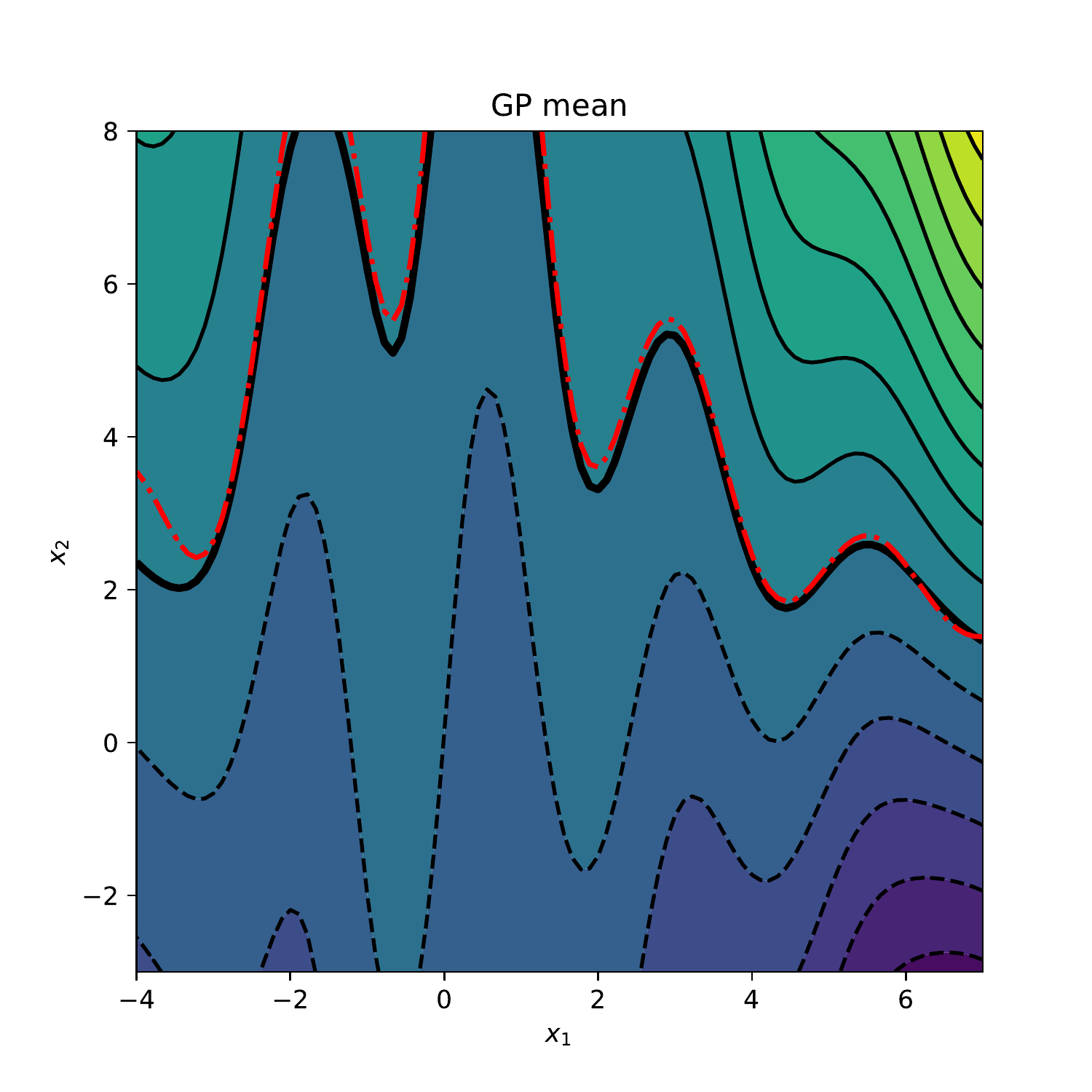}    
        \caption{Single-fidelity prediction}
    \end{subfigure}%
    \begin{subfigure}{.5\textwidth}
        \includegraphics[width=1\linewidth]{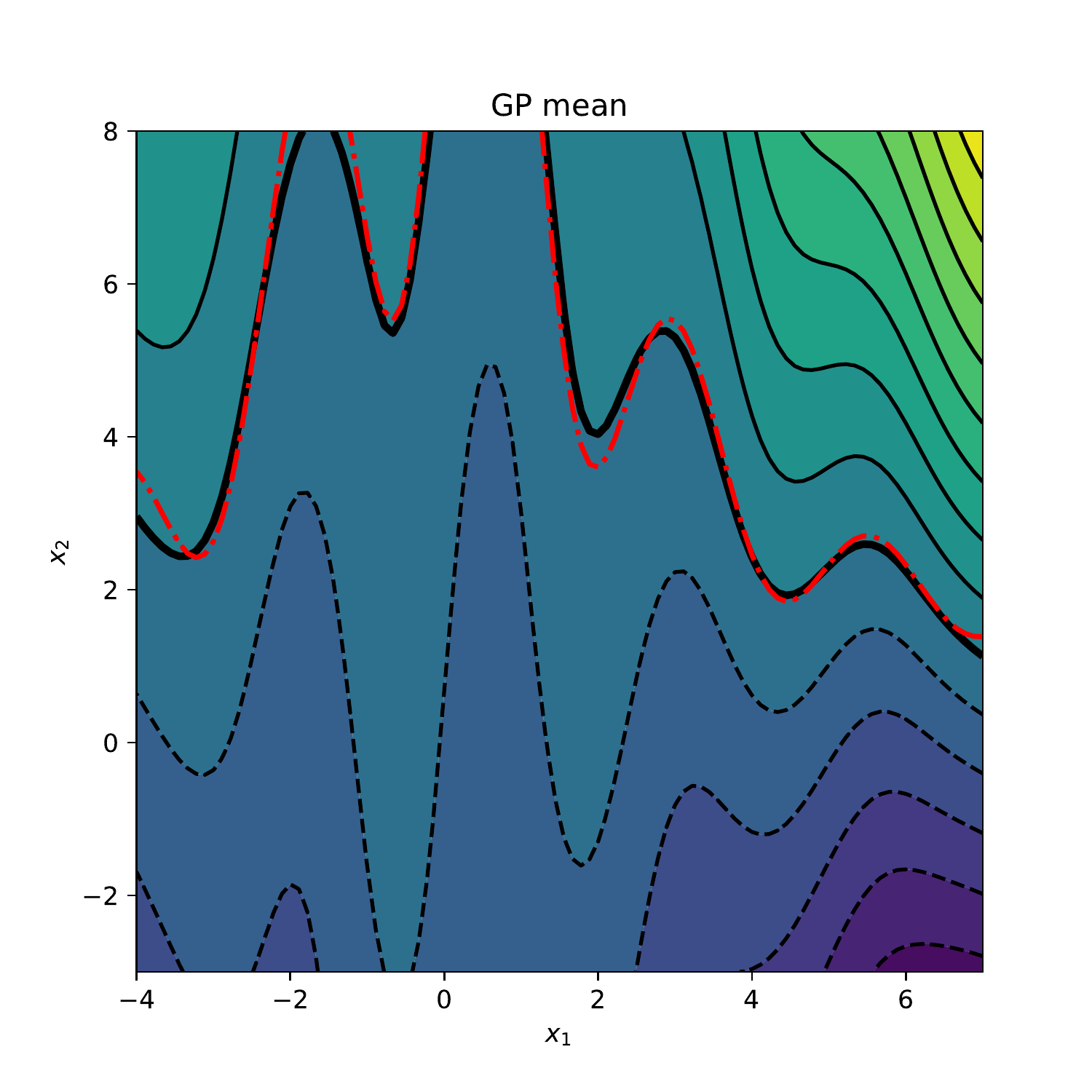}
        \caption{Multifidelity prediction}
    \end{subfigure}
    
    \caption{Results of the 2D multimodal test function; red dash-dotted line represents the true level set. Cost for multifidelity prediction is $30,010$ and for single-fidelity prediction $55,000$. Both figures share the same colormap.}
    \label{f:multimodal_contours}
\end{figure}

\begin{figure}
    \centering
    \begin{subfigure}{.5\textwidth}
        \includegraphics[width=1\linewidth]{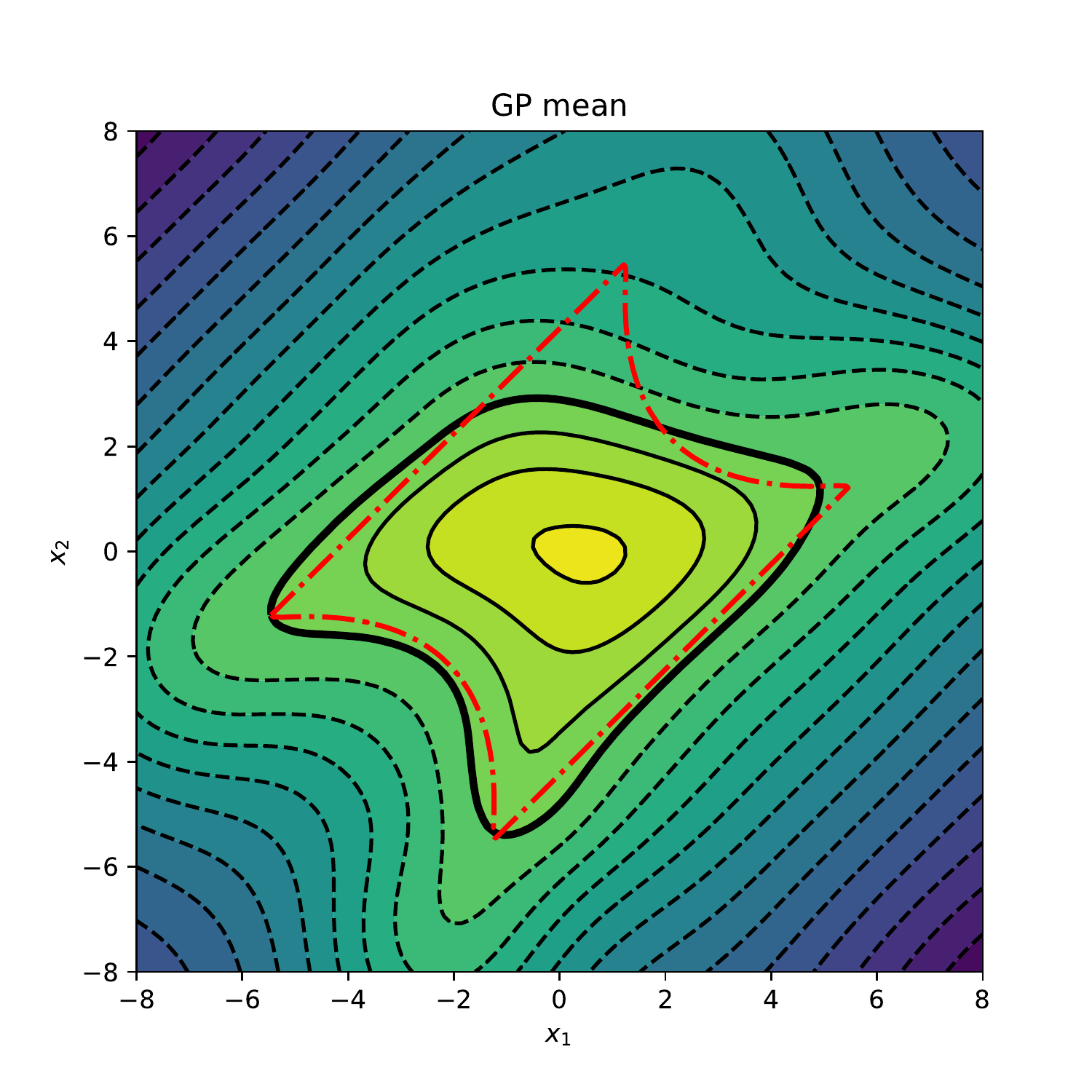}    
        \caption{Single-fidelity prediction}
    \end{subfigure}%
    \begin{subfigure}{.5\textwidth}
        \includegraphics[width=1\linewidth]{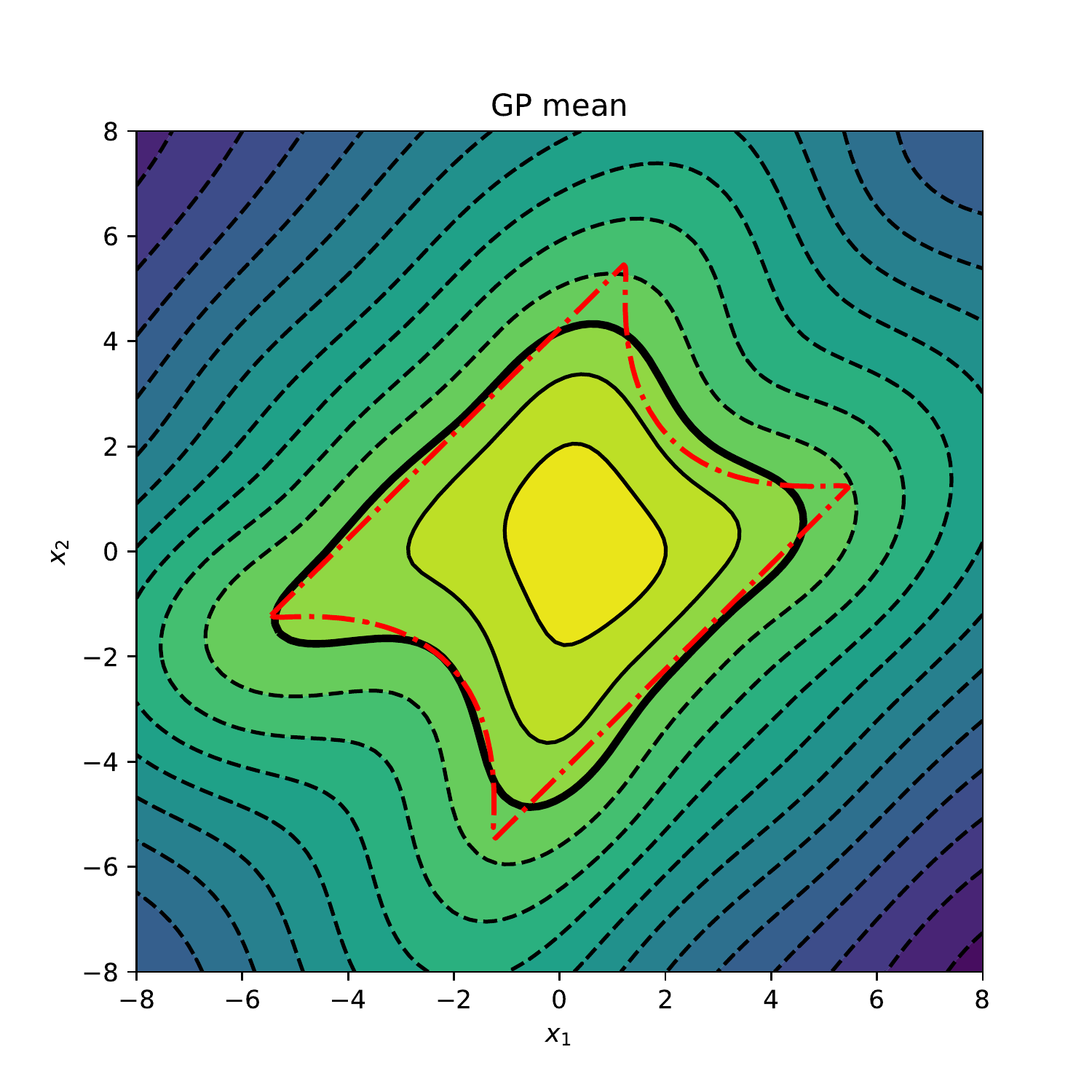}
        \caption{Multifidelity prediction}
    \end{subfigure}
    
    \caption{Results of the 2D four-branches test function; red dash-dotted line represents the true level set. Cost for multifidelity prediction is $30,005$ and for single-fidelity prediction $55,000$. Both figures share the same colormap.}
    \label{f:fourbranches_contours}
\end{figure}

\begin{figure}
    \centering
    \begin{subfigure}{.5\textwidth}
        \includegraphics[width=1\linewidth]{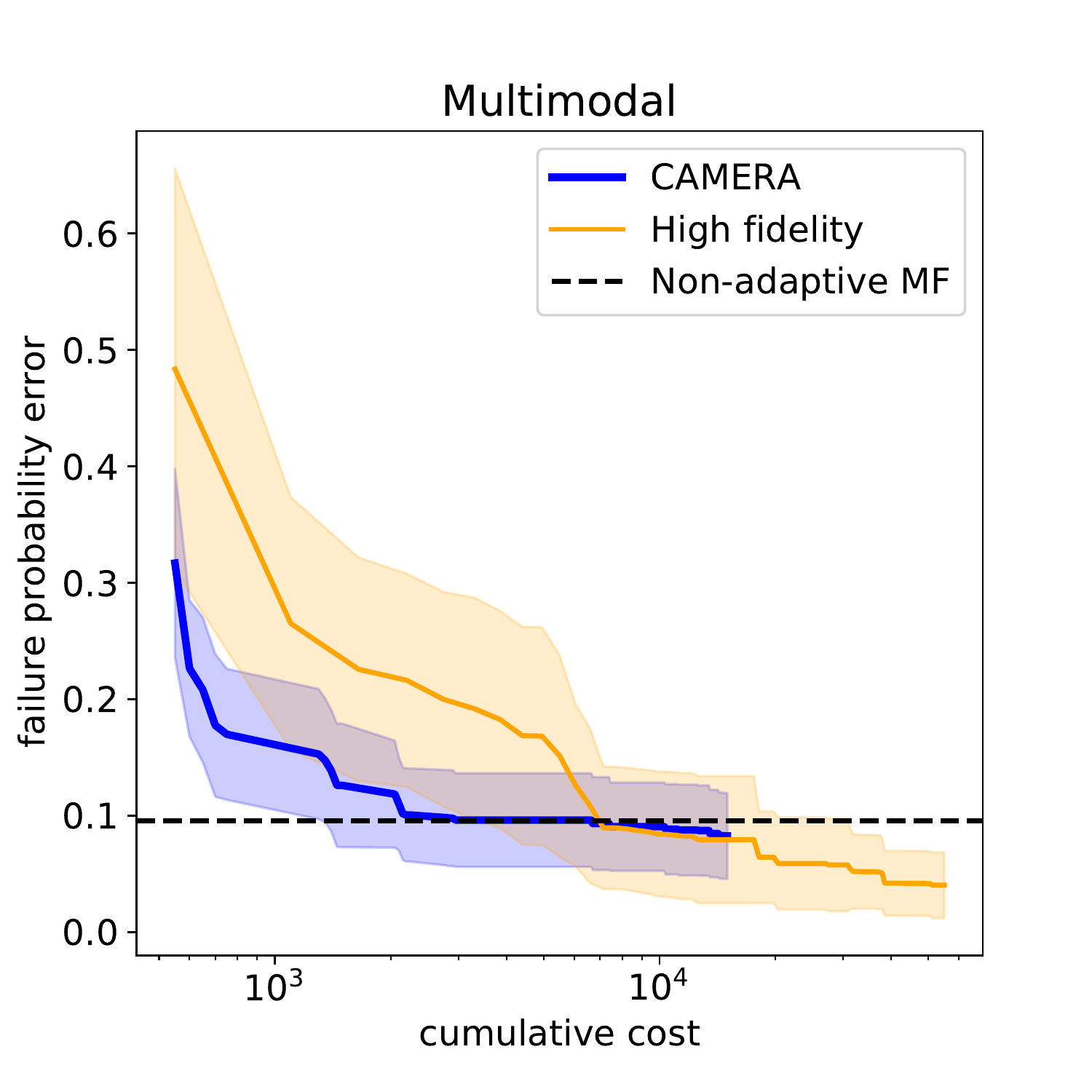}    
        \caption{$a=0.0$, $p_\mcl{F} = 0.30215$}
    \end{subfigure}%
    \begin{subfigure}{.5\textwidth}
        \includegraphics[width=1\linewidth]{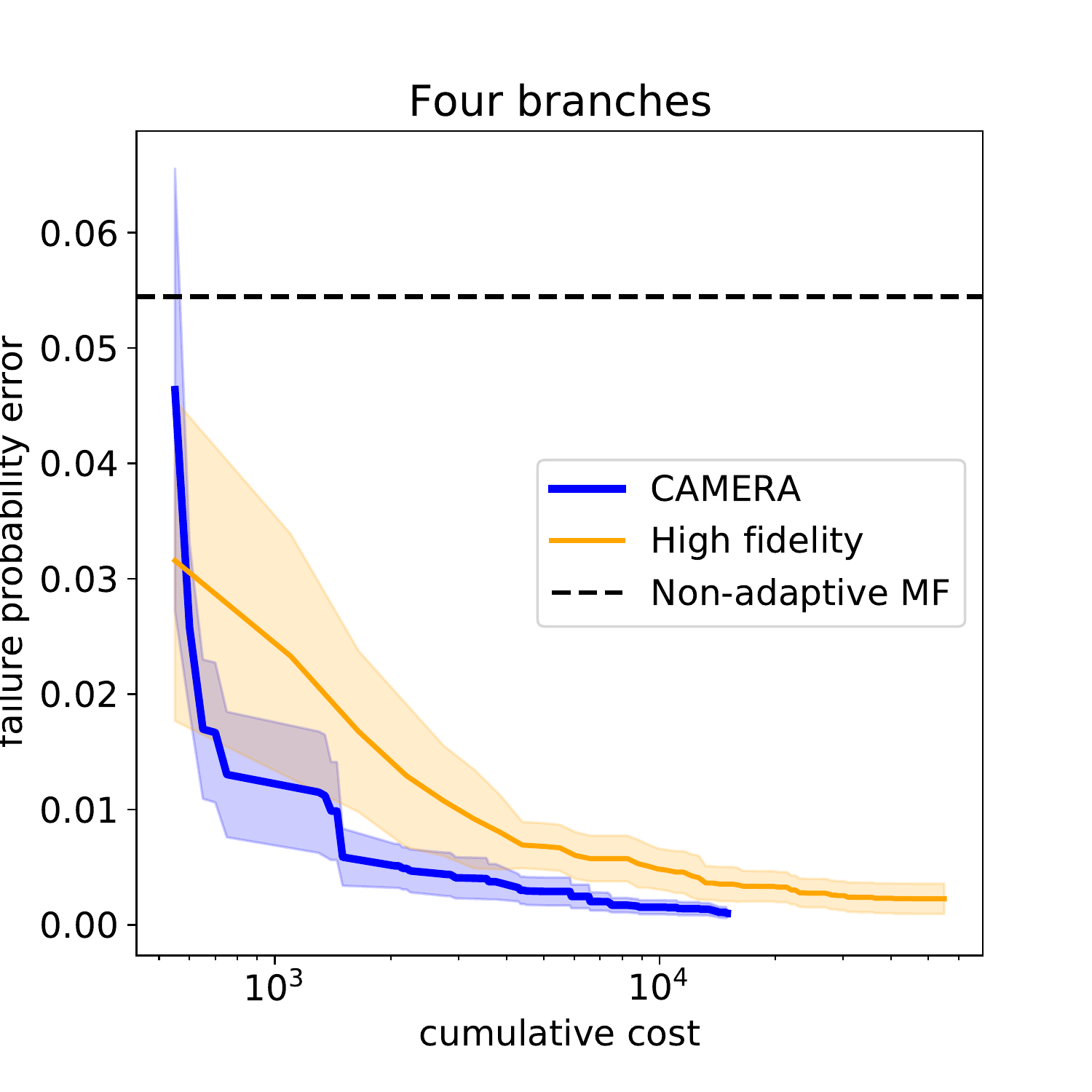}    
        \caption{$a=0.0$, $p_\mcl{F} = 0.1689$}
    \end{subfigure}\\
    \begin{subfigure}{.5\textwidth}
        \includegraphics[width=1\linewidth]{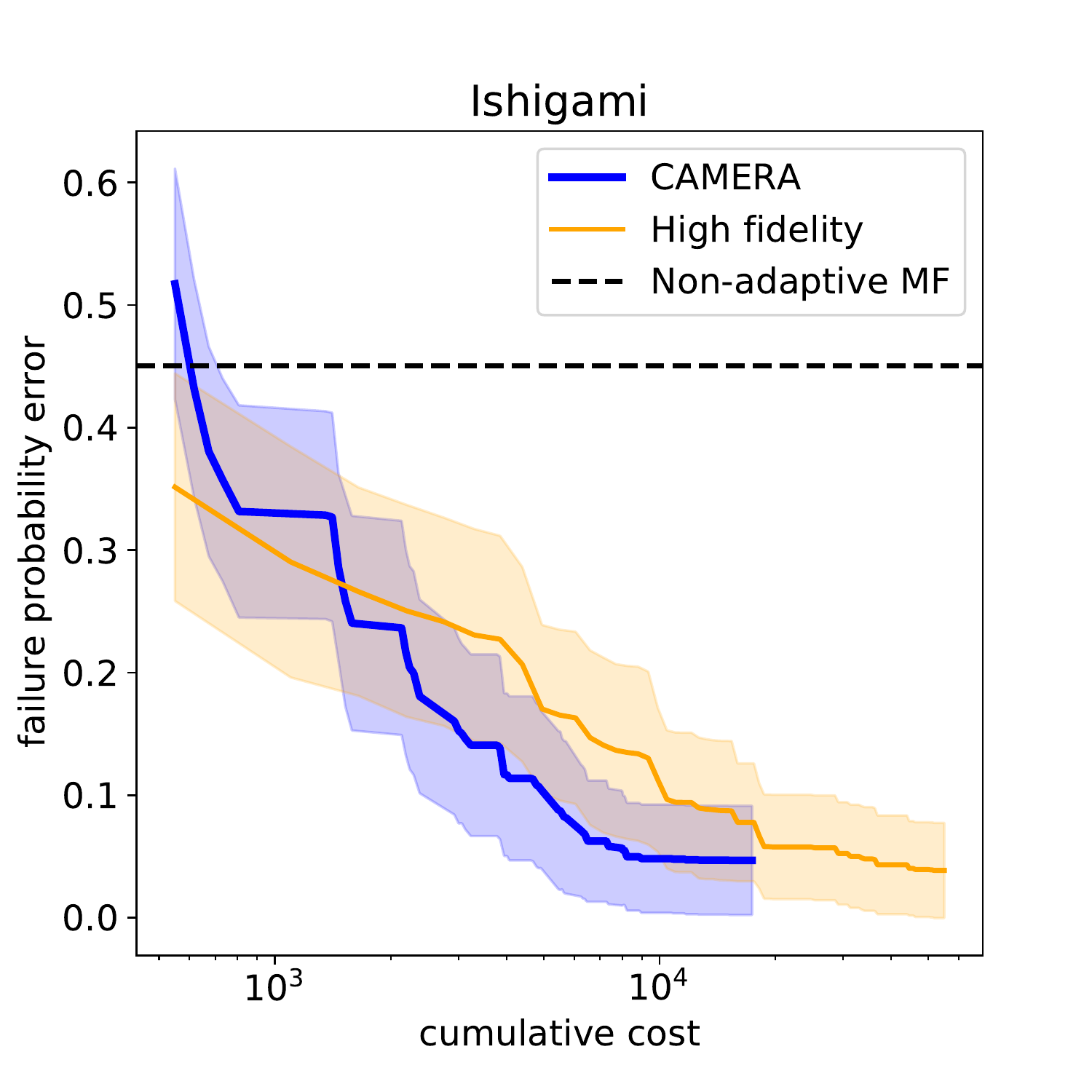}    
        \caption{$a=-9.0$, $p_\mcl{F} = 0.0011$}
    \end{subfigure}%
    \begin{subfigure}{.5\textwidth}
        \includegraphics[width=1\linewidth]{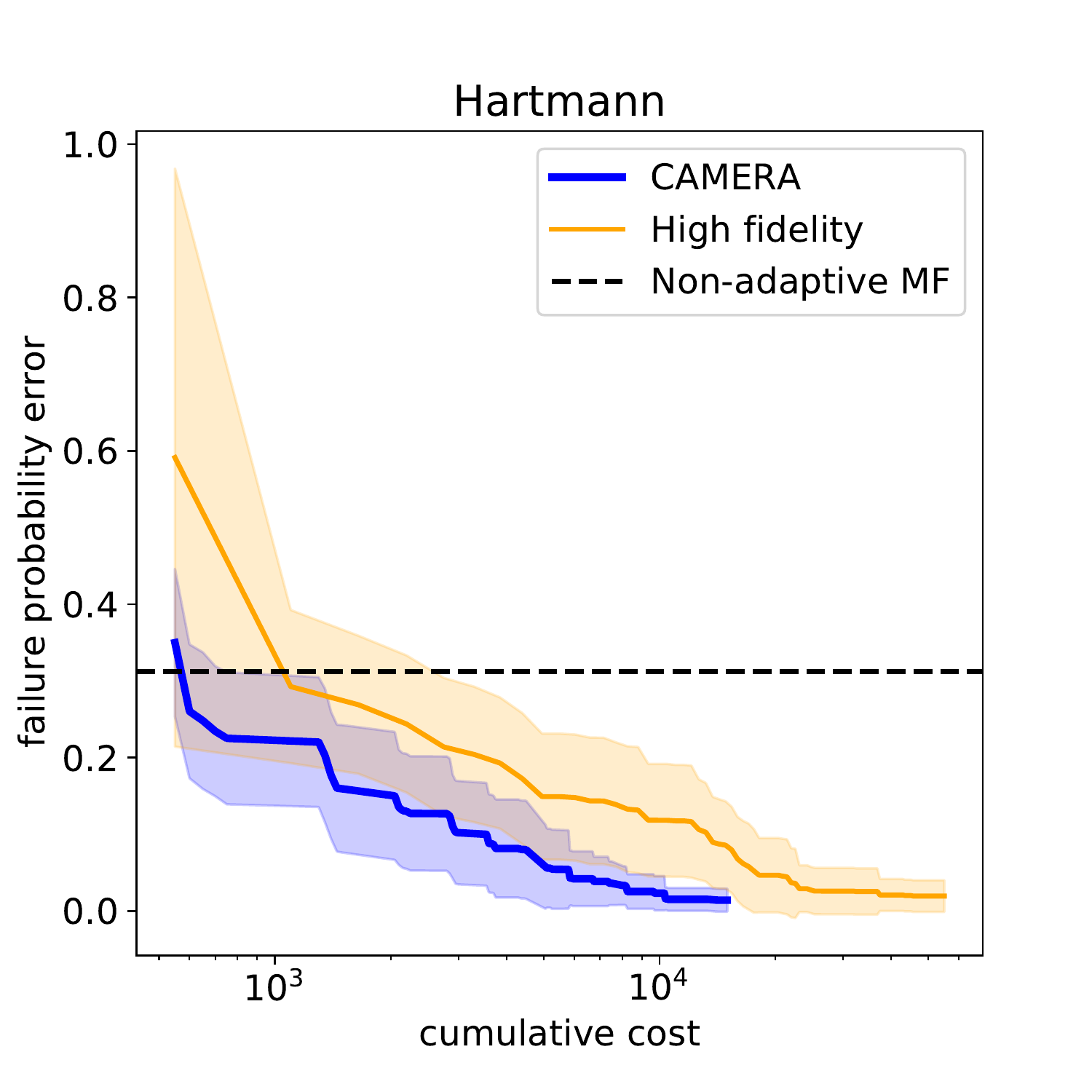}    
        \caption{$a=-2.0$, $p_\mcl{F} = 0.00737$}
    \end{subfigure}    
    \caption{Error in computing the failure probability for the synthetic test functions.}
    \label{fig:synthetic}
\end{figure}

\begin{figure}
    \centering
    \begin{subfigure}{.5\textwidth}
        \includegraphics[width=1\linewidth]{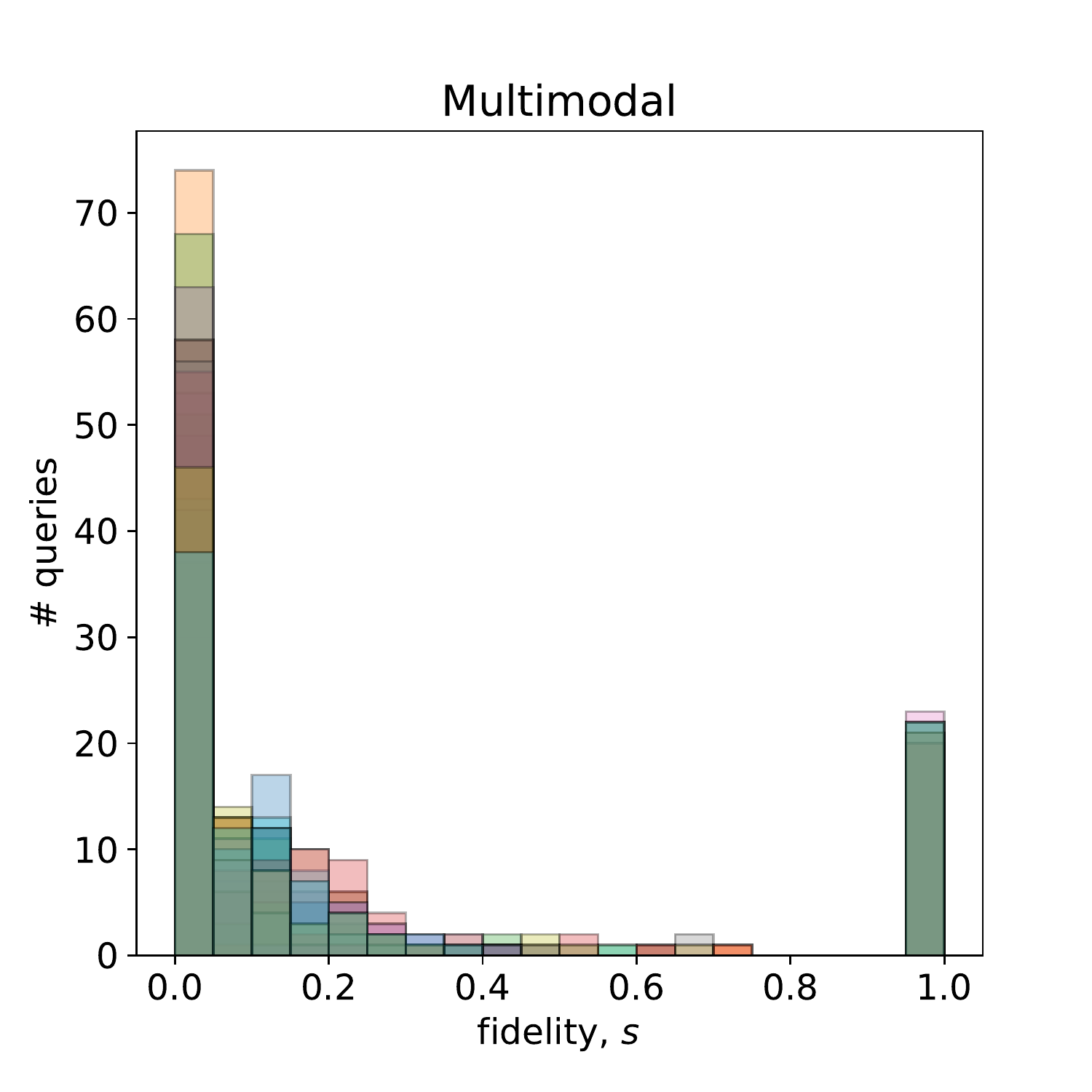}    
        \caption{$a=0.0$, $p_\mcl{F} = 0.30215$}
    \end{subfigure}%
    \begin{subfigure}{.5\textwidth}
        \includegraphics[width=1\linewidth]{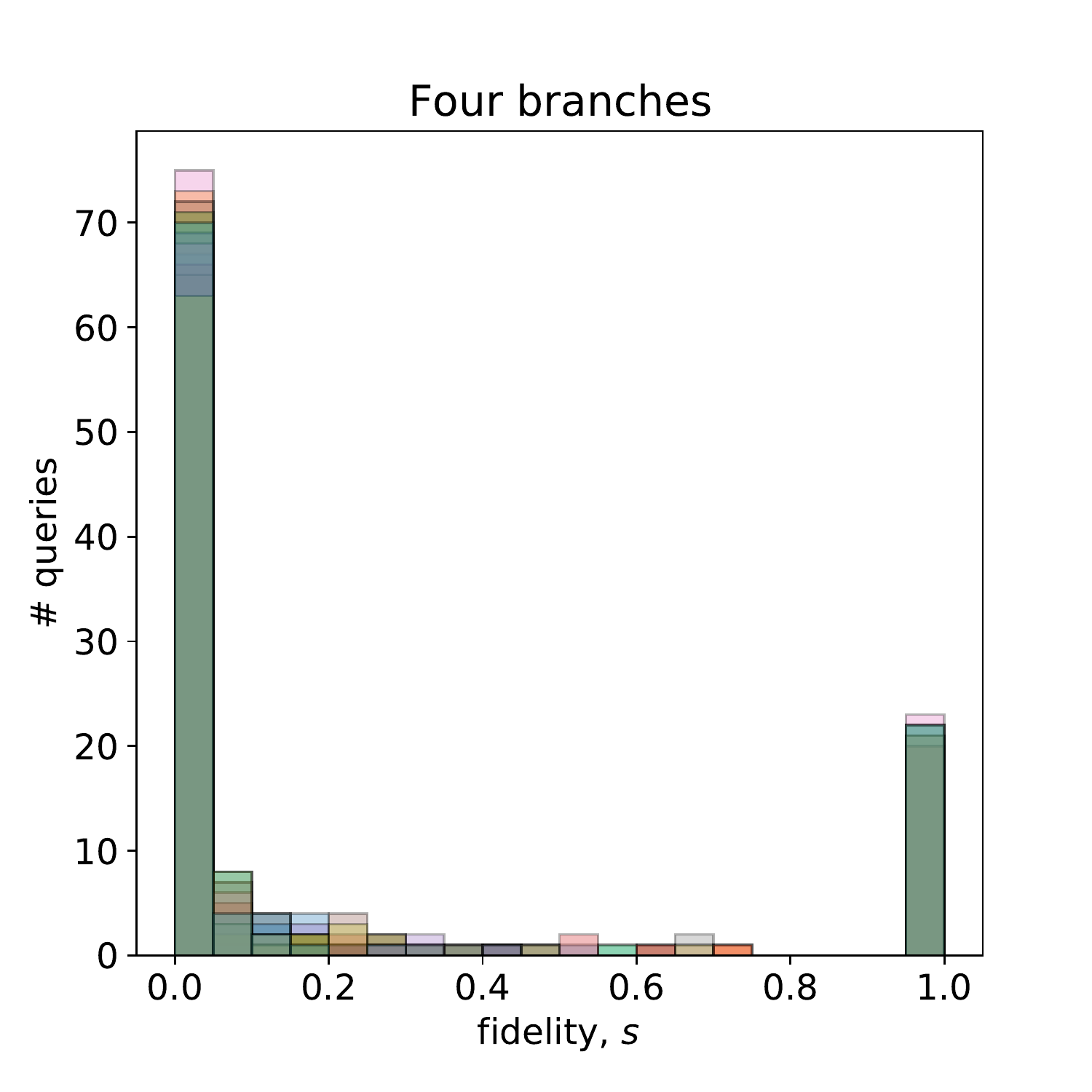}    
        \caption{$a=0.0$, $p_\mcl{F} = 0.1689$}
    \end{subfigure}\\
    \begin{subfigure}{.5\textwidth}
        \includegraphics[width=1\linewidth]{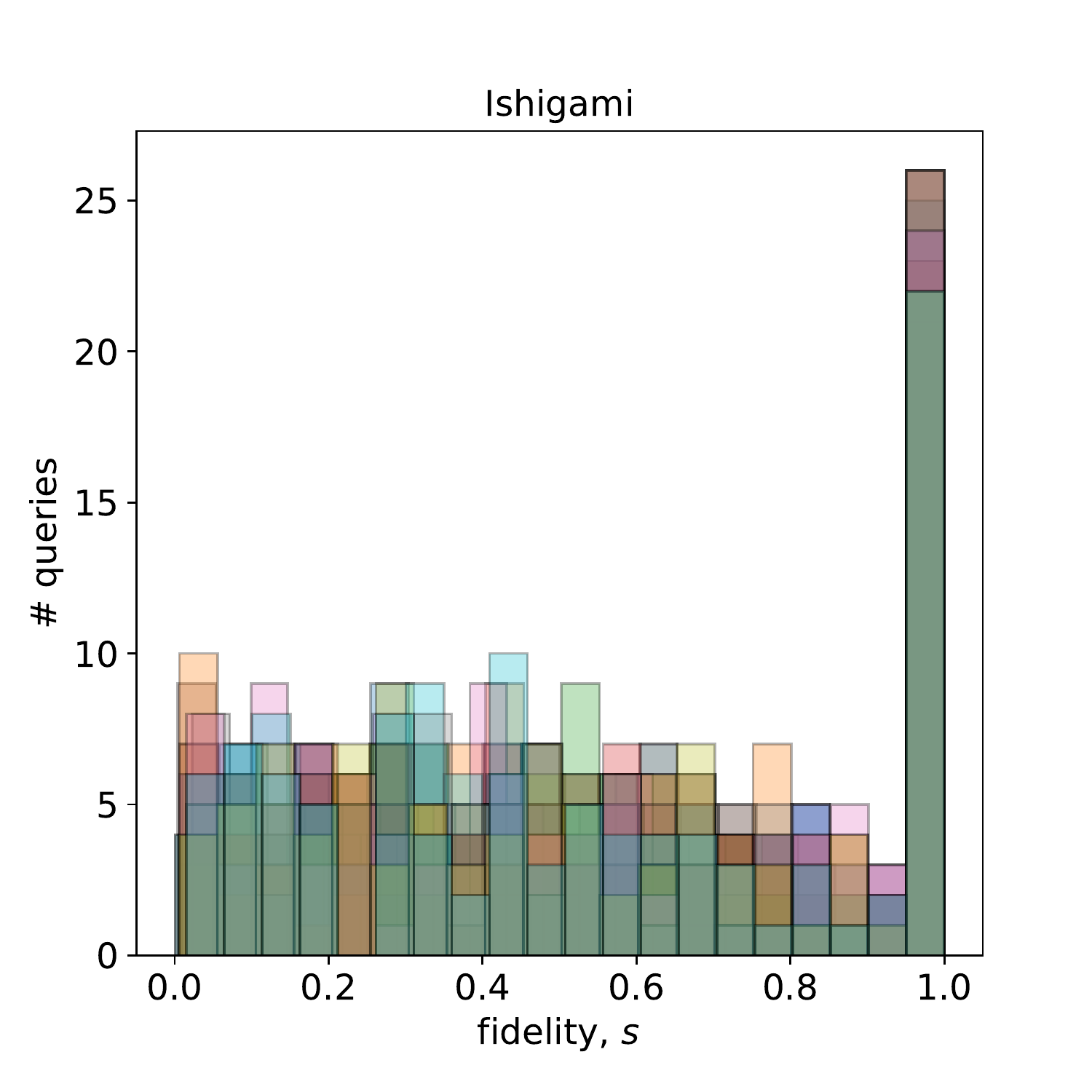}    
        \caption{$a=-9.0$, $p_\mcl{F} = 0.0011$}
    \end{subfigure}%
    \begin{subfigure}{.5\textwidth}
        \includegraphics[width=1\linewidth]{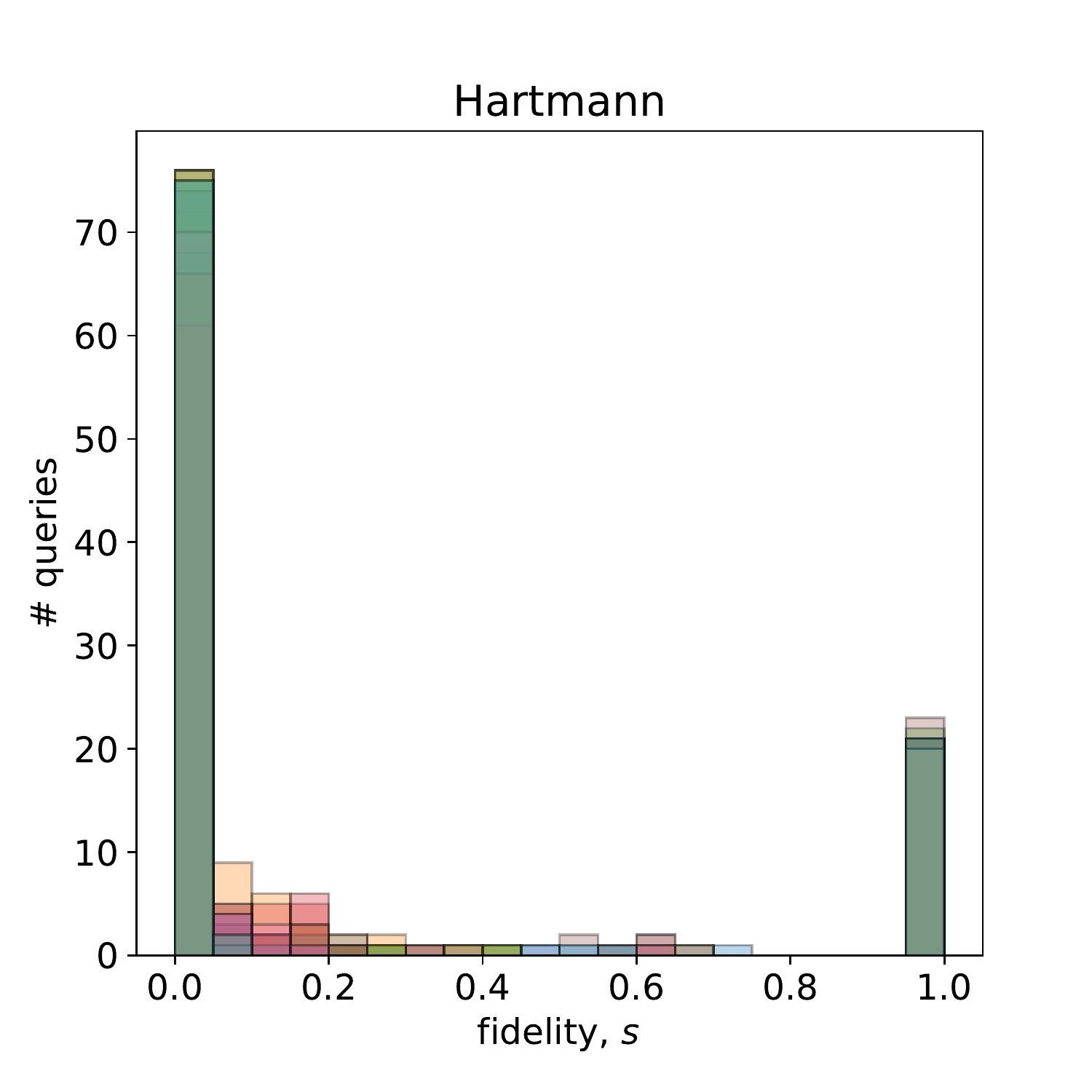}    
        \caption{$a=-2.0$, $p_\mcl{F} = 0.00737$}
    \end{subfigure}    
    \caption{\mg{Number of model queries versus fidelity for the synthetic test functions. Plots show the 20 repetitions overlaid on top of each other.}}
    \label{fig:synthetic_queries}
\end{figure}

\subsection{Turbine reliability analysis with discrete fidelity space}
\label{ss:expts_turbine}

We now demonstrate our method on the structural analysis of a gas turbine blade at steady-state operating conditions. Gas turbines have a radial arrangement of blades downstream of the combustor. Hence, these blades are subject to very high pressures and temperatures that can cause concentrated zones of high stress that can in turn lead to structural deformation and potential catastrophic damage of the engine parts. Therefore, we are interested in the probability that the maximum stress acting on the thermal blade  exceeds a certain threshold. \mg{The variables are defined as follows.}

\begin{equation*}
    \begin{split}
        x_1:& \T{ Pressure-side loading} \\
        x_2:& \T{ Suction-side loading} \\
        s :& \T{ mesh resolution},
    \end{split}
\end{equation*}
In this regard we vary the pressure-side and suction-side pressures as boundary conditions, as shown above, while ignoring thermal effects, to observe the maximum von Mises stress acting on the blade; that is, $f(\x) = \max_{\mbf{z}} u(\mbf{z})$, where $u(\mbf{z}),~\mbf{z} \in \partial \mbf{z}$ is the Von Mises stress distribution on the blade surface $\partial \mbf{z}$. Note that we negate the output,  $f(\x) \times -1$, to comply with our definition of failure. The material properties are fixed to the following constants: Young's modulus $227E+9$ Pa, Poisson's ratio $0.27$, and coefficient of thermal expansion  $12.7E-6$. The governing equations are solved using a finite element method  using Matlab's PDE toolbox. The computational mesh density is determined via a minimum element size which also serves as the fidelity parameter. Specifically, we use the minimum mesh element sizes $\{0.05, 0.025, 0.01\}$, which correspond to fidelities $\mcl{S} = \{0.0, 0.5, 1.0\}$, respectively; note that this case corresponds to a discrete fidelity space. The mesh resolutions for the specified three fidelity levels are shown in \Cref{fig:turbine_fidelities}. As previously mentioned, the maximum von Mises stress is the quantity of interest in this experiment; see \Cref{fig:turbine_solution} for a couple of snapshots of the von Mises stress distribution \mg{and blade deformation} corresponding to two different pressure loadings. 

The suction-side $x_1$ and pressure-side $x_2$ pressures are varied in the following ranges: $x_1 \in [201160, 698390],~ x_2 \in [301460, 598020]$. A total of $n=40$ solution snapshots are generated and supplied as seed points for the multifidelity run; the equivalent $n$ for the single-fidelity run is determined to match the multifidelity seed budget. Both experiments are then run for 100 iterations and are repeated 5 times with randomized seed points. 

The contour plots of maximum von Mises stress are shown in \Cref{fig:turbine_mf_contours} for each of the three fidelities considered. The failure boundary is set as the level set corresponding to $95 \%$ of the maximum von Mises stress across a total of 150 high-fidelity simulations that were computed offline. This is shown by the  thick black line in \Cref{fig:turbine_mf_contours} (far right) and corresponds to a probability of failure of $0.004236$. Notice that for the low-fidelity case $(s=0.0)$ this level set is outside the domain of $\hat{f}(\x, 0.0)$ and for the medium-fidelity case $(s=0.5)$ this level set is at the top-left corner of the domain. Therefore, this test case poses a unique challenge of the fidelity levels leading to very different function domains; for example, according to the low-fidelity model, the probability of failure is $0$.

To evaluate our algorithm, we compute the predicted failure probability via IS at the end of each iteration and take the absolute difference from the true failure probability. This is plotted against the cumulative computational costs for the single- and multifidelity cases in \Cref{fig:turbine_reliability}. From the figure, notice that for any given cumulative cost, the multifidelity approach results in a failure probability error that is better than or equal to the single-fidelity approach. Because of the availability of only three discrete fidelity levels and the assumed cost model,  the discrepancies in the cost between low and high fidelities are very pronounced; the single fidelity curve starts at a cumulative cost of $\approx 550$ (which is the cost of a high-fidelity query) and the multifidelity curve starts at a cumulative cost of $\approx 50$ (which is the cost of the lowest-fidelity query). \mg{Notice that, for the chosen budget of evaluations, \texttt{CAMERA} converges to a slightly higher failure probability error than the high-fidelity; however, this discrepancy ($\mcl{O}(0.01\%)$) can be considered negligible.}

\mg{On the right of \Cref{fig:turbine_reliability}, we show the distribution of the number of queries at each fidelity level. Notice that the algorithm is able to exploit the lowest fidelity model while keeping the high-fidelity queries at $\sim 20\%$, owing to the strong correlations between the fidelity levels for this experiment. The consistency across repetitions is also present, similar to what was observed in the synthetic experiments.}

\begin{figure}[h!]
    \centering
    \begin{subfigure}{.34\textwidth}
        \centering
        \includegraphics[width=1\linewidth]{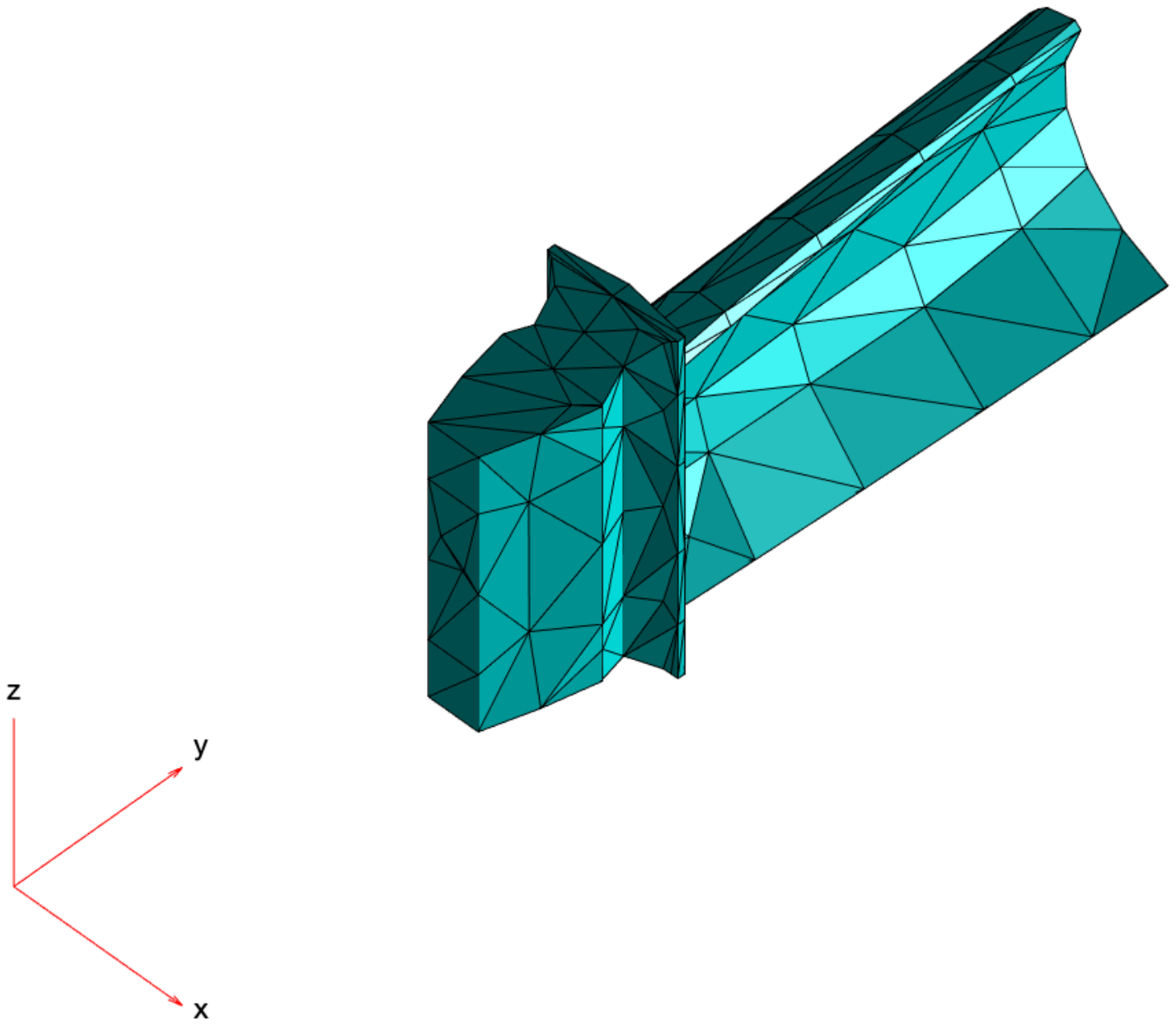}
        \caption{Low fidelity ($s=0.0$)}
    \end{subfigure}%
    \begin{subfigure}{.34\textwidth}
    \centering
        \includegraphics[width=1\linewidth]{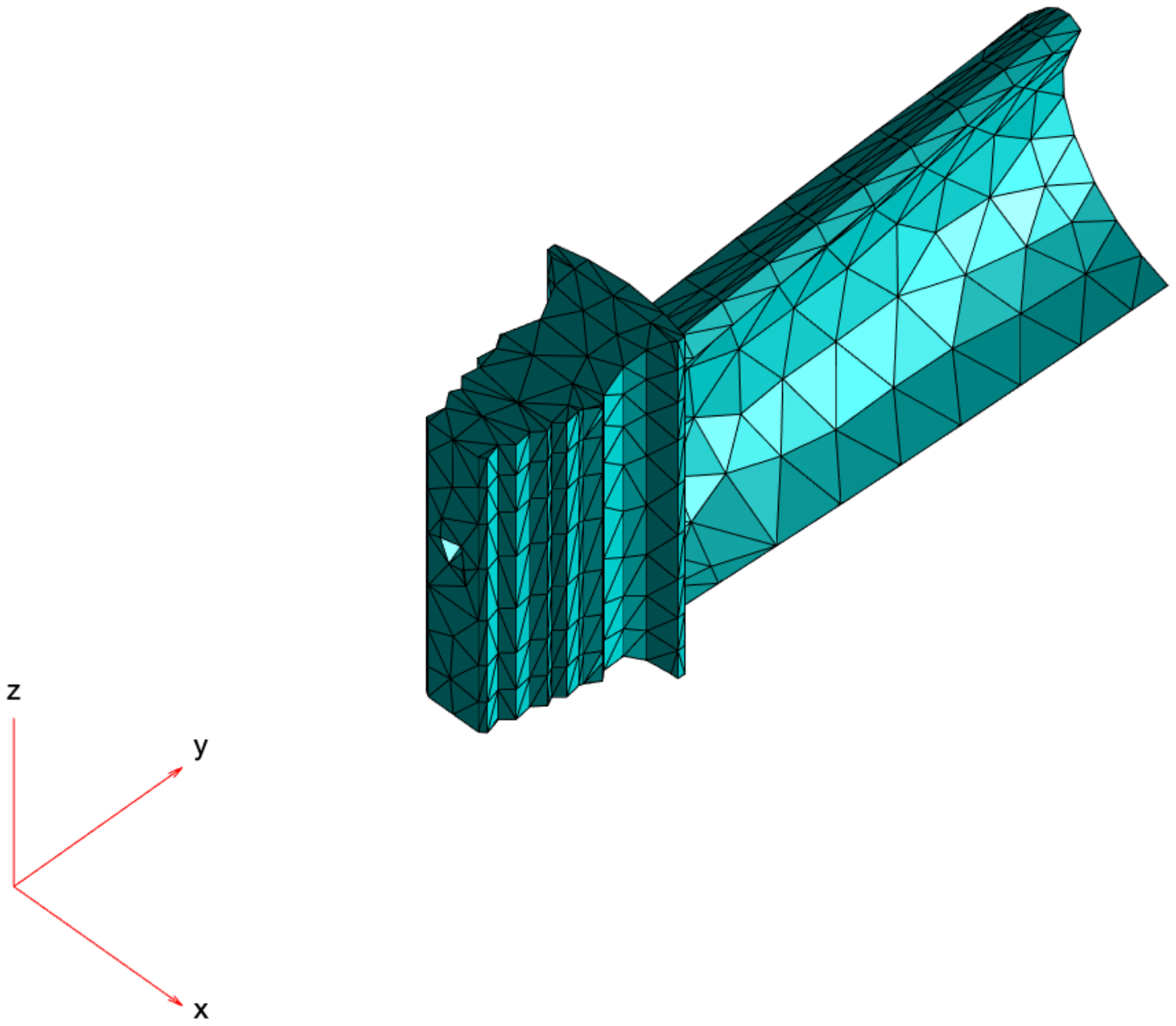}
        \caption{Medium fidelity ($s=0.5$)}
    \end{subfigure}%
    \begin{subfigure}{.34\textwidth}
        \includegraphics[width=1\linewidth]{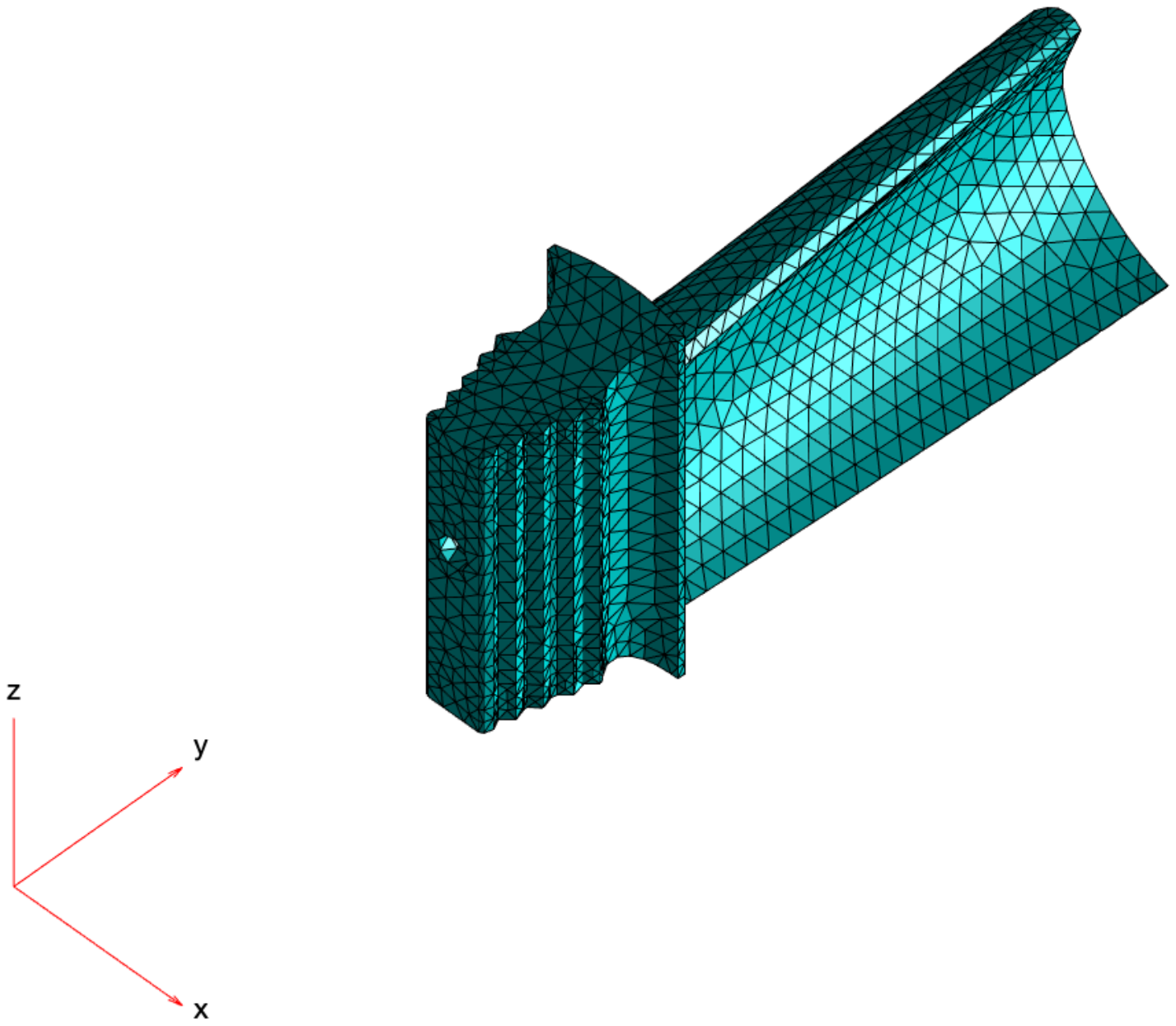}
        \caption{High fidelity ($s=1.0$)}
    \end{subfigure}%
    \caption{Mesh resolutions for the three discrete fidelities considered.}
    \label{fig:turbine_fidelities}
\end{figure}

\begin{figure}
    \centering
    \begin{subfigure}{.5\textwidth}
        \centering
        \includegraphics[width=1\linewidth]{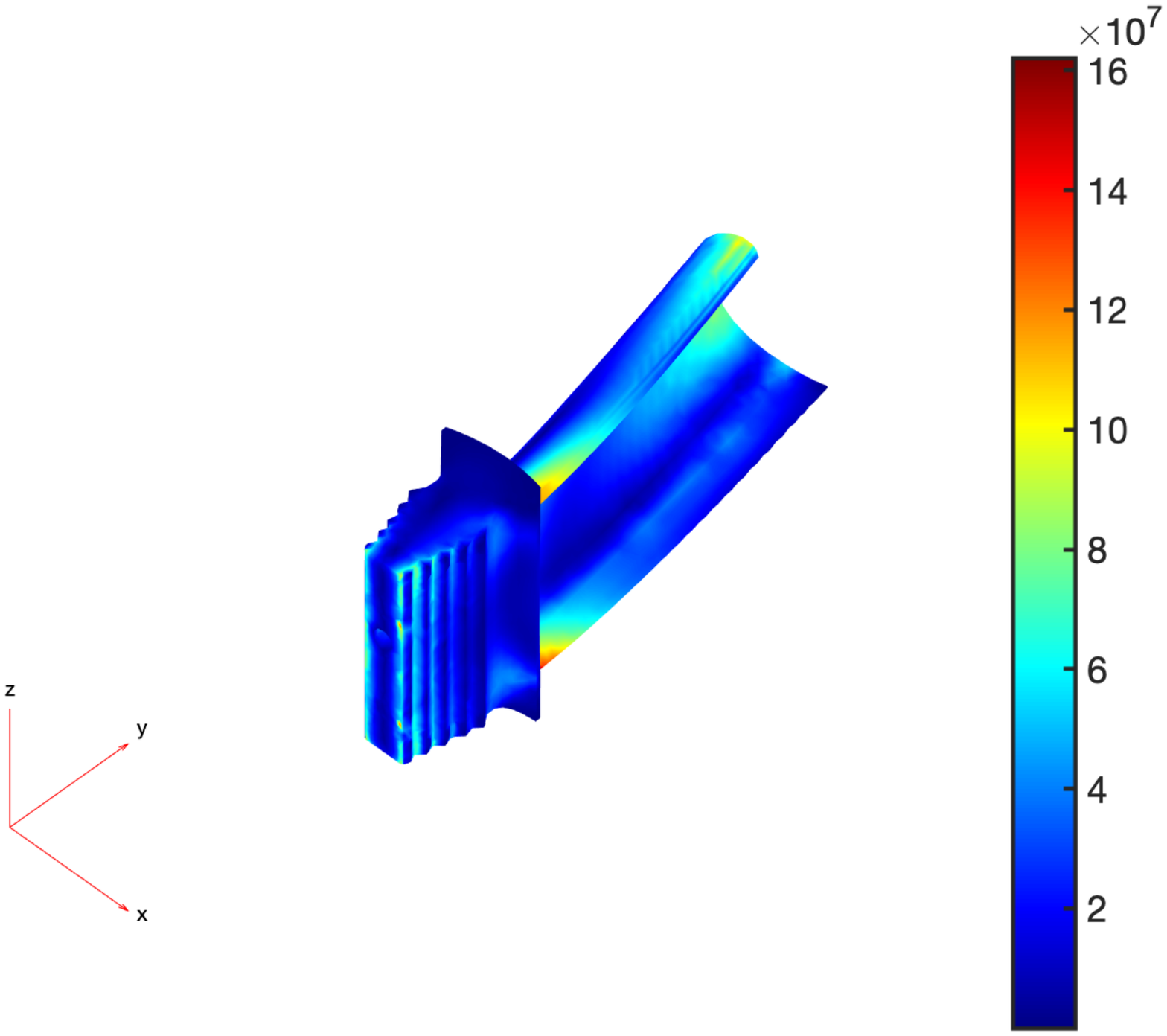}
        \caption{Solution at $\x = [7E5, 4.5E5]$}
    \end{subfigure}%
    \begin{subfigure}{.5\textwidth}
    \centering
        \includegraphics[width=1\linewidth]{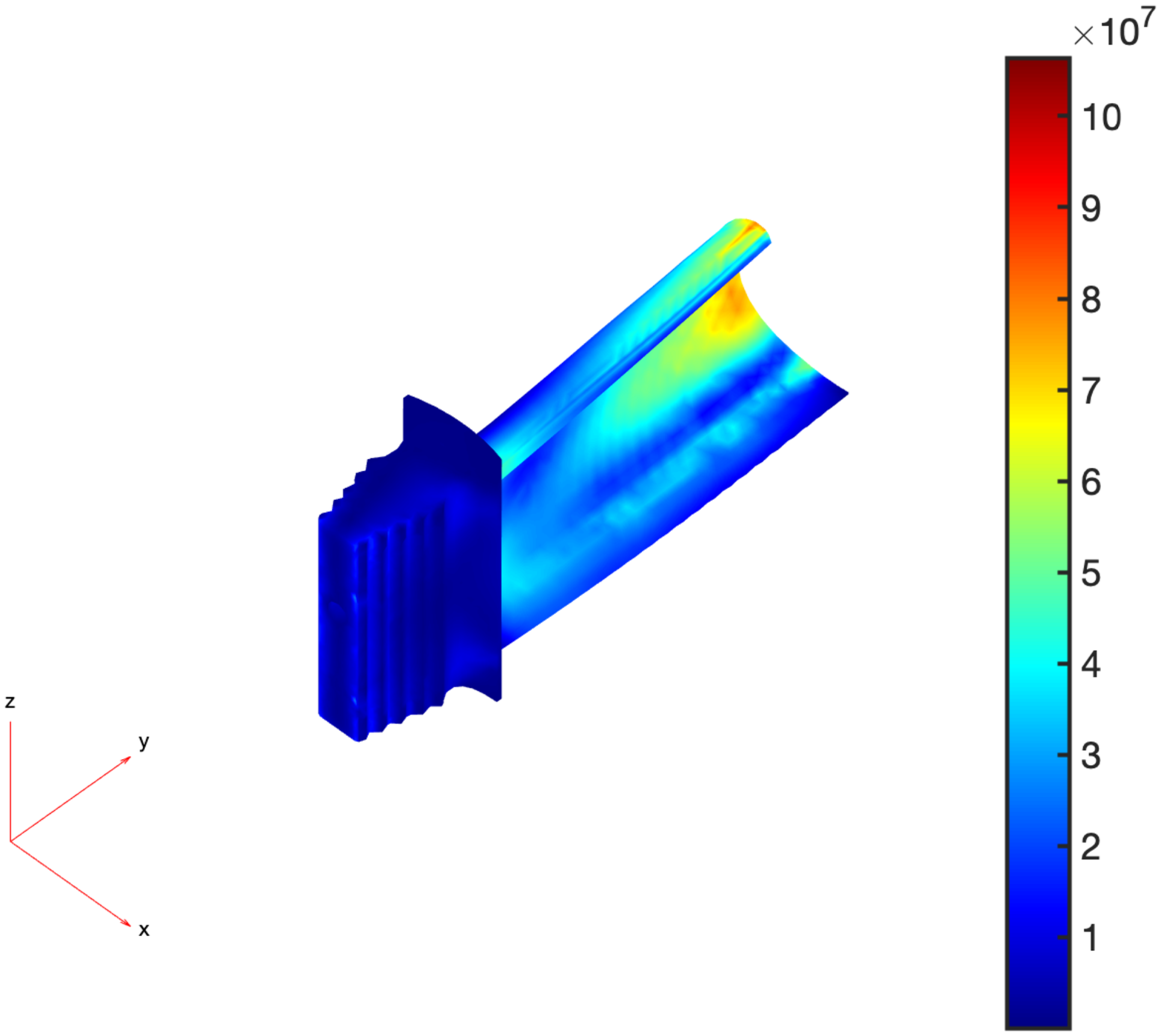}
        \caption{Solution at $\x = [5E5, 4.5E5]$}
    \end{subfigure}%
    \caption{Von Mises stress distribution and deformation on the gas turbine blade, under two different pressure loading boundary conditions.}
    \label{fig:turbine_solution}
\end{figure}

\begin{figure}
    \centering
    \includegraphics[width=1\textwidth]{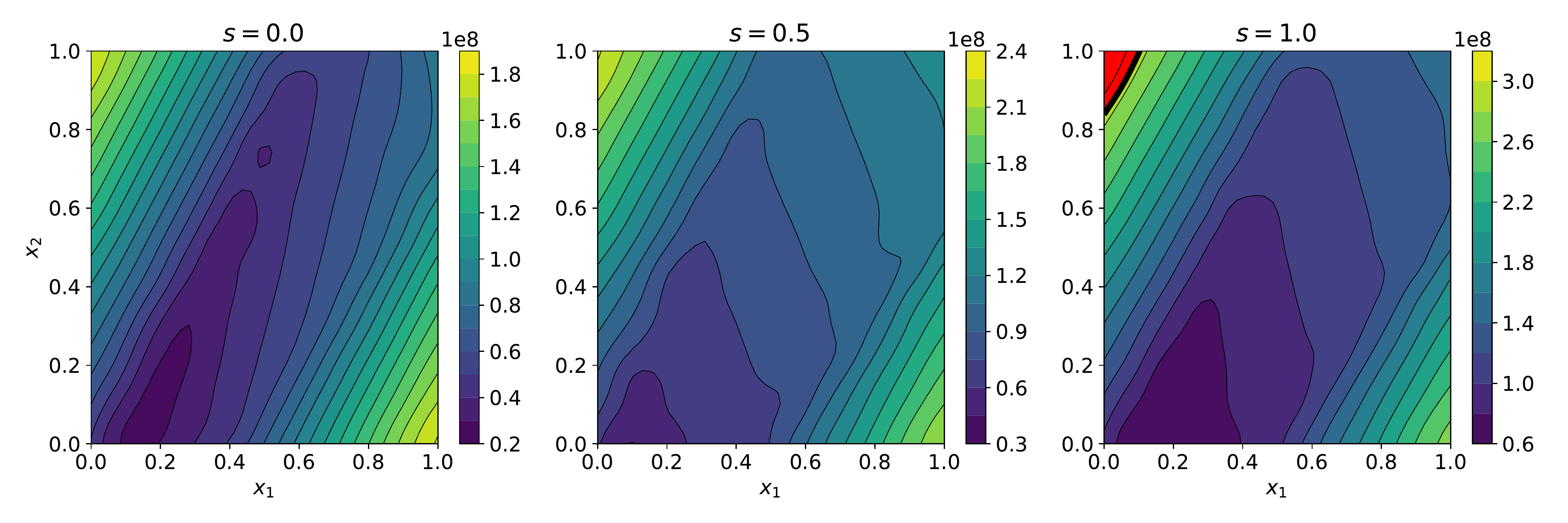}
    \caption{Contours of \emph{maximum} von Mises stress. The failure boundary (thick black line) is outside the range of the low-fidelity approximations $\hat{f}(\x, 0.0)$ and $\hat{f}(\x, 0.5)$.}
    \label{fig:turbine_mf_contours}
\end{figure}

\begin{figure}
    \centering
    \begin{subfigure}{.5\textwidth}
        \includegraphics[width=1\linewidth]{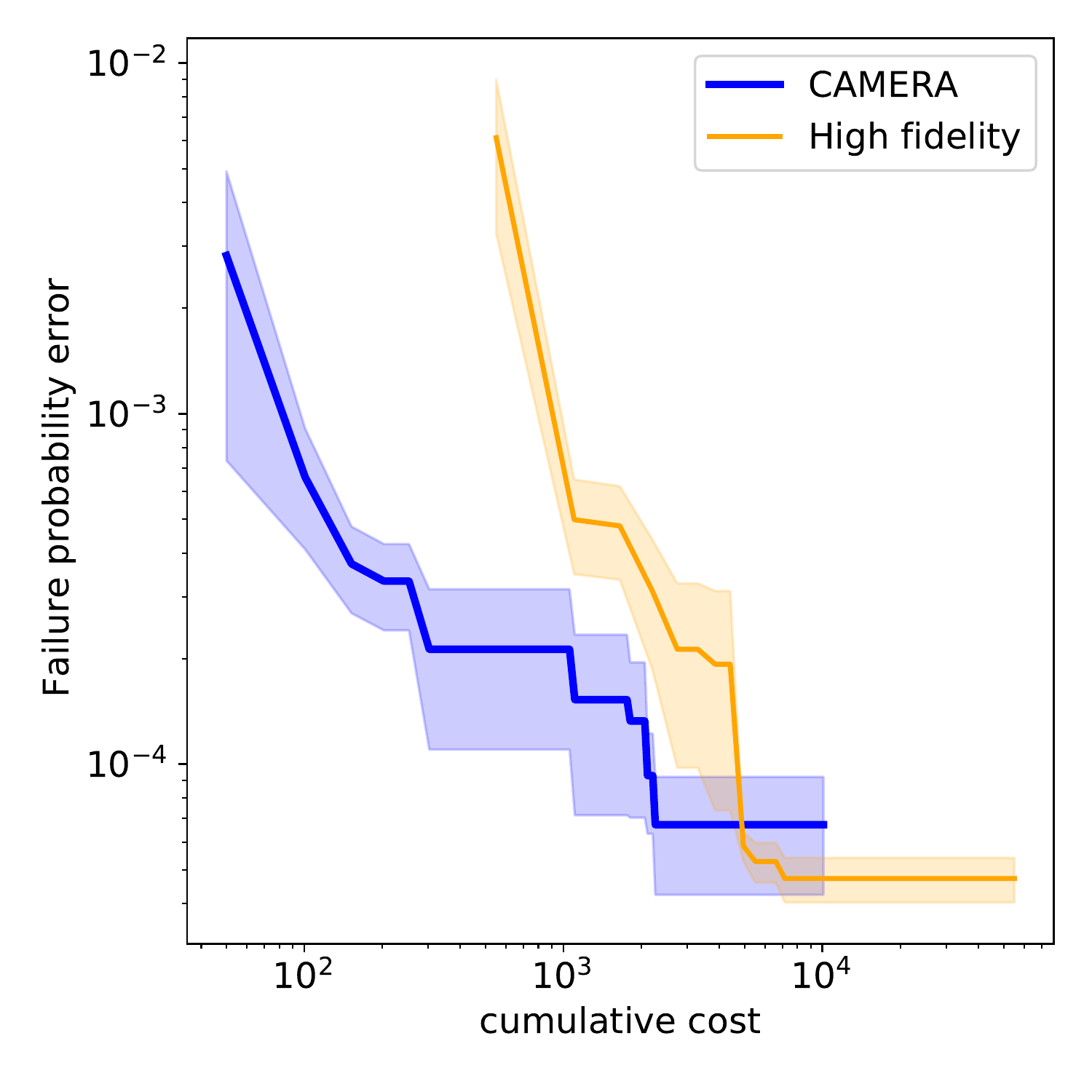}    
    \end{subfigure}%
    \begin{subfigure}{.5\textwidth}
        \includegraphics[width=1\linewidth]{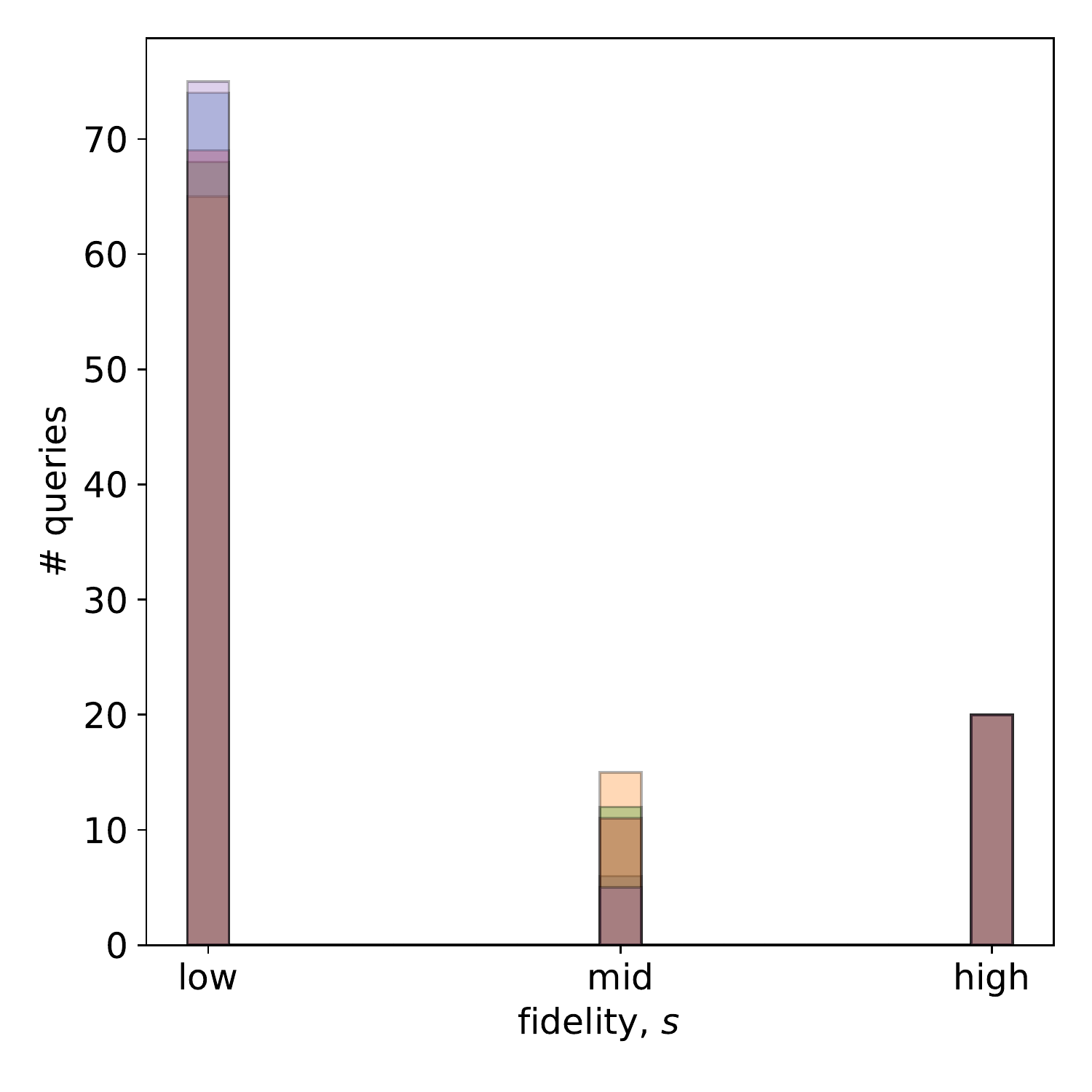}    
    \end{subfigure}    
    \caption{Results of the reliability analysis of the turbine blade. \mg{Left: failure probability error versus cumulative cost.} \mg{Right: distribution of the number of queries at each fidelity level; plot shows all 5 repetitions overlaid on top of each other.}}
    \label{fig:turbine_reliability}
\end{figure}

\subsection{ONERA transonic wing}
\label{ss:expts_onera}
The ONERA-M6 wing~\cite{schmitt1979pressure} is an analytical (shape prescribed using mathematical equations) wing created in the 1970s by the French Aerospace Laboratory for the purposes of understanding three-dimensional airflow in transonic speeds and high Reynolds numbers.\footnote{https://www.onera.fr/en/news/onera-m6-wing-star-of-cfd} It is a semi-span, swept wing with no twist,  and at transonic Mach numbers is characterized by local supersonic flow, shocks, and turbulent boundary layer separation. Therefore, this wing has been frequently used for benchmarking computational software for compressible turbulent flow~\cite{mani1997assessment}. Because of its relevance in the commercial and military aircraft flight regimes, which make it an important candidate for reliability analysis, we use it as one of the test cases in this work.

The wing, shown in \Cref{sf:onera_wingonly}, is surrounded by a fluid domain that is a quadrant of an ellipsoid, as shown in \Cref{sf:onera_domain}. The two external sides of the fluid domain are designated as ``freestream" boundary conditions, with a prescribed ambient temperature, flow speeds, and  angle of attack (that is, the incidence angle between the flight direction and the wing mean aerodynamic chord). The fluid domain is discretized with a mesh of approximately $56,000$ hexahedral elements with local refinement to resolve boundary layers, shocks, and shear layers. The three-dimensional compressible RANS equations are solved with a finite-volume method with second-order spatial discretization and Euler implicit discretization. The turbulence closure is provided by the Spalart--Allmaras~\cite{allmaras2012modifications} turbulence model. All the simulations are performed with the open-source, finite-volume-based multiphysics code SU2.~\footnote{https://su2code.github.io/}~\cite{economon2016su2}

For the reliability analysis, we consider the following input parameters:
\begin{equation*}
    \begin{split}
        x_1 :& \T{Reynolds number} \\
        x_2 :& \T{Mach number} \\
        x_3 :& \T{Angle of attack} \\
        s :& \T{Iterations to converge}.
    \end{split}
\end{equation*}
Therefore, $\x = [x_1, x_2, x_3]$, and our $f(\x)$ is the wing load factor $L/W$, where $L$ is the net aerodynamic force acting perpendicular to the flight path and $W$ is the weight of the aircraft. Note that the maximum wing load factor obtainable during flight is limited by the structural strength of the aircraft wing~\citep[Ch. 4]{anderson1999aircraft}, and hence we want to ensure the $L/W$ does not exceed a certain safety threshold, over a range of operating conditions.

The fidelity parameter $s$ is the number of iterations allowed for the solver to converge;  fewer iterations result in a faster (and hence cheaper) solution that is considered low fidelity and vice versa. We set the following bounds on the control and fidelity parameters: $x_1 \in [5\times 10^6, 11\times 10^6]$, $x_2 \in [0.7, 0.85]$, $x_3 \in [1, 3]$, and $\Sc \in [0,1]$ (which is normalized and corresponds to the number of iterations $\in [500, 5000]$). Note that we round any fractional values for the number of iterations, recommended via our algorithm, to the nearest integer. We show the pressure coefficient distribution on the wing for the different values of $s$ in \Cref{fig:onera_mf}. 
As in the turbine test case, we negate the output of the function $f$ and set the failure threshold $a = -7.5$ and therefore are interested in estimating the probability that $g(\x) = f(\x) - a \leq  0$.

We begin our algorithm with $n = 100$ seed points obtained via running the simulations at a Latin hypercube design of experiments across the joint $\X \times \Sc$ space, and we run the algorithm for 100 iterations. For the single-fidelity run, $n$ is set to keep the total cost of seed points identical to that of the multifidelity run. We keep the same cost model as in \Cref{fig:cost_model}. Note that for this experiment we compute the failure probability only once---at the conclusion of running \Cref{alg:camera} for 100 iterations. Furthermore, we do not repeat the experiment with randomized seed points. We fix $N=400$ high-fidelity samples to  evaluate $\pfis$.

\begin{figure}
    \centering
        \begin{subfigure}{.5\textwidth}
        \includegraphics[width=1\linewidth]{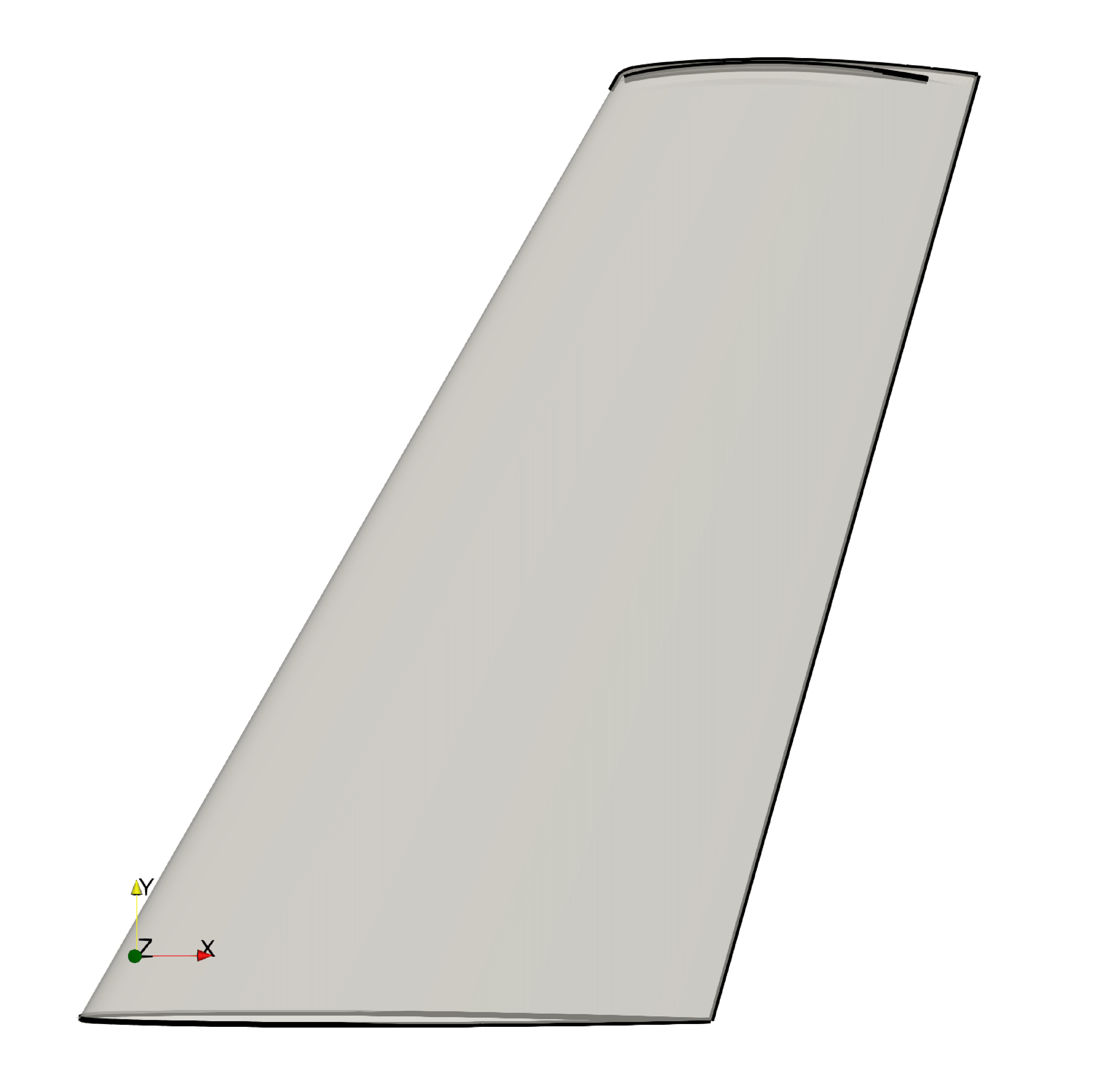}
        \caption{ONERA wing.}
        \label{sf:onera_wingonly}
    \end{subfigure}%
    \begin{subfigure}{.5\textwidth}
        \includegraphics[width=1\linewidth]{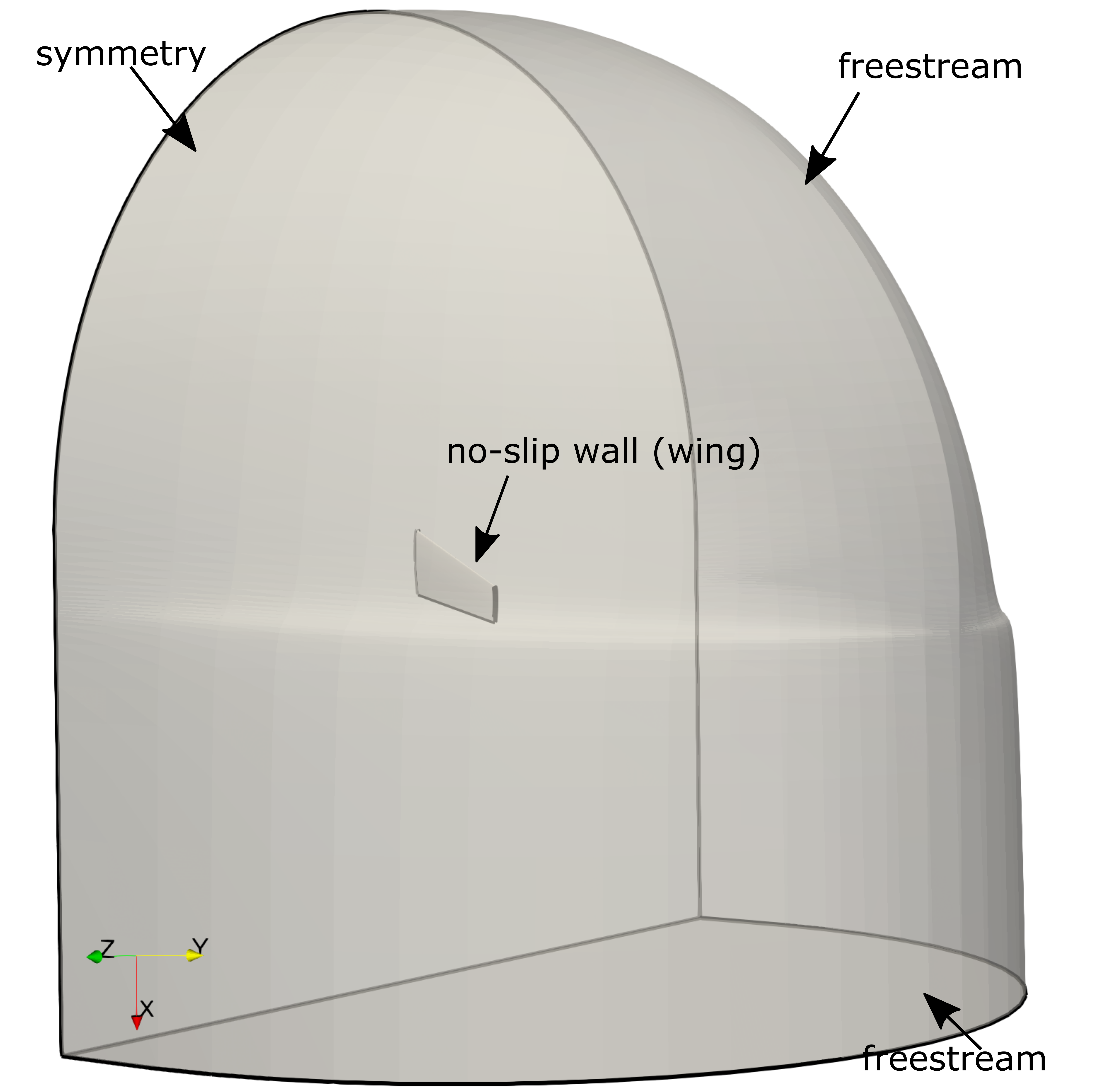}
        \caption{Fluid domain surrounding wing.}
        \label{sf:onera_domain}
    \end{subfigure}%
    \caption{Fluid domain and boundary conditions for the ONERA transonic wing.}
    \label{fig:onera}
\end{figure}

\begin{figure}
    \centering
        \begin{subfigure}{.5\textwidth}
            \centering
            \includegraphics[width=1\linewidth]{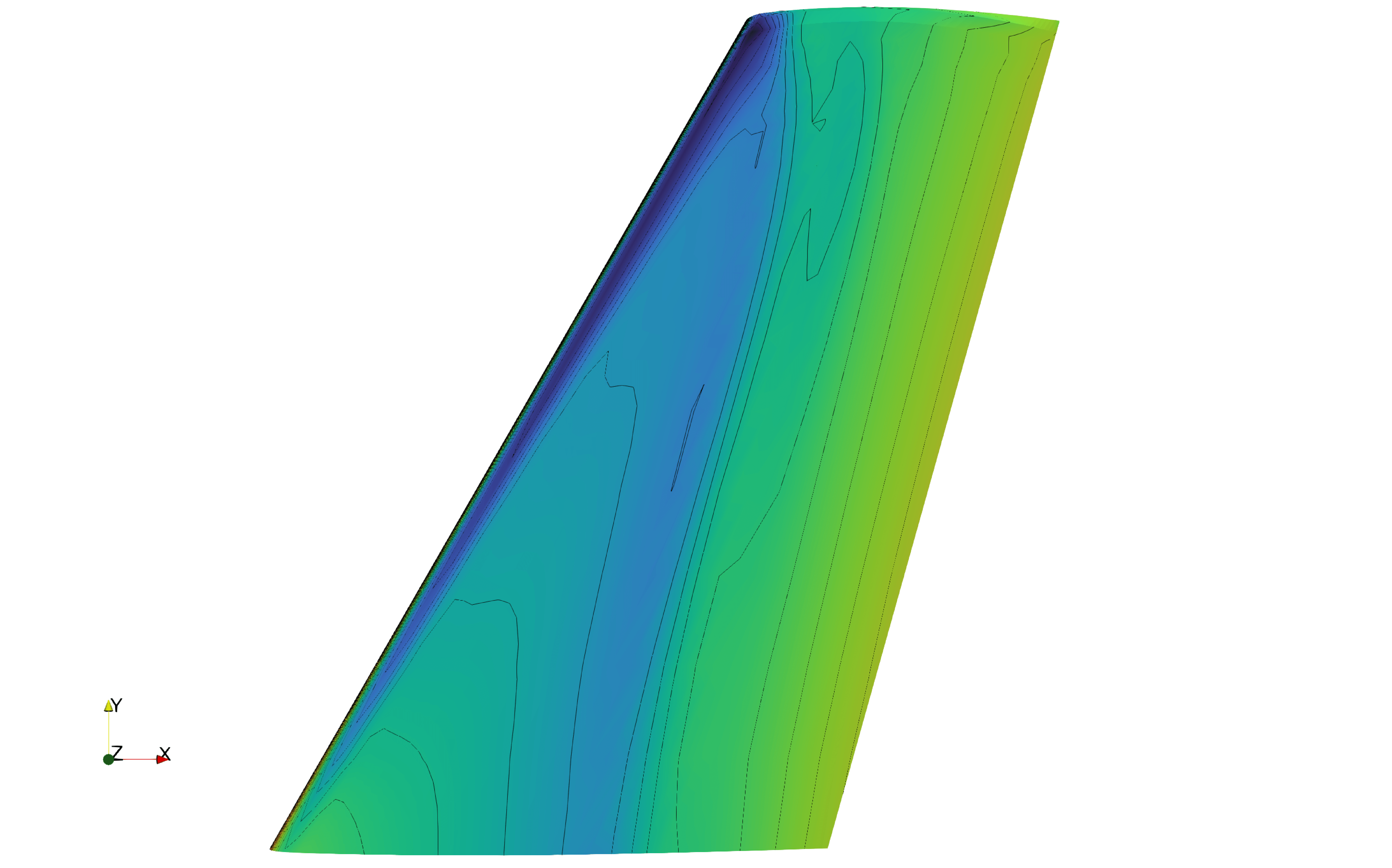}
            \caption{Unconverged (500 iterations).}
            \label{sf:onera_500}
        \end{subfigure}%
        \begin{subfigure}{.5\textwidth}
            \centering
            \includegraphics[width=1\linewidth]{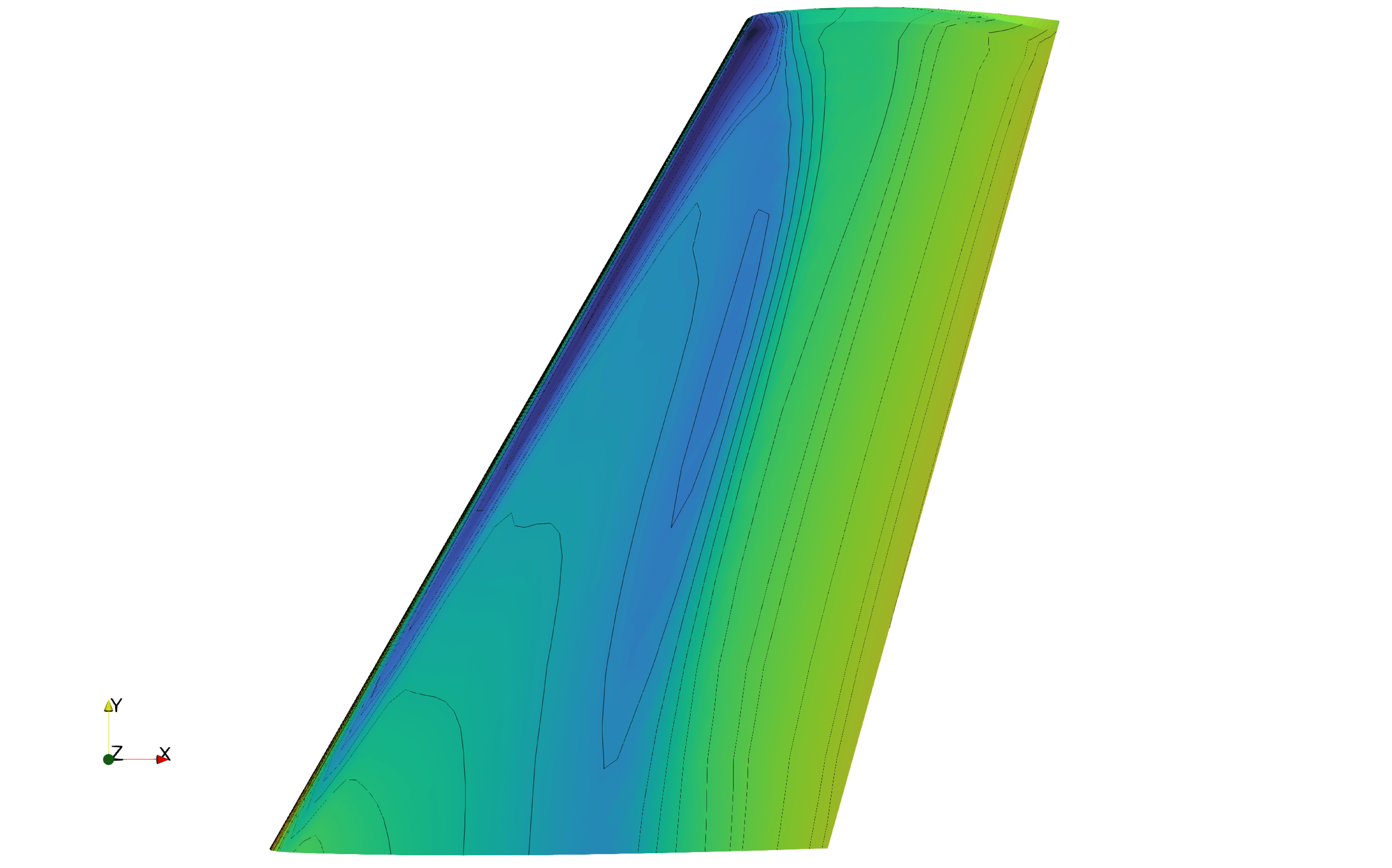}
            \caption{Unconverged (2500 iterations).}
            \label{sf:onera_2500}
        \end{subfigure} \\
        \begin{subfigure}{.5\textwidth}
            \centering
            \includegraphics[width=1\linewidth]{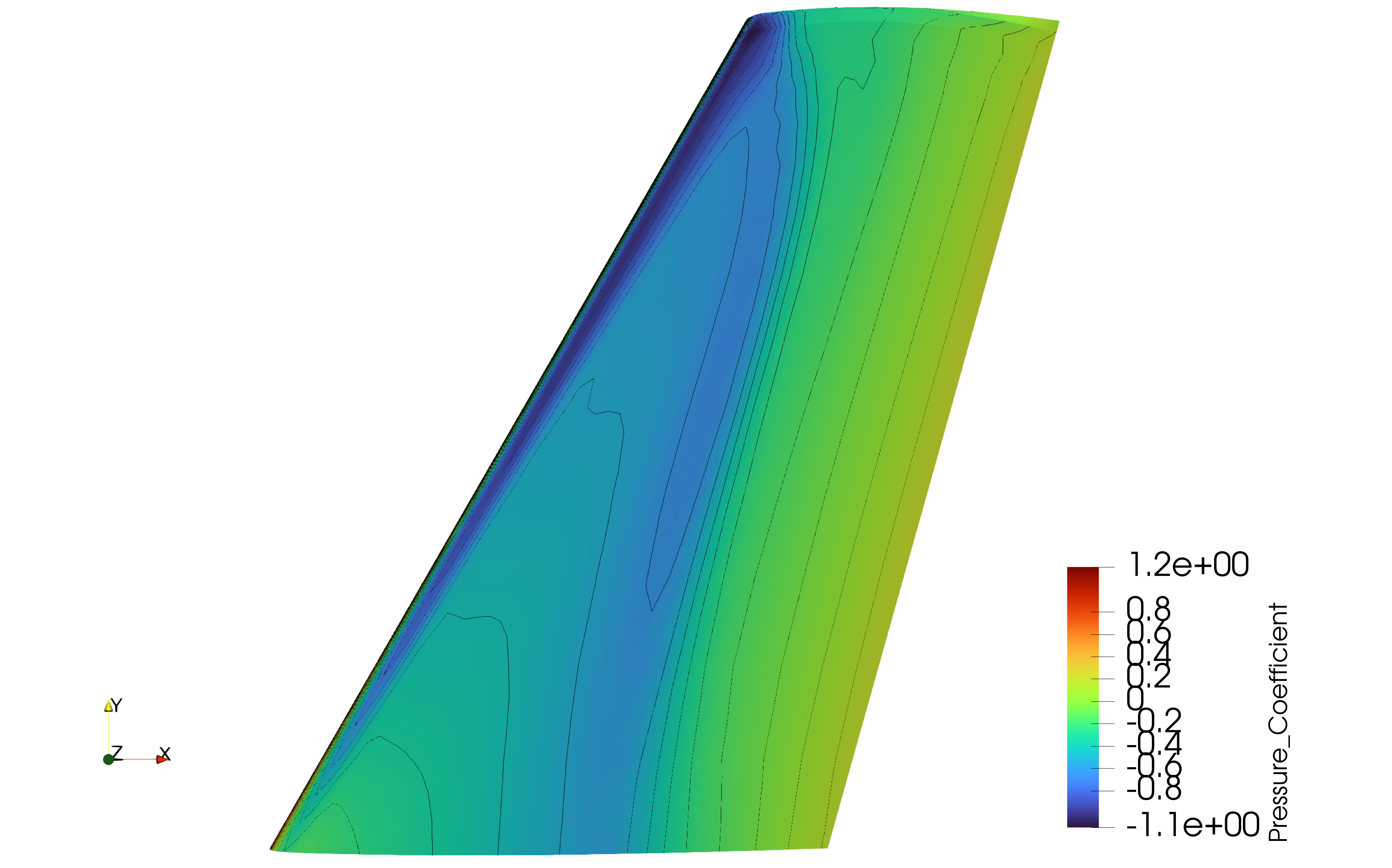}
            \caption{Converged (5000 iterations).}
            \label{sf:onera_5000}
        \end{subfigure}%
    \caption{Surface pressure coefficient distribution for the ONERA wing at 3 different levels of convergence and fixed boundary conditions.}
    \label{fig:onera_mf}
\end{figure}

We show the results in \Cref{fig:onera_reliability}, where $\pfis$ and  $\mbb{V}(\pfis)$ are used to construct the confidence intervals. While we do not know the $\pf$ (truth) for this case, we see that the predictions from $\texttt{CAMERA}$ are with smaller confidence intervals compared with using the high-fidelity model only. The overall cumulative cost of the high-fidelity run is 44,667 compared with only 38,614 for \texttt{CAMERA}, thereby establishing the cost-effectiveness of the method. 

\begin{figure}
    \centering
        \includegraphics[width=.5\linewidth]{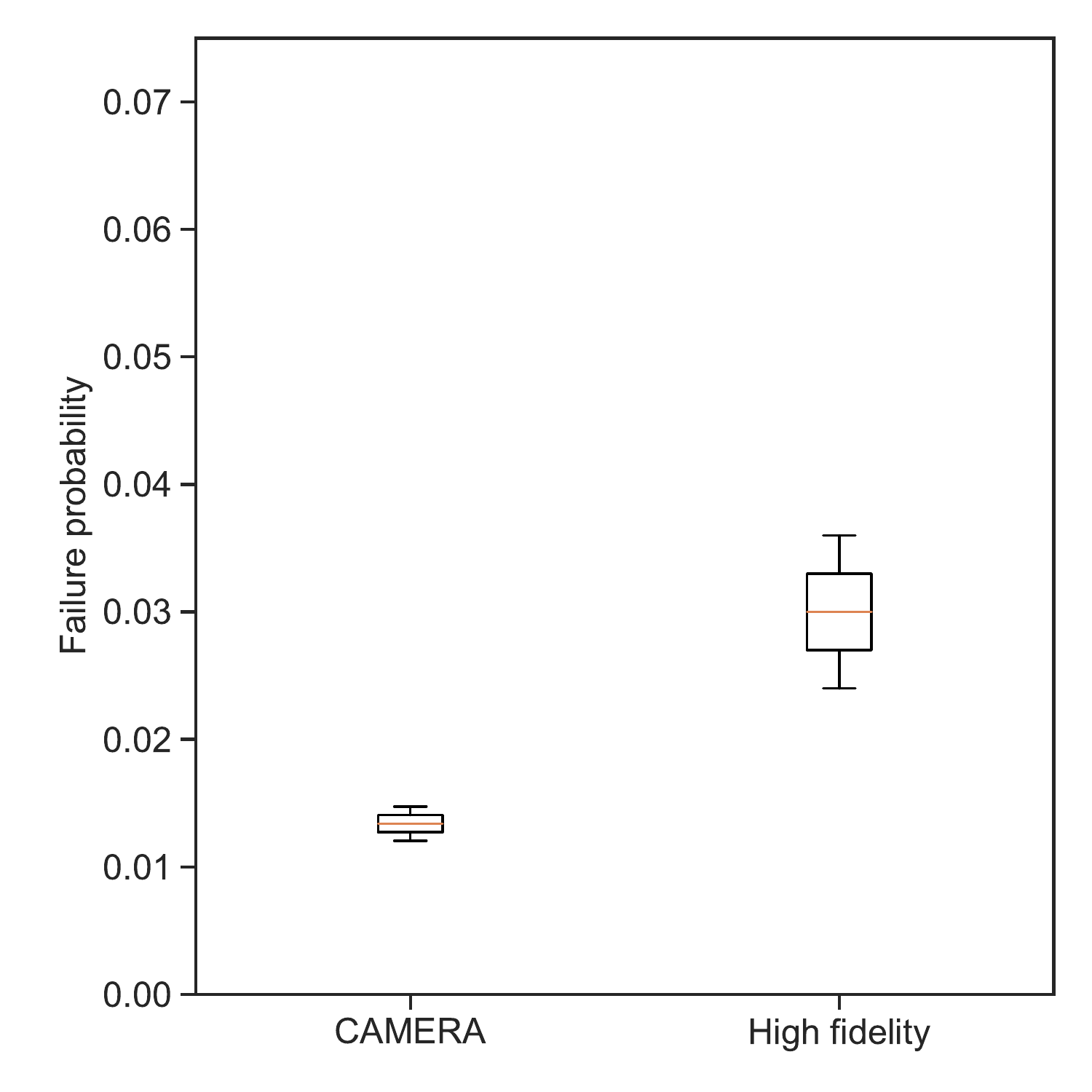}
    \caption{Predicted $\pfis$ via \texttt{CAMERA} and using the high-fidelity model only.}
    \label{fig:onera_reliability}
\end{figure}

\section{Conclusion}
\label{sec:conclusion}
In this work we address the problem of failure probability estimation for complex engineered systems with a computationally expensive model. Additionally, we account for the fact that the expensive model has tunable fidelity parameters that can trade computational cost for predictive accuracy. We propose an approach based on Gaussian process models that can learn the correlations between the different fidelities, given a continuous or discrete fidelity space, and therefore predict the high-fidelity function with potentially fewer queries to the expensive high-fidelity model that are supplemented by cheaper low-fidelity model queries. Further, we propose an acquisition-based sequential experiment design approach to adaptively build the multifidelity surrogate model, so that only the most \emph{useful} fidelities and inputs are queried. The main highlight of our proposed acquisition function is that it provides a generalized multifidelity framework that can extend existing (single-fidelity) acquisition functions to the multifidelity setting and furthermore applies to both discrete and continuous fidelity spaces. Additionally, we demonstrate that our surrogate modeling approach is asymptotically consistent, meaning that the predicted failure level set approaches the true level set with high probability as the number of iterations approaches infinity. We also show that estimates of the failure probability, predicted  by using only the developed multifidelity surrogate model, produces very high accuracy at a lower computational cost, compared with using the high-fidelity model only. \mg{Furthermore, using the multifidelity surrogate model with a single batch of (non-adaptive) Latin hypercube design results in poor estimates of the failure probability, compared to the proposed adaptive approach. This confirms the need for adaptive training of surrogate models for failure probability estimation.}

With the sequential surrogate model building approach, we observe that the predicted failure level sets are very close to the true level set. Furthermore, we observe that in the presence of multiple fidelity models, our approach performs even better by requiring only $\approx 50\%$ of the computational budget. In terms of predicting the failure probability, our approach shows a decrease in the prediction error with increasing evaluations of the models, while the multifidelity approach always outperforms the single-fidelity case. In addition to the synthetic cases, we  demonstrate our method on a real-world turbine reliability case, with discrete fidelities, where the failure probability is $\approx 0.4\%$, and a transonic wing test case. We observe results consistent with the synthetic cases where we are able to obtain very high accuracy in failure probability error, with the multifidelity approach significantly outperforming using the high-fidelity model only. Overall, our experiments reveal that our approach can lead to cost-effective yet accurate predictions for reliability analysis, for example, using only $N=400$ high-fidelity samples for estimating $\pfis$ in the transonic wing example.

We anticipate  conducting follow-up studies of this work in several directions. First, the \texttt{CAMERA} algorithm can be modified to adaptively provide an estimate for $\pfis$; currently, it is estimated at the conclusion of the adaptive surrogate model construction. This is useful for real-world applications, where running expensive high-fidelity simulations in multiple batches is less risky and therefore more practical. Along the same lines, we  plan on evaluating the performance of our sequential experiment design of  adaptive surrogate model construction, in making batch evaluations at every step. Furthermore, whereas we have demonstrated our approach in estimating $\pf$ in $\mcl{O}(10^{-4})$, it would be interesting to evaluate the approach on problems with  smaller orders of magnitude in $\pf$. 

\appendix
\section{Impact of the choice of cost model}
We vary the choice of the constant $c_1$ in the cost model to observe its impact on the algorithm performance. Recall that $c_1$ determines the rate of change of the computational cost of the models with respect to fidelity. In this regard, smaller values of $c_1$ result in ``flatter" cost models, as shown in \cref{sf:cost_models}. We use the Ishigami test function to evaluate the impact of the choice of cost model on the performance of \texttt{CAMERA}, whose results, with 20 repetitions with randomized seed points, are shown in \cref{sf:ishigami_cost}. We observe  no appreciable difference in the algorithm performance  between a choice of $c_1=10$ and $c_1=5$. At $c_1=1.0$, \texttt{CAMERA} still outperforms the high-fidelity-only run, although suffering a higher failure probability error at lower cumulative costs. As $c_1$ approaches $0$, the performance approaches that of the high-fidelity-only run, as expected. Therefore, we conclude that the choice of the cost model, while expected to impact the performance of the algorithm, does not necessarily offer an undue advantage to the algorithm. This experiment justifies the use of $c_1 = 10.0$ in all of our experiments.

\begin{figure}[h!]
    \centering
        \begin{subfigure}{.5\textwidth}
        \includegraphics[width=1\linewidth]{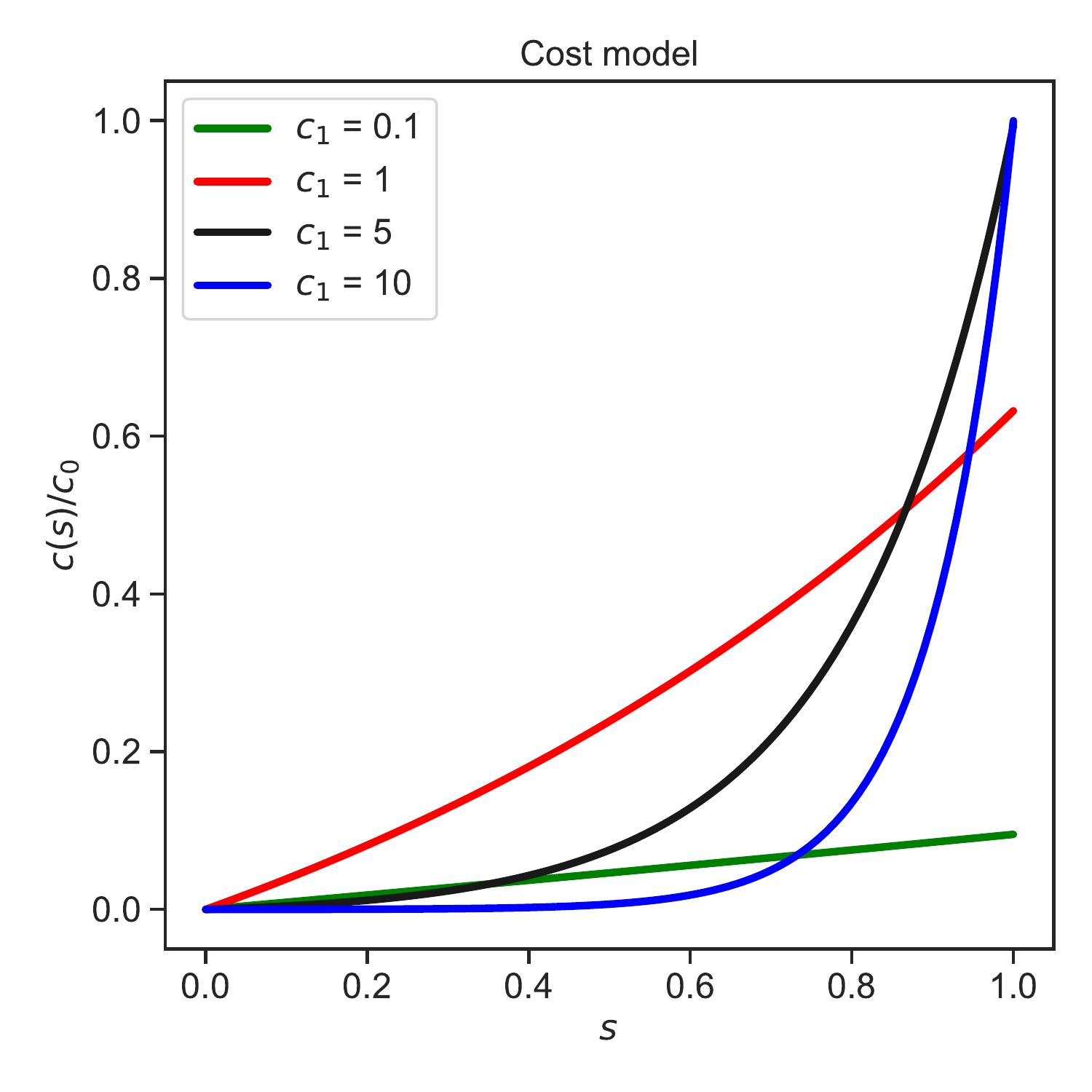}
        \caption{Cost model with varying $c_1$.}
        \label{sf:cost_models}
    \end{subfigure}%
    \begin{subfigure}{.5\textwidth}
        \includegraphics[width=1\linewidth]{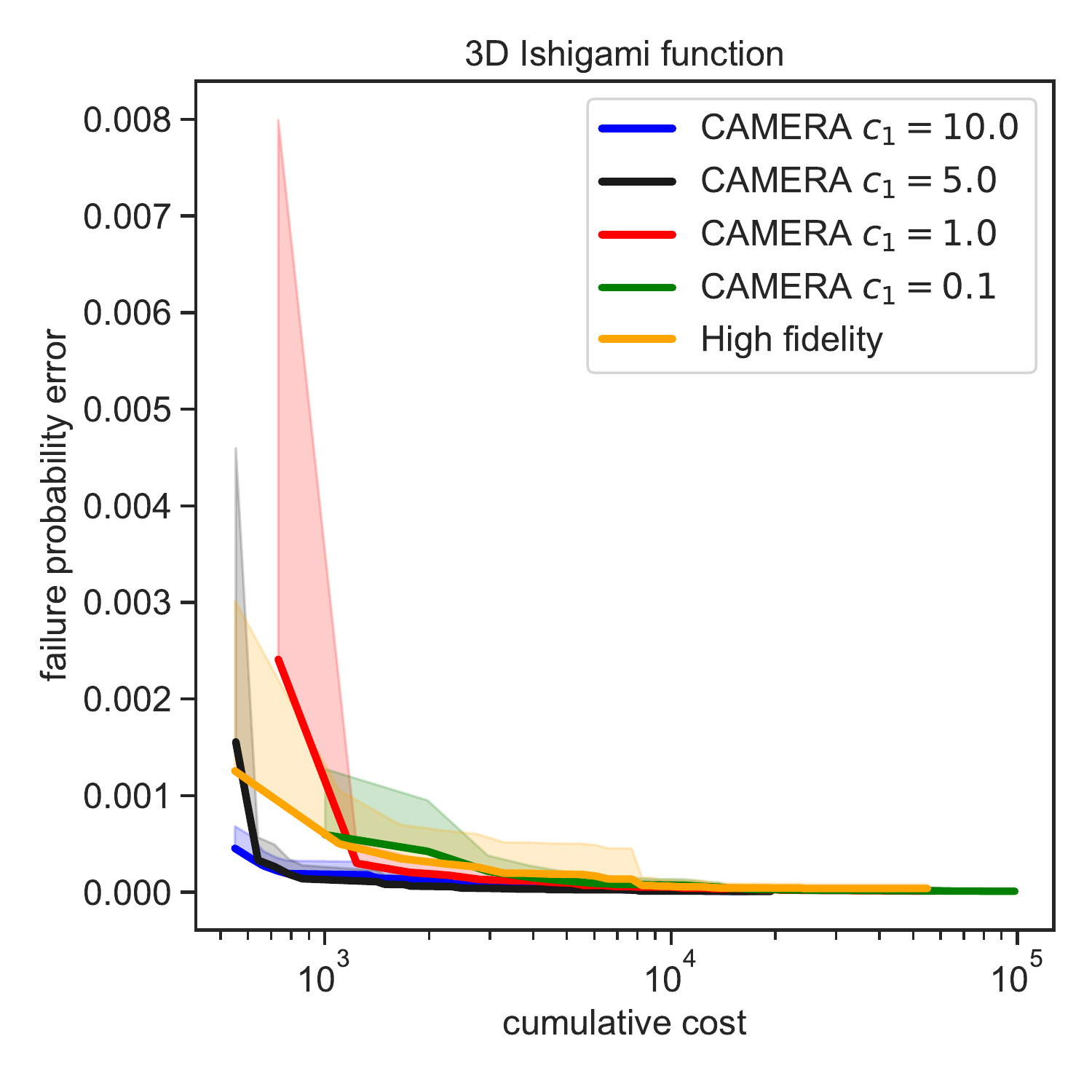}
        \caption{Ishigami test case.}
        \label{sf:ishigami_cost}
    \end{subfigure}%
    \caption{Impact of the choice of cost model on the algorithm performance.}
    \label{fig:ishigami_cost_models}
\end{figure}

\section{Expected improvement acquisition function of \cite{ranjan2008sequential}}
The closed-form expression for the expected improvement acquisition function by \citet{ranjan2008sequential}, obtained by setting $\delta ^2(\x, s) - \min\{(y(\x, s) - a)^2, \delta^2(\x, s)\}$ and $\bw = \{a, \eta\}$ in \eqref{e:value_function}, is given by
\begin{equation}
\begin{split}
    EI_r(\x, s) = & \left[\delta^2(\x,s) - (\mu(\x,s) - a)^2 - \sigma^2(\x,s)\right] \left(\Phi(z^+) - \Phi(z^-)\right) + \\ &\sigma(\x,s)^2 \left(z^- \phi(z^-) - z^+ \phi(z^+)\right) \\
    & 2\left(\mu(\x,s)-a \right)\sigma(\x,s)\left(\phi(z^-) - \phi(z^+)\right).
\end{split}
\label{e:Ranjan}
\end{equation}

\section*{Acknowledgments}
This work is partially supported by the Laboratory Directed Research and Development (LDRD) funding from Argonne National Laboratory, provided by the Director, Office of Science, of the U.S. Department of Energy under contract DE-AC02-06CH11357, and by the Faculty Startup Funds at the University of Utah. We thank Carlo Graziani from Argonne National Laboratory for his valuable feedback.

\bibliographystyle{plainnat}
\bibliography{sample, ref}








\end{document}